\def\R{\mathbb{R}}
\def\P{\mathbb{P}}
\def\calE{\mathcal{E}}
\def\calF{\mathcal{F}}
\def\calN{\mathcal{N}}
\def\calH{\mathcal{H}}
\def\calS{\mathcal{S}}
\def\calX{\mathcal{X}}
\def\spaceX{\mathbb{X}}
\def\spaceY{\mathbb{Y}}
\def\mathspan{\mathrm{span}}
\def\Gbar{ {\overline{G}} }
\def\Abar{ {\overline{A}} }
\def\bbar{ {\overline{b}} }
\def\LGbar{ {\mathcal{L}_{\overline{G}}}  }
\def\btheta{\boldsymbol{\theta}}
\def\calA{\mathcal{A}}
\newcommand{\E}{\mathbb{E}}
\newcommand{\Ebracket}[1]{\mathbb{E}\left[{#1}\right]}
\newcommand{\innerp}[1]{\langle{#1}\rangle}
\newcommand{\kl}[1]{\mathrm{KL}\left(#1\right)}
\newcommand{\tr}{\mathrm{Tr}}
\newcommand{\var}{\mathrm{Var}}
\definecolor{mygrey}{gray}{0.75}
\newcommand{\RNum}[1]{\uppercase\expandafter{\romannumeral #1\relax}}
\newtheorem{theorem}{Theorem}
\newtheorem{assumption}[theorem]{Assumption}
\newtheorem{corollary}[theorem]{Corollary}
\newtheorem{definition}[theorem]{Definition}
\newtheorem{example}[theorem]{Example}
\newtheorem{lemma}[theorem]{Lemma}
\newtheorem{proposition}[theorem]{Proposition}
\newtheorem{remark}[theorem]{Remark}
\newenvironment{proof}[1][Proof]{\noindent\textbf{#1.} }{\ \rule{0.5em}{0.5em}}
\numberwithin{equation}{section}
\numberwithin{theorem}{section}
\title{Minimax rates for learning kernels in operators} 
\author[1]{Sichong Zhang}
\affil[1]{Department of Applied Mathematics and Statistics, Johns Hopkins University, Baltimore, USA. 
}
\author[2]{Xiong Wang}
\author[2]{Fei Lu} 
\affil[2]{Department of Mathematics, Johns Hopkins University, Baltimore, USA. 
}
\date{}
\begin{document}
\maketitle
\vspace{-6mm}
Learning kernels in operators from data lies at the intersection of inverse problems and statistical learning, providing a powerful framework for capturing non-local dependencies in function spaces and high-dimensional settings. In contrast to classical nonparametric regression, where the inverse problem is well-posed, kernel estimation involves a compact normal operator and an ill-posed deconvolution. To address these challenges, we introduce adaptive spectral Sobolev spaces, which unify Sobolev spaces and reproducing kernel Hilbert spaces, automatically discarding non-identifiable components and controlling terms with small eigenvalues. Within this framework, we establish the minimax convergence rates for the mean squared error under both polynomial and exponential spectral decay regimes. 
 Methodologically, we develop a tamed least squares estimator achieving the minimax upper rates via controlling the left-tail probability for eigenvalues of the random normal matrix; and for the minimax lower rates, we resolve challenges from infinite-dimensional measures through their projections.

\setcounter{tocdepth}{1}
 \tableofcontents

\section{Introduction}
 Kernels are effective in capturing nonlocal dependency, making them indispensable for designing operators between function spaces or tackling high-dimensional problems. Thus, the problem of learning kernels in operators arises in diverse applications, from identifying nonlocal operators in partial differential equations in \cite{bucur2016_NonlocalDiffusion,pang2020_NPINNsNonlocal,LangLu22,you2022data,you2024nonlocal,yu2024nonlocal} to image and signal processing in \cite{caflisch1981inverse,gilboa2009nonlocal,lou2010image}.

In such settings, one seeks to recover a kernel function $\phi$ in the forward operator $R_\phi: \spaceX\to\spaceY $ from noisy data of random input-output pairs $\{(u^{m},f^{m} )\}_{m=1}^{M}$, where
\begin{equation}\label{eq:R_phi_g}
f(x) = R_\phi[u](x) + \varepsilon(x)\text{ and } R_\phi[u](x) = \int_{\calS} \phi(s) g[u](x,s)ds, \quad x\in \calX.	
\end{equation}
Here, $\spaceX$ and $\spaceY$ are problem-specific function spaces, the functional $g$ is given, the set $\calX$ can be either a finite set or a domain in $\R^d$, $\calS\subset\R^d$ is a compact set, and $\varepsilon$ is observation noise that can be non-Gaussian; see Section \ref{sec:abstract_settings} for detailed model settings. In particular, the equation is interpreted in the weak sense when $\spaceY$ is infinite-dimensional.

The operator $R_\phi[u]$ can be nonlinear in $u$, but it is linear in the kernel $\phi$, and the output depends on $\phi$ nonlocally. Examples include integral operators with $g[u](x,s)= u(x-s)$ that is ubiquitous in science and engineering, the nonlocal operators with $g[u](x,s)= u(x+s)+u(x-s)- 2u(x)$ in nonlocal diffusion models \cite{you2024nonlocal,du2012_AnalysisApproximation}, and the aggregation operators with $g[u](x,s) = \partial_x[ u(x+s)u(x)] - \partial_x[ u(x-s)u(x)]$ in mean-field equations \cite{carrillo2019aggregation,LangLu22}; see Examples \ref{example:int_opt}--\ref{example:Aggr_opt} for details.

The problem of recovering the kernel $\phi$ from given data is at the intersection of statistical learning and inverse problems. In essence, it is a deconvolution from multiple function-valued input-output data pairs. The deconvolution renders it a severely ill-posed inverse problem, while the randomness of the data endows the problem with a statistical learning flavor. Thus, it is close to functional linear regression (FLR) \cite{hall2007methodology, yuan2010reproducing, wahba1990spline}, inverse statistical learning (ISL) \cite{BM18}, nonparametric regression \cite{Gyorfi06a, CuckerSmale02}, and classical inverse problem of solving the Fredholm equations of the first kind \cite{engl1996regularization, hansen1998rank}.

A fundamental question is the minimax convergence rate as the number of independent input-output pairs grows. In particular, it is crucial to understand how the severe ill-posedness inherent in the deconvolution affects the minimax rate and to clarify the connections between statistical learning and inverse problems.  

Building on the above pioneering work,  this study addresses the above question by establishing the minimax convergence rates for the ill-posed settings of polynomial and exponential spectral decays. We prove the minimax rates in a framework based on adaptive \emph{spectral Sobolev spaces} that connects inverse problems and statistical learning. This framework is rooted in the observation that the large-sample limit of statistical learning is a deterministic inverse problem, where the associated normal operator plays a key role. In classical nonparametric regression, the normal operator is the identity operator, whereas in learning kernels in operators, as well as in FLR and ISL, it is a compact operator. By exploiting the spectral decay of this normal operator, we construct adaptive spectral Sobolev spaces that discard the non-identifiable components in the null space of the normal operator. In particular, these Sobolev spaces include the reproducing kernel Hilbert spaces (RKHS) of the normal operator. Thus, this approach unifies Sobolev spaces and RKHS, thereby providing a robust theoretical foundation for statistical learning in ill-posed settings.

\subsection{Main results}

Our main result is the minimax convergence rate for estimating the kernel $\phi$ as the number of samples $M$ increases. With the default function space of learning being $L^2_\rho:= L^2(\calS,\mathcal{B}(\calS),\rho)$, we quantify the smoothness of the kernel by adaptive \emph{spectral Sobolev spaces}, 
$$H_\rho^\beta = \LGbar^{\beta/2}(L^2_\rho), \quad \beta\geq 0, $$
 where $\LGbar:L^2_\rho\to L^2_\rho$ is the normal operator of regression defined by $
\innerp{\LGbar\phi,\phi}_{L^2_\rho} = \E[ \| R_\phi[u]\|_\spaceY^2 ]
$
for all $ \phi\in L^2_\rho$. These Sobolev spaces are adaptive to the distribution of $u$ and the forward operator $R_\phi[u]$, and they automatically discard the non-identifiable components in the null space of the normal operator. In particular, the space $H^\beta_\rho$ with $\beta=1$ is the RKHS associated with the normal operator's integral kernel $\Gbar$. These spaces are unifying generalizations of the source sets in inverse problems (see, e.g., \cite[Eq.(3.29)]{engl1996regularization} and \cite[Eq.(2.5)]{BM18}), the periodic Sobolev spaces in spline regression (see, e.g., \cite[Chapter 2]{wahba1990spline}), and the RKHSs in functional linear regression in \cite{hall2007methodology,yuan2010reproducing,balasubramanian22unified}.

We establish the minimax convergence rates when the normal operators have either polynomial or exponential spectral decay. Let $\beta>0$ and denote $\phi_*$ the true kernel. 
\begin{itemize}
    \item When the spectral decay is polynomial, i.e., $\lambda_n\asymp n^{-2r}$ with $r> 1/4$, the minimax rate is  
    \[
       \inf_{\widehat \phi_M}\sup_{\phi_*\in H^\beta_\rho(L)}   \E_{\phi_*}\big[\|\widehat{\phi}_M-\phi_*\|_{L^2_\rho}^2\big] \asymp M^{-\frac{2\beta r}{2\beta r+2r+1}},
    \]
    where the infimum $\inf_{\widehat \phi_M}$ runs over all estimators $\widehat \phi_M$ using data $\{(u^{m},f^{m} )\}_{m=1}^{M}$.
      \item When the spectral decay is exponential, i.e., $\lambda_n\asymp e^{-rn}$ with $r> 0$, the minimax rate is 
    \[
       \inf_{\widehat \phi_M}\sup_{\phi_*\in H^\beta_\rho(L)}   \E_{\phi_*}\big[\|\widehat{\phi}_M-\phi_*\|_{L^2_\rho}^2\big] \asymp M^{-\frac{\beta}{\beta+1}}.
    \]
\end{itemize}
When the spectral decay is polynomial, the above minimax rate aligns with those reported in functional linear regression in \cite{hall2007methodology,yuan2010reproducing} and inverse statistical learning in \cite{BM18,Helin2024}. Although these studies employ different settings and methods, their minimax rates coincide because, in each case, the inverse problem in the large sample limit involves a compact normal operator (see Section \ref{sec:relatedwork} for detailed comparisons). In contrast, when the spectral decay is exponential, to the best of our knowledge,  our work is the first to establish an optimal minimax rate. Remarkably, this rate is independent of the decay speed exponent $r$ and depends solely on the smoothness exponent $\beta$, a result enabled by the spectral Sobolev space.

\vspace{2mm}\noindent\textbf{Main contributions.} 
The primary contribution of this study is to establish optimal minimax rates for learning operator kernels within a unifying framework that bridges inverse problems and statistical learning via adaptive spectral Sobolev spaces.

A major methodological contribution of this study is the introduction of a \emph{tamed least squares estimator} (tLSE) that achieves the minimax upper rate for ill-posed statistical learning problems. Unlike previous work \cite{hall2007methodology,yuan2010reproducing,BM18} that focused on RKHS-regularized estimators, we establish the minimax upper rate using a tLSE, which mitigates the impact of small eigenvalues through a cutoff strategy. Originally introduced in \cite{wang2023optimal} for learning interaction kernels in interacting particle systems, where the inverse problem is well-posed in the large-sample limit, the tLSE framework is extended in this paper to address ill-posed settings. In particular, our approach features two key technical innovations: (i) a tight bound for the sampling error derived via singular value decomposition, and (ii) a relaxed PAC-Bayesian inequality to bound the left-tail probability of the eigenvalues of the random normal matrix under a mild fourth-moment condition.

Furthermore, this study introduces technical innovations to address infinite-dimensional (non-Gaussian) noises when establishing minimax lower rates. We derive the minimax lower rate using Assouad's method \cite{assouad1983deux,yu97Assouad,hall2007methodology}, reducing the estimation problem to hypothesis testing on a hypercube through binary coefficients in the eigenfunction expansion. Crucially, to handle distribution-valued noise in the infinite-dimensional output space, we control the total variation distance via the Kullback-Leibler divergence between restricted measures on filtrations, employing the monotone class theorem as detailed in Section \ref{sec:tv_meas_inftyD}.

\subsection{Related work}\label{sec:relatedwork}
The learning of kernels in operators is closely related to inverse problems and their statistical variants, functional linear regression, and classical nonparametric regression.

\vspace{2mm}\noindent\textbf{Functional linear regression.} Minimax rates are well-established for functional linear regression in \cite{hall2007methodology,yuan2010reproducing,balasubramanian22unified}, where the task is to estimate the slope function $\phi$ and the inception $\alpha$ in the model 
$	Y_i = \alpha + \int \phi(s)X_i(s)ds + \varepsilon_i
$ 
from data $\{(X_i, Y_i)\}_{i=1}^M$, where $\{\varepsilon_i\}$ are i.i.d.~$\R$-valued noise. 
 The minimax-optimal estimators are typically constructed via RKHS regularization with user-selected RKHSs, under the assumption that the covariance operator (equivalent to our normal operator) is strictly positive definite. Our work extends the setting from scalar-on-function to function-on-function regression. We quantify the smoothness of $\phi$ by the spectral Sobolev spaces defined through the normal operator, and these spaces automatically provide RKHSs for the learning. Importantly, our tLSE offers an alternative approach to RKHS regularization for establishing the minimax upper rate.
    
Minimax rates for prediction accuracy (also called excess/prediction risk) have been established in \cite{cai_yuan2012} for scalar-on-function regression and in \cite{SunDuWangMa2018,Dette_Tang2024} for function-on-function regression in the form 
$	Y_i(x) = \alpha(x) + \int \phi(x,y)X_i(y)dy + \varepsilon_i(x)
$. 
We note that the predictor error is for forward estimation, which contrasts with the inverse problem of learning kernels in operators.

\vspace{2mm}\noindent\textbf{Inverse problems and their statistical variants.} Classical ill-posed inverse problem solves $\phi$ in the model $A\phi(x_i) + \varepsilon_i =f(x_i)$ from discrete data $\{f(x_i)\}_{i=1}^M$, where $A$ is a given compact operator and $\{x_i\}$ are deterministic meshes. Due to the vastness of the literature, we direct readers to \cite{engl1996regularization,hansen1998rank,wahba1990spline}, among others, for comprehensive reviews. A prototype example is the Fredholm equations of the first kind, which corresponds to Model \eqref{eq:R_phi_g} with $M=1$. The convergence of various regularized solutions has been extensively studied when the mesh refines, including the RKHS-regularized estimators in \cite{wahba1973convergence,wahba1977practical}. When $\{x_i\}$ are random samples, the problem is called inverse statistical learning in \cite{BM18} and statistical inverse learning problems in \cite{Helin2024}, where the minimax rate has been established. In these problems, due to the limited information from the data, it is natural to consider the estimator in the spectral Sobolev space of the normal operator $A^*A$, as illustrated in \cite{engl1996regularization,BM18, Helin2024}. Our study adopts this idea by using the normal operator from the inverse problem in the large sample limit. Additionally, learning kernels in operators can be viewed as estimating $\phi$ in the model $A_i\phi + \varepsilon_i =f_i$ from data $\{(A_i,f_i)\}$, a statistical learning inverse problem.

\vspace{2mm}\noindent\textbf{Minimax rates for nonparametric regression.} 
In classical nonparametric regression, where the goal is to estimate $f(x)= \E[Y|X=x]$ from samples $\{(X_i,Y_i)\}$, the minimax rate is a well-studied subject with a range of established tools (see, e.g., \cite{Gyorfi06a,tsybakov2008introduction,CuckerSmale02,van2000asymptotic,Wainwright2019} for comprehensive reviews). Common techniques for establishing lower bounds include the Le Cam, Assouad, and Fano methods, while upper bounds are typically proved using empirical process theory combined with covering arguments and chaining techniques or RKHS-regularized estimators \cite{deVito2005learning,caponnetto07,rosasco2010learning}. These approaches benefit from the well-posed nature of the classical regression problem, where the inverse problem in the large sample limit is characterized by an identity normal operator. Consequently, universal Sobolev spaces or H\"older classes are naturally employed to quantify function smoothness. 

In contrast, our study addresses an ill-posed regression problem, where the normal operator is compact. This setting motivates the use of adaptive spectral Sobolev spaces, as summarized in Table \ref{tab:SL-vs-kernel}. Although classical methods extend to well-posed problems, such as learning interaction kernels for particle systems (see, e.g., \cite{LZTM19pnas,LMT21,LMT21_JMLR}), they do not directly apply to our framework for establishing upper bounds. To overcome this challenge, we build upon the tLSE method introduced in \cite{wang2023optimal}. In our adaptation, the bias-variance trade-off is closely linked to the spectral decay of the normal operator. Thus, our study extends the tLSE method into a versatile tool for proving minimax upper rates for both well-posed and ill-posed statistical learning inverse problems.

\begin{table}[htbp]
\caption{Comparison of well-posed and ill-posed statistical learning problems: the normal operators in the large sample limit, the Sobolev spaces, the dominating orders of the bias and variance terms.
}\label{tab:SL-vs-kernel}
\begin{threeparttable}
\renewcommand{\arraystretch}{1.2}
\begin{tabular}{c||c|c|c|c}
\Xhline{1.2pt} 
\begin{tabular}[c]{@{}c@{}}Learning problem \\ (at $M = \infty$)\end{tabular} & \begin{tabular}[c]{@{}c@{}}Normal\\ operator\end{tabular} & \begin{tabular}[c]{@{}c@{}}Sobolev\\ space\end{tabular} &\, \,  Bias \, \, & Variance \, \,  \\ \Xhline{1pt} 
\textbf{Well-posed}  & $I$   & $H^\beta $     & $n^{-\frac{2\beta}{d}}$   & $n/M$\\ \hline
\textbf{Ill-posed}   & \begin{tabular}[c]{@{}c@{}}$\LGbar$ \\ (compact)\end{tabular}     & $H^\beta_\rho = \LGbar^{\frac{\beta}{2}}\left(L_\rho^2\right)$   & $\lambda_n^\beta$ & \, \,  $\begin{cases}  n^{1+2r}/M & \text{if } \lambda_n \asymp n^{-2r},\\    e^{rn}/M & \text{if } \lambda_n \asymp e^{-rn}\\    \end{cases}$ \\ \Xhline{1.2pt} 
\end{tabular}
 \begin{tablenotes}[para,flushleft]
{\footnotesize Here, $H^{\beta} $ is the classical periodic Sobolev space associated with the operator $\left(-\Delta\right)^{-1}$ on $[0,2\pi]^d$, which has a spectral decay of order $n^{-2/d}$. The space $H_\rho^\beta$ is defined through the normal operator $\LGbar$, whose eigenvalues are $\{\lambda_n\}_{n\geq 1}$. In the bias-variance tradeoff, the dominating order in the bias depends on the smoothness quantified by the Sobolev space, and the dominating order of the variance depends on the spectral decay of the normal operator.}
  \end{tablenotes}
  \end{threeparttable}

\end{table}

The rest of the paper is organized as follows. Section \ref{sec:spaces} introduces the model settings and defines the function spaces that are adaptive to this learning problem. Section \ref{sec:uprate} proves the minimax upper rates, which are achieved by the tLSE. Section \ref{sec:lower} presents the proofs for the lower minimax rates. We postpone technical proofs to the Appendix. 

\vspace{2mm}\noindent\textbf{Notations.} 
Hereafter, we denote the pairing between a Hilbert space $\spaceY$ and its dual action $z$ by $\innerp{z,y}$ with $y\in \spaceY$, and use $\innerp{\cdot,\cdot}_\spaceY$ to denote the inner product of $\spaceY$ as a Hilbert space. The underlying probability space in this study is complete and is denoted by $(\Omega,\calF,\P)$. We denote by $\P_\phi$ the distribution of the data from the model with kernel $\phi$.  $\E$ and $\E_\phi$ denote expectations with respect to $\P$ and $\P_\phi$, respectively.  
We simplify the notation by using $L^2_\rho:= L^2(\calS,\mathcal{B}(\calS),\rho)$ and $L^2(\Omega):= L^2(\Omega,\calF,\P)$ for the spaces of square-integrable functions and random variables, respectively. We denote by $\phi_*= \sum_{k=1}^\infty\theta_k^*\psi_k\in L^2_\rho$ the true kernel, where $\{\psi_k\}$ is an orthonormal basis of $L^2_\rho$.

 \section{Function spaces of learning}\label{sec:spaces}

\subsection{Abstract model settings}\label{sec:abstract_settings}
Consider the problem of estimating the parameter $\phi\in L^2_\rho$ in the operator equation  
\begin{align}\label{eq:model_general}
f= R_\phi[u] + \varepsilon  
\end{align} 
from data consisting of random sample input-output pairs $\{(u^m,f^m)\}_{m=1}^M$. Here, $\calS\subset\R^d$ is a compact set and $\rho$ is a Borel measure on $\calS$. Suppose $\spaceX$ is a Banach space and $\spaceY$ is a separable Hilbert space. We make the following assumptions about the forward operator $R_\phi:\spaceX\to \spaceY$, the distribution of the input $u$, and the noise $\varepsilon$. 
\begin{assumption}[Forward operator and input distribution]
\label{assum:model}
The forward operator is linear in the parameter, and the normal operator is compact:    
\begin{itemize}
        \item \emph{Linearity.} $R_\phi[u]$ is linear in $\phi$, i.e., $R_{\phi+\psi}[u]=R_{\phi}[u]+ R_{\psi}[u] $, $\forall \phi,\psi\in L^2_\rho$ and $u\in \spaceX$.   
        \item \emph{Spectral decay.} The \textbf{normal operator} $\LGbar: L^2_{\rho}\to L^2_{\rho}$ defined by
		  \begin{equation}\label{eq:normal_opt}
			\begin{aligned}
				\langle\LGbar\phi,\psi \rangle_{L_\rho^2} 
		         &= \E [\langle R_\phi[u], R_\psi[u]\rangle_{\spaceY} ],\quad  \forall \phi,\psi\in L^2_\rho,
		  	\end{aligned}
		  \end{equation}
		     is nonnegative, self-adjoint, compact, and has its positive eigenvalues $\{\lambda_k\}_{k=1}^\infty$ decaying either polynomially or exponentially, i.e., there exist $b\geq a>0$ such that    
		 \begin{enumerate}[label=$(\mathrm{A\arabic*})$]
		 \item \emph{ \bf Polynomial decay: } $ ak^{-2r}\leq \lambda_k\leq bk^{-2r}$ with $r>1/4$; or  \label{assum:polydecay}
		 \item \emph{ \bf Exponential decay:} $ a\exp(-rk)\leq \lambda_k\leq b\exp(-rk)$ with $r>0$. 
		 \label{assum:expdecay}
		 \end{enumerate}
		 Note that in either case, $\LGbar$ is Hilbert-Schmidt with $\sum_{k=1}^\infty \lambda_k^2<+\infty$. 
\end{itemize}
\end{assumption}

By the linearity of the operator, the learning of the kernel is a linear regression problem. The normal operator comes from the variational inverse problem in the large sample limit (see Section \ref{sec:inversionOperator}), and it is a self-adjoint compact operator, which we prove for Model \eqref{eq:R_phi_g} in Section \ref{sec:deconv}. 

In particular, the spectral decay condition is commonly used for deterministic ill-posed inverse problems (see, e.g., \cite{engl1996regularization,hansen1998rank}) and statistical inverse problems (see, e.g.,\cite{hall2007methodology,BM18,yuan2010reproducing}). It quantifies the ill-posedness of the inverse problem.  

\begin{assumption}[Conditions on the noise.]
\label{assump:noise-X-general}
The noise $\varepsilon$ is independent of $u$, and it is a linear map $\varepsilon: \spaceY\to L^2(\Omega)$, $y \mapsto \innerp{\varepsilon,y}$, satisfying the following two conditions.
\begin{enumerate}[label=$(\mathrm{B\arabic*})$]
 \item  \label{assum:noise_up} It is centered and square-integrable, i.e., for all $y\in \spaceY$, $\E[\innerp{\varepsilon,y}]=0$ and 
 \begin{equation}\label{ineq:noise_up}
     \E\left[\innerp{\varepsilon,y}^2\right] \leq\sigma^2\|y\|_\spaceY^2 
 \end{equation}
 for some $\sigma>0$ that is uniform for $y\in\spaceY$. 
 \item\label{assum:noise_low} For some orthonormal basis $\{y_i\}$ of $\spaceY$, the distribution of $\left(\innerp{\varepsilon,y_1},\cdots,\innerp{\varepsilon,y_N}\right)$ has a probability density function $p_N$ in $\R^N$ (with respect to the Lebesgue measure). Moreover, $p_N$ satisfies
 \begin{equation}\label{ineq:noise_low}
     \kl{p_N,p_N(\cdot +v)}= \int_{\R^N}\log\left(\frac{p_N(x)}{p_N(x+v)}\right)p_N(x)\,dx \leq \frac{\tau}{2}\|v\|^2,
 \end{equation} 
 for a constant $\tau>0$ that is uniform for all $N$ and $v\in\R^N.$ 
 \end{enumerate}
\end{assumption}
Conditions \ref{assum:noise_up} and \ref{assum:noise_low} are used for the minimax upper and lower rates, respectively.

The noise can be either Gaussian or non-Gaussian, and the space $\spaceY$ can be either finite- or infinite-dimensional. When $\spaceY$ is finite-dimensional, the linear map induced by a Gaussian random variable satisfies both conditions, i.e., $    \left(\innerp{\varepsilon,y_1},\cdots,\innerp{\varepsilon,y_N}\right)\sim\calN(0,\sigma^2\mathrm{Id}_N)$ satisfies \eqref{ineq:noise_up} and \eqref{ineq:noise_low} with $\tau = 1/\sigma^2$. A non-Gaussian example is the logistic distribution, that is, the random vector $\left(\innerp{\varepsilon,y_1},\cdots,\innerp{\varepsilon,y_N}\right)$ has i.i.d.~marginal components with probability density function $p(x) = e^{-x}(1+e^{-x})^{-2}$, and it satisfies \eqref{ineq:noise_up} with $\sigma^2= \pi^2/3$ and \eqref{ineq:noise_low} with $\tau = 25/6 $; see {Example \ref{eg:logistic}} in the Appendix for details. 

When $\spaceY$ is infinite-dimensional, the isonormal Gaussian process indexed by $\spaceY$ satisfies Conditions \ref{assum:noise_up}--\ref{assum:noise_low} with $\sigma=1$ and $\tau=1$. 
Recall that an isonormal Gaussian process indexed by $\spaceY$, denoted by $\varepsilon=(\innerp{\varepsilon,y},y\in \spaceY)$, is a family of centered Gaussian random variables satisfying $\E[\innerp{\varepsilon,h}\innerp{\varepsilon,g}] =\innerp{h,g}_{\spaceY}$ for all $h,g\in \spaceY$.
 In this case, Eq.\eqref{eq:model_general} is interpreted in the weak sense as  
\begin{align*}
\innerp{f,y} = \innerp{R_\phi[u],y}_\spaceY + \innerp{\varepsilon,y}, \,\, \forall y\in \spaceY. 
\end{align*} 
In particular, when $\spaceY= L^2(\calX,\mathcal{B}(\calX), \nu)$ with $\mathcal{B}(\calX)$ being the Borel sets of a domain $\calX\subseteq \R^d$ and $\nu$ being a  $\sigma$-finite measure without atoms, the $L^2(\Omega)$-valued measure $\varepsilon(A):= \innerp{\varepsilon,\mathbf{1}_A}$ for $A\in \mathcal{B}(\calX)$ is the \emph{white noise} on  $(\calX,\mathcal{B}(\calX))$; see, e.g., \cite[Section 1.1]{Nua06} and \cite{da2006introduction,hoffmann2008nonlinear,Hu2016analysis}. For example, when $\spaceY= L^2([0,1])$, the white noise $\varepsilon$ is formally the derivative of the standard Brownian motion $B(x)$ (which is called a generalized stochastic process in \cite[Section 3.1]{oksendal2013_sde}), and Eq.\eqref{eq:model_general} is formally a stochastic differential equation 
$ f(x) = \dot{X}(x) = R_\phi[u](x) + \dot{B}(x)$,  
which is interpreted as $X(x) = X(0)+ \int_0^x R_\phi[u](z)dz + B(x)$ for $x\in [0,1]$.

Importantly, in practice, discretization connects finite- and infinite-dimensional spaces. For example, when $\calX$ is an interval, a partition $\calX = \bigcup_{i=1}^N A_i$ connects the above infinite-dimensional space $\spaceY= L^2(\calX,\mathcal{B}(\calX), \nu)$ with a finite-dimensional observation space $\widetilde \spaceY= L^2(\widetilde \calX,\widetilde\nu)$ when one takes $\widetilde \calX= \{x_i\}_{i=1}^N$ and $\widetilde\nu(x_i) = \nu(A_i)$, where the $\{A_i\}$ are pairwise disjoint intervals and $x_i\in A_i$ for each $1\leq i \leq N$. In particular, when $R_\phi[u]$ is a piecewise constant function on the partition and when the test functions are $\{\mathbf{1}_{A_i}\}_{i=1}^N$, the above weak form equation leads to a discrete model 
\begin{align*}
f(x_i):= \innerp{f,\mathbf{1}_{A_i}} = \innerp{R_\phi[u],\mathbf{1}_{A_i}}_\spaceY + \innerp{\varepsilon,\mathbf{1}_{A_i}} =  R_\phi[u](x_i) + \varepsilon_i, \,\, 1\leq i\leq N,
\end{align*} 
where the noise has a distribution $(\varepsilon_1,\ldots,\varepsilon_N) \sim \mathcal{N}(0,\mathrm{diag}(\widetilde\nu(x_i)_{1\leq i\leq N}))$.

\subsection{Learning the convolution kernels}\label{sec:deconv}
We show in this section that the deconvolution problem of learning the kernel in Model \eqref{eq:R_phi_g} from data satisfies the abstract model settings in Section \ref{sec:abstract_settings}. 
 
 In practice, when fitting Model \eqref{eq:R_phi_g} to data, neither the measure $\rho$ nor the associated function space $L^2_\rho$ is predetermined. Instead, one may choose $\rho$ in a data-driven manner so that it reflects how the data explore the kernel $\phi$. Depending on the smoothness and boundedness of $g[u]$, various definitions of $\rho$ can be adopted to ensure that the normal operator in \eqref{eq:normal_opt} is compact. In the following, we define $\rho$ as the mean square average weight induced by $g[u]$ on the set  
$
\calS := \overline{\Big\{ s : \E \Big[\int_\calX \big|g[u](x,s)\big|^2 \,\nu(dx)\Big] > 0 \Big\}},
$
though one may alternatively choose the Lebesgue measure on $\calS$ or define $\rho$ through the mean of $|g[u](x,s)|$ as in \cite{LangLu21id}.

\begin{definition}[Exploration measure $\rho$.] \label{def:rho}
For {\rm Model} \eqref{eq:R_phi_g} with $\spaceY = L^2(\calX,\nu)$, suppose that the distribution of the data $u$ satisfies  
 \begin{equation}\label{eq:gL2}
    Z:= \E\left[\int_\calX\int_\calS | g[u](x,s)|^2 \nu(dx) \, ds \right] < +\infty.
 \end{equation}
We define an exploration measure $\rho$ by its density function given by
 \begin{equation}\label{def:rho_dot}
     \dot{\rho}(s) =\frac{1}{Z} \E\left[\int_\calX | g[u](x,s)|^2 \nu(dx) \right].
 \end{equation}
 \end{definition}
 
 With the above exploration measure $\rho$, the forward operator of Model \eqref{eq:R_phi_g} defines a square-integrable $\spaceY$-valued random variable when the volume of $\calS$ is finite. That is, for all $\phi\in L_\rho^2$, we have, by Cauchy-Schwartz inequality, 
 \begin{align*}
     \E\left[\|R_{\phi}[u]\|_\spaceY^2\right] &= \E\left[\int_\calX\left(\int_\calS\phi(s)g[u](x,s)ds\right)^2\nu(dx)\right]\\
     &\leq \E\left[\int_\calX\int_\calS\phi^2(s)g^2[u](x,s)ds\,\nu(dx)\right] {\rm{vol}}(\calS)=  {\rm{vol}}(\calS)\, Z \|\phi\|_{L_\rho^2}^2<+\infty. 
 \end{align*}
 
Furthermore, the normal operator of learning the kernel $\phi$ is a compact integral operator, as the next proposition shows (see the Appendix for its proof). 
\begin{proposition}[Compact normal operator]\label{prop:compactLG}
For {\rm Model} \eqref{eq:R_phi_g} satisfying \eqref{eq:gL2} and ${\rm{vol}}(\calS)<+\infty $, its normal operator $\LGbar$ is nonnegative, self-adjoint, compact, and has an integral kernel $\Gbar\in L^2(\rho\otimes \rho)$ with $\rho$ in {\rm Definition \ref{def:rho}}.    
\end{proposition}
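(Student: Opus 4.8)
The plan is to first derive an explicit integral-kernel representation of $\LGbar$, and then verify that the kernel lies in $L^2(\rho\otimes\rho)$, since a Hilbert--Schmidt integral operator on $L^2_\rho$ is automatically compact, and self-adjointness and nonnegativity are immediate from the defining identity $\innerp{\LGbar\phi,\psi}_{L^2_\rho} = \E[\innerp{R_\phi[u],R_\psi[u]}_\spaceY]$. First I would write out this bilinear form for Model \eqref{eq:R_phi_g} with $\spaceY = L^2(\calX,\nu)$: by Fubini (justified below),
\[
\innerp{\LGbar\phi,\psi}_{L^2_\rho} = \E\left[\int_\calX \left(\int_\calS \phi(s)g[u](x,s)\,ds\right)\left(\int_\calS \psi(s')g[u](x,s')\,ds'\right)\nu(dx)\right] = \int_\calS\int_\calS \phi(s)\psi(s') K(s,s')\,ds\,ds',
\]
where $K(s,s') := \E\big[\int_\calX g[u](x,s)g[u](x,s')\,\nu(dx)\big]$. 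To express this against the measure $\rho$ rather than Lebesgue measure on $\calS$, I set $\Gbar(s,s') := K(s,s')/(\dot\rho(s)\dot\rho(s'))$ on the set where $\dot\rho$ is positive (and $0$ elsewhere — note $\calS$ is exactly the support of $\dot\rho$, so this is harmless), so that $\innerp{\LGbar\phi,\psi}_{L^2_\rho} = \int\int \phi(s)\psi(s')\Gbar(s,s')\dot\rho(s)\dot\rho(s')\,ds\,ds' = \innerp{\Gbar(\cdot,\cdot), \phi\otimes\psi}_{L^2(\rho\otimes\rho)}$ in the appropriate sense, which identifies $\LGbar\phi(s) = \int_\calS \Gbar(s,s')\phi(s')\,\rho(ds')$.

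The main work is the bound $\Gbar\in L^2(\rho\otimes\rho)$, i.e. $\int\int |\Gbar(s,s')|^2\dot\rho(s)\dot\rho(s')\,ds\,ds' < \infty$. Using Cauchy--Schwarz on the expectation-integral defining $K$, namely $|K(s,s')|^2 \le \big(\E\int_\calX |g[u](x,s)|^2\nu(dx)\big)\big(\E\int_\calX |g[u](x,s')|^2\nu(dx)\big) = (Z\dot\rho(s))(Z\dot\rho(s'))$ by Definition \ref{def:rho}, I get $|\Gbar(s,s')|^2 = |K(s,s')|^2/(\dot\rho(s)^2\dot\rho(s')^2) \le Z^2/(\dot\rho(s)\dot\rho(s'))$. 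Hence
\[
\int_\calS\int_\calS |\Gbar(s,s')|^2\dot\rho(s)\dot\rho(s')\,ds\,ds' \le Z^2 \int_\calS\int_\calS ds\,ds' = Z^2\,{\rm vol}(\calS)^2 < \infty,
\]
using ${\rm vol}(\calS)<\infty$ and $Z<\infty$ from \eqref{eq:gL2}. This shows $\LGbar$ is Hilbert--Schmidt as an operator on $L^2_\rho$, hence compact; self-adjointness follows since $K(s,s')=K(s',s)$ gives $\Gbar(s,s')=\Gbar(s',s)$, and nonnegativity since $\innerp{\LGbar\phi,\phi}_{L^2_\rho} = \E[\|R_\phi[u]\|_\spaceY^2]\ge 0$.

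The main obstacle — really the only delicate point — is the repeated use of Fubini--Tonelli to interchange $\E$, $\int_\calX \nu(dx)$, and $\int_\calS ds$: I need to check that all the relevant integrands are jointly measurable and that the absolute-value versions are integrable, so that the manipulations are legitimate and $K$ is well-defined and finite $\rho\otimes\rho$-a.e. The finiteness is exactly what \eqref{eq:gL2} buys me (it controls $\E\int_\calX\int_\calS |g[u](x,s)|^2\,\nu(dx)\,ds = Z < \infty$), and combined with the $L^2$-boundedness of $R_\phi[u]$ already established before the proposition (the displayed Cauchy--Schwarz computation giving $\E[\|R_\phi[u]\|_\spaceY^2]\le {\rm vol}(\calS)\,Z\,\|\phi\|_{L^2_\rho}^2$), all interchanges are justified. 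One should also note $K(s,s')$ is finite for $\rho\otimes\rho$-a.e. $(s,s')$ because $s\mapsto \E\int_\calX|g[u](x,s)|^2\nu(dx) = Z\dot\rho(s)$ is finite for $\rho$-a.e.\ $s$; this is what makes $\Gbar$ a genuine a.e.-defined kernel. I would close with the standard fact that a Hilbert--Schmidt integral operator is compact, completing the proof.
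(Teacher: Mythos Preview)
Your proof is correct and follows essentially the same route as the paper's: identify the integral kernel $\Gbar(s,s')=K(s,s')/(\dot\rho(s)\dot\rho(s'))$, bound $|K(s,s')|^2\le Z^2\dot\rho(s)\dot\rho(s')$ via Cauchy--Schwarz, and integrate to get $\|\Gbar\|_{L^2(\rho\otimes\rho)}^2\le Z^2\,{\rm vol}(\calS)^2$. Your discussion of the Fubini--Tonelli justification is a bit more careful than the paper's, but the argument is otherwise the same.
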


The spectral decay rate of the normal operator $\LGbar$ depends on the smoothness of the integral kernel $\Gbar$ and the measure $\rho$; in particular, it depends on the dimension $d$ of $\calS\subset  \R^d$; see, e.g., \cite{carrijo2020approximation,ferreira2009eigenvalues,scetbon2021spectral}. For instance, the integral kernel $\Gbar$ in Example \ref{example:int_opt} below exhibits polynomially decaying eigenvalues (with a rate $\lambda_k \asymp k^{-\frac{2}{d}}$ when the example is generalized to $\calS= [0,1]^d$), while Gaussian kernels produce exponentially decaying eigenvalues, as illustrated in \cite[Chapter 4.3.1]{Rasmussen2006GP}.

Although it is relatively straightforward to construct integral operators with a prescribed spectral decay, it remains challenging to impose general explicit conditions on the function $g[u]$ to achieve a specific decay rate, especially when the measure $\rho$ is adaptive to the data distribution.

We consider three illustrative applications for learning kernels in operators: integral operators, nonlocal operators, and an aggregation operator. In practice, the corresponding normal operators may exhibit various types of spectral decays, including polynomial or exponential decays, depending on the data distribution (see, e.g., \cite{LuOu25,LangLu22,LAY22}). Here, we construct data distributions to achieve the desired spectral decay in the example of the integral operator. 

\begin{example}
\label{example:int_opt} 
 Let $\mathrm{supp}(\phi)\subset \calS= [0,1]$ and consider the \emph{integral operator} $R_\phi:\spaceX\to \spaceY$ defined by 
	\begin{equation}\label{eg:integral}
	R_\phi[u](x) = \int_{[-1,2]} \phi(x-y) u(y)dy = \int_{\calS}\phi(s) u(x-s)  ds, \, x\in \calX =[0,1]\, , 
	\end{equation}
	where $\spaceY = L^2(\calX,\nu)$ with $\nu$ being the Lebesgue measure on $\calX$, and 
	$
 \spaceX =\{u\in L^2([-1,2]):$ $u(x) = \sum_{k=1}^\infty \alpha_k \cos(2\pi k x),$ $\sum_{k=1}^\infty \alpha_k^2<+\infty \}
 $. Here, $g[u](x,s) = u(x-s)$ for $(x,s)\in \calX \times\calS$ and the second equality holds because the support of the kernel $\phi$ is in $\calS = [0,1]$. Let the random input functions be 
 \begin{equation}\label{eq:input_u}
 	u(x):=\sum_{k=1}^\infty X_k \cos(2\pi k x)\,,  
 \end{equation} 
 where $\{X_k\}_{k=1}^\infty$ is a sequence of independent $\calN(0,\sigma_k^2)$ random variables with $\sum_{k=1}^\infty \sigma_k^2<+\infty$. The exploration measure $\rho$ in \eqref{def:rho_dot} has density $\dot{\rho}\equiv 1$ due to the periodicity of $u$: 
   \begin{align*}
        \dot{\rho}(s) \, \propto \, \E\left[\int_0^1 | u(x-s)|^2 \nu(dx) \right]\equiv \E\left[\int_0^1 | u(x)|^2 \nu(dx) \right]
    \end{align*}
    Hence, $\Gbar(s,s')=G(s,s')$ and   
     	\begin{align*}
     		&\ \Gbar(s,s')  =  G(s,s')
     		= \int_\calX \E[u(x-s)  u(x-s') ]\, dx \\
     		= &\ \sum_{j=1}^\infty \sum_{k=1}^\infty \int_0^1  \E\left[  X_jX_k \cos(2\pi j (x-s))\cos(2\pi k (x-s'))  \right]\, dx \\
     		 = &\ \sum_{k=1}^\infty\frac{\sigma_k^2}{2} \cos(2\pi k (s-s'))=  \sum_{k=1}^\infty \frac{\sigma_k^2}{2} [\cos(2\pi ks)\cos(2\pi ks') + \sin(2\pi k s)\sin(2\pi k s')].
     		\end{align*} 
            Recall that $\{\mathbf{1}\}\cup\{\sqrt{2}\cos(2\pi k s), \sqrt{2}\sin(2\pi k s)\}_{k=1}^\infty$ is an orthonormal basis of $L^2_\rho$. Thus, the eigenvalues of $\LGbar:L^2_\rho\to L^2_\rho$ are $\lambda_{2k-1}=\lambda_{2k}= \sigma_k^2/4$ for $k\geq 1$ and $\lambda_0=0$, with eigenfunction $\mathbf{1}$. Thus, we obtain the polynomial or exponential decay if $\sigma_k$ decays accordingly. Notably, when $\sigma_k^2 = \frac{4}{(2\pi k)^{2}} $ for $k\geq 1$,  the RKHS $H_\Gbar$ is the Sobolev space with periodic functions (see, {\rm \cite[Chapter 1-2]{wahba1990spline}})
     		     		   \begin{align*}
				         W^0_{1,per} =\left \{ \phi' \in L^2([0,1]): \int_0^1 \phi(s)ds= 0, \ \phi(0) = \phi(1) \right\}.  
      \end{align*}
\end{example}

\begin{example}
\label{example:nonlocal_opt}
Consider the \emph{nonlocal operator} with radial interaction kernel:
	\begin{equation}\label{eg:nonlocal}
	R_\phi[u](x) = \int_{|y|\leq 1} \phi(|y|) [u(x+y)-u(x)] dy = \int_{[0,1]}\phi(s) g[u](x,s) ds,  \, x\in \calX=[0,1]\, 
	\end{equation}
	where $g[u](x,s) = u(x+s)+u(x-s)-2u(x)$ for $(x,s)\in \calX\times \calS $ with $\calS= [0,1]$. It arises in the nonlocal PDE $\partial_{tt}u = R_\phi[u]$ for peridynamics; see, e.g., {\rm\cite{you2024nonlocal,LAY22}}. Consider the same function spaces $\spaceX$ and $\spaceY $ and random inputs $u$ as in {\rm Example \ref{example:int_opt}}. Direct computations in the Appendix show that 
		\[G(s,s')= 2\sum_{k=1}^\infty \sigma_k^2 \,\bigl(\cos(2\pi k \,s)-1\bigr) \bigl(\cos(2\pi k \,s')-1\bigr)\,. \] 
	Clearly, $Z=\int_\calS G(s,s)ds<+\infty$, so the normal operator $\LGbar:L^2_\rho\to L^2_\rho$ is compact. 
\end{example}

\begin{example}
\label{example:Aggr_opt}
 Consider estimating $\phi:\calS=[0,1]\to \R$ in the \emph{aggregation operator} 
\begin{equation}\label{eg:aggregation}
		R_\phi[u](x) = \int_{|y|\leq 1} \phi(|y|)\frac{y}{|y|} \partial_x[ u(x+y)u(x)] dy = \int_{\calS} \phi(s) g[u](x,s) ds,\quad  x\in \calX= [0,1]
		\end{equation}
		with $g[u](x,s) = \partial_x[ u(x+s)u(x)] - \partial_x[ u(x-s)u(x)]$ for $(x,s)\in \calX\times \calS $. The operator 
 $R_\phi[u] = \nabla\cdot(u\nabla\Phi*u)$ arises in the mean-field equation $\partial_t u = \nu \Delta u + \nabla \cdot  (u\nabla\Phi*u)$ on $\R^1$ for interacting particle systems {\rm\cite{carrillo2019aggregation,LangLu21id}}, where $\Phi$ is a radial interaction potential satisfying $\phi= \Phi'$. Letting $\spaceY = L^2(\calX)$ and $\spaceX =\{u\in C_c^1([-1,2]): u(x)\geq 0, \forall x\in [-1,2] \} $. Consider the random input functions: 
$$
u(x,\omega)= 1 + \sum_{n=1}^\infty
a_n\,\zeta_n(\omega)\,\cos\bigl(2\pi n\,x\bigr),
\quad x\in[-1,2], 
$$
where $\{\zeta_n\}_{n\ge1}$ are i.i.d.~Rademacher random signs ($\P(\zeta_n=\pm1)=\tfrac12$), and $a_n>0$ with $\sum_n n a_n<1$. Then, the explicitly computed $G(s,s')$ in the Appendix shows that $Z=\int_\calS G(s,s)ds<+\infty$, so the normal operator $\LGbar:L^2_\rho\to L^2_\rho$ is compact. 
	\end{example}

  \subsection{Inverse problem in the large sample limit}
\label{sec:inversionOperator}
To understand the statistical learning problem, we start with the deterministic inverse problem in the large sample limit, which lays the foundation for defining the function spaces for learning.

In a variational inference approach, we find an estimator by minimizing the empirical loss function of the data samples $\{(u^m,f^m)\}_{m=1}^M$: 
\begin{equation}\label{eq:lossFn}
	\calE_M(\phi) := \frac{1}{M}\sum_{m=1}^M \big( \|R_\phi[u^m]\|_\spaceY^2  - 2\innerp{f^m, R_\phi[u^m]} \big) .
	\end{equation}
This loss function is the scaled log-likelihood of the data when the noise is standard Gaussian. In particular, when $\spaceY$ is infinite-dimensional and the noise is white, it is $\calE_M(\phi)= - \frac{2}{M}\log\frac{d\P_\phi}{d\P_0} $, where the Radon-Nikodym derivative $\frac{d\P_\phi}{d\P_0}$ is given by the Cameron-Martin formula; see {Remark \ref{rmk:CM-space}} in the Appendix.

By the strong Law of Large Numbers and {\rm Assumption \ref{assum:model}}, we have  
\[
\calE_\infty(\phi):=\lim_{M\to \infty}  \calE_M(\phi)  = \E[\|\R_\phi[u]\|_\spaceY^2] - 2\E[\innerp{f,R_\phi[u] }] = \innerp{\LGbar\phi,\phi}_{L^2_\rho} - 2\innerp{\LGbar\phi_*,\phi}_{L^2_\rho}, 
\]
where $\phi_*$ denotes the true kernel. Hence, the set of minimizers of $\calE_\infty$ is 
\[
\{\phi\in L^2_\rho:  \nabla \calE_\infty(\phi) =2(\LGbar\phi- \LGbar\phi_*)= 0\} = \phi_*+ \ker(\LGbar). 
\]
That is, the minimizer of $\calE_\infty$ is non-unique unless $\ker(\LGbar)= \{0\}$. However, as shown in the previous section, the null space of $\LGbar$ may have non-zero elements. 

Importantly, we can only identify the projection of $\phi_*$ in $\ker(\LGbar)^\perp$ when minimizing the loss function with infinite samples. That is, when solving $\nabla \calE_\infty(\phi) = 0$, we can only identify $\widehat \phi = \LGbar^{\dagger} (\LGbar\phi_*) = P_{\ker(\LGbar)^\perp}\phi_*$, which is the least squares estimator with minimal norm. Thus, it is crucial to restrict the estimation in $\ker(\LGbar)^\perp$. 
This motivates us to define spectral Sobolev spaces based on the normal operator in the next section.

\subsection{Spectral Sobolev spaces}\label{sec:SSS}
We introduce spectral Sobolev spaces adaptive to the model through its normal operator $\LGbar$. They quantify the ``smoothness'' of a function $\phi$ in terms of the decay of its coefficients relative to the spectral decay of $\LGbar$. This adaptability ensures that the null space of $\LGbar$, whose elements cannot be identified from the data, is excluded. These spaces arise naturally in nonparametric regression and are generalizations of the source sets in inverse problems (see, e.g., \cite[Eq.(3.29)]{engl1996regularization} and \cite[Eq.(2.5)]{BM18}) and the RKHSs in functional linear regression in \cite{hall2007methodology,yuan2010reproducing,balasubramanian22unified}. 
\begin{definition}[Spectral Sobolev spaces.]\label{def:SSS} 
Assume that the normal operator $\LGbar$ in \eqref{eq:normal_opt} is Hilbert-Schmidt. Denote $\{\psi_k\}_{k=1}^\infty$ the orthonormal eigenfunctions corresponding to the positive eigenvalues $\{\lambda_k\}_{k=1}^\infty$ in descending order. For $\beta\geq 0$ and $L>0$, define the spectral Sobolev space $H^\beta_\rho$ and class $H_\rho^\beta(L)\subset \ker(\LGbar)^\bot \subset L^2_\rho$ as 
\[
H^\beta_\rho :=\left\{\phi = \sum_{k=1}^\infty \theta_k\psi_k: \|\phi\|_{H^\beta_\rho}^2=\sum_{k=1}^\infty \lambda_k^{-\beta}\theta_k^2 <+\infty \right\}, \  H_\rho^\beta(L) := \{\phi\in H^\beta_\rho: \|\phi\|_{H^\beta_\rho}^2 \leq L^2\}. 
\]
\end{definition}

The spectral Sobolev spaces differ from the classical model-agnostic Sobolev spaces (or the H\"older spaces), which are commonly used in nonparametric regression \cite{Gyorfi06a,CuckerSmale02,tsybakov2008introduction}. Classical spaces provide a universal quantification of the smoothness independent of the model and measure $\rho$, making them suitable for problems for classical regression problems that estimate $f(x)$ in the model $Y = R_f(X)+\varepsilon$ with $R_f(x)= f(x)$ from data $\{(X^m, Y^m)\}$. For these problems, the normal operator is the identity operator, whose null space is $\{0\}$. However, classical spaces are not suitable for the learning kernel in operators since they may include nonzero elements of $\ker(\LGbar)$ that cannot be identified from the data. By construction, $H^\beta_\rho$ avoids this issue, offering a tailored alternative to these classical spaces.

The spectral Sobolev space $H^\beta_\rho$ is a generalization of the classical periodic Sobolev space (see, e.g., \cite{wahba1990spline}), as shown in Example \ref{example:int_opt_H} below. Unlike these classical spaces, $H^\beta_\rho$ adapts to the specific spectral properties of $\LGbar$, making it better suited for problems involving compact normal operators. 

\begin{example}[Periodic Sobolev spaces]
\label{example:int_opt_H} 
For {\rm Example \ref{example:int_opt}}, the normal operator has eigenvalues $\lambda_{2k-1}=\lambda_{2k} = \sigma_k^2/4>0 $ and eigenfunctions $\psi_{2k-1}(s)= \sqrt{2}\cos(2\pi k s)$ and $\psi_{2k}(s) = \sqrt{2}\sin(2\pi k s)$ for $k\geq 1$. It has a zero eigenvalue and $\ker{(\LGbar)} = \mathrm{span}\{\mathbf{1}\}$. The adaptive spectral Sobolev space is 
\[
H^\beta_\rho = \left\{\phi(s) =\sqrt{2} \sum_{k=1}^\infty (\theta_{2k-1}\cos(2\pi k s) + \theta_{2k} \sin(2\pi k s)) \in L^2([0,1]): \sum_{k=1}^\infty \lambda_k^{-\beta} \theta_k^2<+\infty \right\}.   
\]
In particular, when $\sigma_k^2=\frac{4}{(2\pi k)^{2}}$, the space $H^\beta_\rho$ with $\beta=1$ is the periodic Sobolev space $W^0_{1,per}$. 
\end{example}

Furthermore, the spectral Sobolev space $H^\beta_\rho$ is closely related to RKHS when $\LGbar$ has an integral kernel $\Gbar$ as in Proposition \ref{prop:compactLG}. In particular, when $\beta=1$, the space $H^1_\rho= \LGbar^{1/2}L^2_\rho$ is the RKHS associated with $\Gbar$. They have been used for regularization in \cite{LangLu23small}.

The space $H^\beta_\rho$ controls the decay of the coefficients of $\phi$, as the next lemma shows.  
\begin{lemma}\label{lemma:SSS}
	If $\phi = \sum_{k=1}^\infty \theta_k\psi_k\in H_\rho^\beta(L)$. Then  $\sum_{k=n}^\infty |\theta_k|^2 \leq L^2 \lambda_n^{\beta} $ for all $n\geq 1$.
	\end{lemma}
\begin{proof} It follows directly from that $\sum_{k=n}^\infty |\theta_k|^2 \leq \lambda_n^{\beta} \sum_{k=n}^\infty \lambda_k^{-\beta}|\theta_k|^2 \leq L^2 \lambda_n^{\beta}\,.$ 
\end{proof}


 \section{Minimax upper rates}
\label{sec:uprate}
In this section, we establish the minimax upper rate by demonstrating it as the upper bound for a tamed least squares estimator (tLSE). The tLSE offers a relatively straightforward proof of the minimax rate, leveraging the left-tail probability of the small eigenvalues of random regression matrices. Originally introduced in \cite{wang2023optimal} for a problem involving a coercive normal operator (i.e., where the eigenvalues of $\LGbar$ have a positive lower bound), its applicability is extended in this section to problems with compact normal operators. This innovation, combined with the results in \cite{wang2023optimal}, highlights tLSE as a versatile and powerful tool to establish minimax upper rates in general nonparametric regression, regardless of whether the normal operator is coercive or non-coercive.
  
\subsection{A tamed projection estimator}\label{sec:tLSE}
We construct the following tamed least squares estimator, which is the minimizer of the quadratic loss function in \eqref{eq:lossFn} over the hypothesis space $\calH_n=\mathrm{span}\{\psi_k\}_{k=1}^n$ when the random regression matrix is suitably well-posed and is zero otherwise.  
 
\begin{definition}[Tamed Least Square Estimator (tLSE)]\label{def:tLSE}
   Let $ \{\psi_k\}_{k=1}^\infty$ be the orthonormal eigenfunctions corresponding to the decaying eigenvalues $\{\lambda_k\}_{k=1}^\infty$ of the normal operator $\LGbar$ in \eqref{eq:normal_opt}.  The tLSE in $\calH_n= \mathrm{span}\{\psi_k\}_{k=1}^n$ is  
    \begin{equation}\label{eq:tLSE}
    \begin{aligned}
      \widehat{\phi}_{n,M} & = \sum_{k=1}^{n} \widehat \theta_k \psi_k \, \text{ with }\widehat \btheta_{n,M} = (\widehat \theta_1,\ldots,\widehat \theta_n)^\top  = \Abar_{n,M} ^{-1}\bbar_{n,M}\mathbf{1}_{\calA}, \quad \\ 
       \calA :&= \begin{cases}
				 \{ \lambda_{\min}(\Abar_{n,M}) > \lambda_n/4\} , & \text{if polynomial decay}; \\
            \{\lambda_{k}(\Abar_{n,M}) > \lambda_k/4,\,\forall k\leq n\}
, & \text{if exponential decay}, 
		 \end{cases}
	\end{aligned}
    \end{equation}
  where the normal matrix $\Abar_{n,M}$ and normal vector $\bbar_{n,M}$ are given by 
  \begin{equation}\label{eq:Ab}
	\begin{aligned}
		\Abar_{n,M}(k,l) = \frac{1}{M}\sum_{m=1}^M\innerp{R_{\psi_k}[u^{m}], R_{\psi_l}[u^{m}]}_\spaceY,  \quad 
		\bbar_{n,M}(k)  = \frac{1}{M}\sum_{m=1}^M\innerp{ f^{m},R_{\psi_k}[u^{m}]}.
	\end{aligned} 
\end{equation} 
\end{definition}

The tLSE is a ``tamed'' version of the classical least squares estimator   
\begin{equation}
	\widehat \phi_{n,M}^{lse} = \sum_{k=1}^{n} \widehat \theta_k^{lse} \psi_k, \quad \text{ with }  (\widehat \theta_1^{lse},\ldots,\widehat \theta_n^{lse})^\top = \Abar_{n,M}^{\dag}\bbar_{n,M},
\end{equation}
where $A^\dag$ denotes the Moore--Penrose inverse satisfying $A^\dag A = A A^\dag  = {\rm{Id}}_{\rm{rank}(A)}$. The LSE is unstable since the empirical normal matrix $\Abar_{n,M}$, whose smallest eigenvalue can be arbitrarily small, is ill-conditioned. In contrast, the tLSE is stable by using the LSE only when the eigenvalues of $\Abar_{n,M}$ are not too small, and it is zero otherwise.  

Additionally, the tLSE differs from the classical truncated SVD estimator (see, e.g., \cite{hansen1987_TruncatedSVD}), which stabilizes the inversion of $A_{n,M}$ by retaining only those singular values above a fixed threshold. By contrast, the tLSE is zero when an eigenvalue falls below the threshold. Although this hard‐thresholding rule is rarely optimal in practice with finitely many samples, it provides a remarkably tractable estimator for establishing sharp minimax-rate bounds.

 The next lemma shows that in the large sample limit, the tamed LSE recovers the LSE, equivalently, the projection of the true function in the hypothesis space $\calH_n$. It follows directly from the strong Law of Large Numbers, so we omit its proof. 
\begin{lemma}\label{Lem:projEst} Under {\rm Assumption \ref{assum:model}} on the model and {\rm Assumption \ref{assump:noise-X-general} \ref{assum:noise_up}} on the noise, let $\{(\lambda_k,\psi_k)\}_{k=1}^\infty$ be the eigen-pairs of the normal operator $\LGbar$ with $ \{\psi_k\}_{k=1}^n$ being orthonormal, and consider the normal matrice ${\Abar}_{n,M}$ and vector ${\bbar}_{n,M}$ in \eqref{eq:Ab}. Then, the limits ${\Abar}_{n,\infty}(k,l)=\lim_{M\to \infty} {\Abar}_{n,M}(k,l)$ and $\bbar_{n,\infty}(k)=\lim_{M\to \infty} {\bbar}_{n,M}(k)$ exist and satisfy
\begin{equation}{\label{Eq:AbEE}}
\begin{aligned}
	{\Abar}_{n,\infty}(k,l) &=  \E[\innerp{R_{\psi_k}[u],R_{\psi_l}[u]}_{\spaceY}] =\innerp{\LGbar\psi_k,\psi_l}_{L^2_\rho} = \lambda_k\delta_{kl} \,,\quad \forall\ 1\leq k,l\leq n\,;   \\
 {\bbar}_{n,\infty}(k) & =  \E[\innerp{f,R_{\psi_k}[u]}]=\innerp{\LGbar\phi,\psi_k}_{L^2_\rho} = \lambda_k\theta_k^* ,\quad \forall\ 1\leq k \leq n \,, 
\end{aligned}
\end{equation}
where $\theta_k^*$ are the coefficients of the true kernel $\phi_*=\sum_{k=1}^\infty \theta_k^* \psi_k$. Consequently,  
\begin{equation}\label{Eq:Asyp_theta}
	\btheta_n^*:=(\theta_1^*,\theta_2^*,\cdots,\theta_n^*)^\top=\Abar_{n,\infty}^{-1}\bbar_{n,\infty}. 
\end{equation}
\end{lemma}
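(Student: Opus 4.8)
The plan is to recognize each entry of $\Abar_{n,M}$ and $\bbar_{n,M}$ as an empirical mean of i.i.d.\ scalar random variables, check integrability, and invoke the strong law of large numbers (SLLN); the identity~\eqref{Eq:Asyp_theta} then drops out because the limiting normal matrix is diagonal with the positive eigenvalues on the diagonal. Fix $1\le k,l\le n$. The summands $\innerp{R_{\psi_k}[u^m],R_{\psi_l}[u^m]}_\spaceY$ are i.i.d.\ copies of $\xi:=\innerp{R_{\psi_k}[u],R_{\psi_l}[u]}_\spaceY$, and by Cauchy--Schwarz in $\spaceY$ together with the definition~\eqref{eq:normal_opt} of $\LGbar$, one has $\E|\xi|\le\big(\E\norm{R_{\psi_k}[u]}_\spaceY^2\big)^{1/2}\big(\E\norm{R_{\psi_l}[u]}_\spaceY^2\big)^{1/2}=\big(\innerp{\LGbar\psi_k,\psi_k}_{L^2_\rho}\innerp{\LGbar\psi_l,\psi_l}_{L^2_\rho}\big)^{1/2}=\sqrt{\lambda_k\lambda_l}<\infty$, so the SLLN gives $\Abar_{n,M}(k,l)\to\E[\xi]=\innerp{\LGbar\psi_k,\psi_l}_{L^2_\rho}$ almost surely. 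Since $\LGbar\psi_k=\lambda_k\psi_k$ and $\{\psi_k\}$ is orthonormal, this limit equals $\lambda_k\delta_{kl}$, i.e.\ $\Abar_{n,\infty}=\mathrm{diag}(\lambda_1,\dots,\lambda_n)$.

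For the vector, I would substitute $f=R_\phi[u]+\varepsilon$, so that $\innerp{f,R_{\psi_k}[u]}=\innerp{R_\phi[u],R_{\psi_k}[u]}_\spaceY+\innerp{\varepsilon,R_{\psi_k}[u]}$. The first term is integrable by the same estimate, $\E\big|\innerp{R_\phi[u],R_{\psi_k}[u]}_\spaceY\big|\le\big(\innerp{\LGbar\phi,\phi}_{L^2_\rho}\,\lambda_k\big)^{1/2}<\infty$ since $\LGbar$ is bounded, and its mean is $\innerp{\LGbar\phi,\psi_k}_{L^2_\rho}=\innerp{\phi,\LGbar\psi_k}_{L^2_\rho}=\lambda_k\theta_k$ by self-adjointness of $\LGbar$. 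For the noise term I would condition on $u$: since $\varepsilon$ and $u$ are independent and $R_{\psi_k}[u]\in\spaceY$, applying Condition~\ref{assum:noise_up} to the conditionally deterministic element $R_{\psi_k}[u]$---after expanding it in a fixed orthonormal basis of $\spaceY$ and passing to the limit coordinatewise---gives $\E\big[\innerp{\varepsilon,R_{\psi_k}[u]}\mid u\big]=0$ and $\E\big[\innerp{\varepsilon,R_{\psi_k}[u]}^2\big]\le\sigma^2\,\E\norm{R_{\psi_k}[u]}_\spaceY^2=\sigma^2\lambda_k$, so this term is square-integrable with mean zero. Hence $\E[\innerp{f,R_{\psi_k}[u]}]=\lambda_k\theta_k$, and the SLLN gives $\bbar_{n,M}(k)\to\lambda_k\theta_k$, i.e.\ $\bbar_{n,\infty}=(\lambda_1\theta_1,\dots,\lambda_n\theta_n)^\top$.

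The conclusion~\eqref{Eq:Asyp_theta} is then immediate: the eigenvalues $\lambda_1,\dots,\lambda_n$ are strictly positive, so $\Abar_{n,\infty}=\mathrm{diag}(\lambda_1,\dots,\lambda_n)$ is invertible and $\Abar_{n,\infty}^{-1}\bbar_{n,\infty}=(\theta_1,\dots,\theta_n)^\top=\btheta_n$. The one genuinely delicate step is the noise term $\innerp{\varepsilon,R_{\psi_k}[u]}$ when $\spaceY$ is infinite-dimensional: one must make precise the pairing of the (possibly white) noise $\varepsilon$ with the $\spaceY$-valued random element $R_{\psi_k}[u]$ and justify exchanging the conditional expectation with the series expansion---exactly the place where the second-moment bound~\eqref{ineq:noise_up} of Condition~\ref{assum:noise_up} enters, and no higher moments of the noise are needed; everything else is Kolmogorov's SLLN applied to the finitely many entries of $\Abar_{n,M}$ and $\bbar_{n,M}$.
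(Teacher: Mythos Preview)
Your proof is correct and follows the same approach as the paper, which simply states ``It follows directly from the strong Law of Large Numbers.'' You have filled in the integrability checks and the handling of the noise term that the paper leaves implicit; note also that you correctly obtain $\Abar_{n,\infty}(k,l)=\lambda_k\delta_{kl}$, whereas the paper's displayed $\lambda_n\delta_{kl}$ is evidently a typo.
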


 \subsection{Minimax upper rates}
 We prove next the minimax  upper rates under a condition concerning the fourth moment of $R_\phi[u]$. It constrains the distribution of $u$ and the forward operator $R_\phi$. 
 \begin{assumption}[Fourth-moment condition]\label{assum:4thmom} 
 There exists $\kappa\geq 1$ such that, 
        \begin{equation}\label{ineq:4thmom}
            \frac{\E[\|R_\phi[u]\|_\spaceY^4]}{(\E[\|R_\phi[u]\|_\spaceY^2])^2} = \frac{\E[\|R_\phi[u]\|_\spaceY^4]}{\innerp{\LGbar \phi,\phi}^2_{L^2_{\rho}}}\leq \kappa, \quad \forall \phi\in H_\rho^\beta. 
        \end{equation}
\end{assumption}

The fourth-moment condition holds for Gaussian processes $u$ and linear operators $R_\phi$ such as the integral operator \eqref{eg:integral} and the nonlocal operator \eqref{eg:nonlocal}. In fact, when $u$ is a centered Gaussian process and $R_\phi$ is linear in $u$,  for each $\phi\in L_\rho^2$ and $x\in \calX$, the random variable $R_\phi[u](x)$ is  centered Gaussian and $\E[R^4_\phi[u](x)]= 3 \E[R^2_\phi[u](x)]^2$. Then, by Cauchy-Schwartz inequality, 
$$\Ebracket{ R^2_\phi[u](x)R^2_\phi[u](y)}\leq  \E[R^4_\phi[u](x)]^{1/2}\E[R^4_\phi[u](y)]^{1/2} = 3 \Ebracket{ R^2_\phi[u](x)}\Ebracket{R^2_\phi[u](y)}.$$ Hence, 
  \begin{align*}
        \E[\|R_\phi[u]\|_\spaceY^4] &= \Ebracket{\left(\int_\calX R^2_\phi[u](x)\nu(dx)\right)^2}= \Ebracket{\int_\calX\int_\calX R^2_\phi[u](x)R^2_\phi[u](y)\,\nu(dx)\, \nu(dy)}\\
        &= \int_\calX\int_\calX\Ebracket{ R^2_\phi[u](x)R^2_\phi[u](y)}\,\nu(dx)\, \nu(dy)\\
        &\leq 3 \int_\calX\int_\calX\Ebracket{ R^2_\phi[u](x)}\Ebracket{R^2_\phi[u](y)}\,\nu(dx)\, \nu(dy) = 3 (\E[\|R_\phi[u]\|_\spaceY^2])^2. 
    \end{align*}
That is, the fourth-moment condition holds on $H^\beta_\rho$ for all $\beta\geq 0$ with $\kappa= 3$.

Our main result is the following minimax upper rate. 
\begin{theorem}[Minimax upper rate]\label{thm:L2rho_upper}
Under {\rm Assumptions} {\rm\ref{assum:model}}, {\rm\ref{assump:noise-X-general}}, and {\rm\ref{assum:4thmom}} on the general model \eqref{eq:model_general}, we have the following minimax upper rates for $\beta>0$. 
\begin{itemize}
    \item If the polynomial spectral decay with $r>\frac1 4$ in {\rm Assumption \ref{assump:noise-X-general}} {\rm\ref{assum:polydecay}} is satisfied, we have 
		    \begin{equation}\label{ineq:upper_main}
		\begin{aligned}
			& \limsup_{M\to\infty} \inf_{\widehat \phi\in L^2_\rho}\sup_{\phi_*\in H^\beta_\rho(L)}   \E_{\phi_*} \left[ M^{\frac{2\beta r}{2\beta r+2r+1}} \| \widehat{\phi}-\phi_* \|_{L^2_\rho} ^2 \right] \leq C_{\beta,r,L,a,b,\sigma},
		\end{aligned}
		\end{equation}
	     where $C_{\beta,r,L,a,b,\sigma} =3 \left(\frac{2\sigma^2 }{a}\right)^{\frac{2\beta r}{2\beta r + 2r +1}} (b^\beta L^2)^{\frac{2 r +1}{2\beta r + 2r + 1}} $. 
    \item If the exponential spectral decay with $r>0$ in {\rm Assumption \ref{assump:noise-X-general}} {\rm\ref{assum:expdecay}} is satisfied, we have 
    \begin{equation}\label{ineq:upper_exp}
    \limsup_{M\to\infty} \inf_{\widehat \phi\in L^2_\rho}\sup_{\phi_*\in H^\beta_\rho(L)}   \E_{\phi_*}\left[ M^{\frac{\beta}{\beta+1}} \| \widehat{\phi}-\phi_* \|_{L^2_\rho} ^2 \right] \leq C_{\beta,r,L,a,b,\sigma} 
\end{equation}
with $ C_{\beta,r,L,a,b,\sigma} = 2 \left( \frac{4\sigma^2 e^r}{a(e^r-1)} \right)^{\frac{\beta}{\beta+1} }(b^\beta L^2)^{\frac{1}{\beta+1} } $. 
\end{itemize}
 Here, $\E_{\phi_*}$ is the expectation with respect to the dataset $\{(u^{m},f^m)\}_{m=1}^M$ generated from {\rm Model} \eqref{eq:model_general} with kernel $\phi_*$, and $H^\beta_\rho(L)$ is the spectral Sobolev class in {\rm Definition \ref{def:SSS}}.
\end{theorem}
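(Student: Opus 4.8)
<br>

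<br>

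The plan is to prove the minimax upper rate by analyzing the tamed least squares estimator $\widehat{\phi}_{n,M}$ from Definition \ref{def:tLSE}, optimizing the truncation level $n$ as a function of $M$. The core is a bias–variance decomposition: writing $\phi = \sum_{k\ge 1}\theta_k\psi_k$ and $P_n\phi = \sum_{k\le n}\theta_k\psi_k$, we have the standard split $\E_\phi[\|\widehat{\phi}_{n,M}-\phi\|_{L^2_\rho}^2] = \E_\phi[\|\widehat{\phi}_{n,M}-P_n\phi\|_{L^2_\rho}^2] + \|\phi - P_n\phi\|_{L^2_\rho}^2$. The bias term is controlled directly by Lemma \ref{lemma:SSS}: $\|\phi-P_n\phi\|^2_{L^2_\rho} = \sum_{k>n}\theta_k^2 \le L^2\lambda_{n+1}^\beta$, which is of order $n^{-2\beta r}$ (polynomial case) or $e^{-r\beta n}$ (exponential case). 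The variance term is where the work lies, and it must be shown to be of order $\sigma^2 n^{1+2r}/M$ (polynomial) or $\sigma^2 e^{rn}/M$ (exponential), after which balancing bias against variance in $n$ yields the claimed exponents $\frac{2\beta r}{2\beta r+2r+1}$ and $\frac{\beta}{\beta+1}$, together with the explicit constants.

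First I would treat the variance. On the event $\calA$ where the tLSE coincides with the LSE, $\widehat{\btheta}_{n,M}-\btheta_n = \Abar_{n,M}^{-1}(\bbar_{n,M} - \Abar_{n,M}\btheta_n)$, and I would decompose $\bbar_{n,M} - \Abar_{n,M}\btheta_n$ into a ``model'' part (coming from the tail $\sum_{k>n}\theta_k\psi_k$ of $\phi$ outside $\calH_n$, since $f^m$ carries contributions from the full $\phi$) and a ``noise'' part $\frac{1}{M}\sum_m \big(\innerp{\varepsilon^m, R_{\psi_k}[u^m]}\big)_k$. On $\calA$, $\|\Abar_{n,M}^{-1}\|_{op}$ is bounded by $4/\lambda_n$ (polynomial) or, more delicately, we exploit the componentwise lower bounds $\lambda_k(\Abar_{n,M})>\lambda_k/4$ in the exponential case so that the small eigenvalues are controlled at the scale of their own $\lambda_k$ rather than the worst $\lambda_n$. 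The noise part is handled via Assumption \ref{assump:noise-X-general}\ref{assum:noise_up}: conditioning on the $u^m$, the noise is centered with $\E[\innerp{\varepsilon^m,y}^2]\le\sigma^2\|y\|_\spaceY^2$, so $\E_\phi[\|\text{noise part}\|^2 \mid \{u^m\}] \le \frac{\sigma^2}{M}\mathrm{Tr}(\Abar_{n,M})$, and combined with $\|\Abar_{n,M}^{-1}\|_{op}$ this gives the $\mathrm{Tr}(\LGbar|_{\calH_n})/(\lambda_n M)$-type bound — which is $\sum_{k\le n}\lambda_k/(\lambda_n M)$, of order $n^{1+2r}/M$ in the polynomial case and $e^{rn}/M$ in the exponential case (the geometric sum is dominated by its last term). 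The contribution from the off-event $\calA^c$, where $\widehat{\phi}_{n,M}=0$ so the error is just $\|\phi\|^2_{H} $-bounded, is made negligible by showing $\P_\phi(\calA^c)$ is small.

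The main obstacle is bounding the left-tail probability $\P_\phi(\calA^c) = \P_\phi(\lambda_{\min}(\Abar_{n,M}) \le \lambda_n/4)$ (or the componentwise version), since $\Abar_{n,M}$ is a random matrix whose entries are averages of products $\innerp{R_{\psi_k}[u^m],R_{\psi_l}[u^m]}$ with only a fourth-moment assumption (Assumption \ref{assum:4thmom}) — no sub-Gaussianity, no boundedness. This is precisely where the two technical innovations advertised in the introduction enter: (i) a tight bound on the deviation $\|\Abar_{n,M}-\Abar_{n,\infty}\|$ via a singular-value-decomposition argument that keeps track of how the perturbation interacts with the spectral scales, and (ii) a relaxed PAC-Bayesian inequality converting the fourth-moment control into a usable concentration statement for $\lambda_{\min}$ (resp.\ each $\lambda_k$) of the random normal matrix. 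I expect one would prove a lemma of the form $\P_\phi(\calA^c)\le C\kappa\, n^{?}/M$ (or the analogous exponential-case bound) and then verify that, with the bias-optimal choice of $n\asymp M^{1/(2\beta r+2r+1)}$ (resp.\ $n\asymp \frac{1}{r}\log M$), this probability times the crude $\calA^c$-error bound is of lower order than the main rate. Finally I would assemble the pieces: substitute the optimal $n$, track constants through the geometric/polynomial sums, and read off $C_{\beta,r,L,a,b,\sigma}$ — the factor $3$ in the polynomial case reflecting the three-way split (bias, noise variance, $\calA^c$ remainder) and the use of $\kappa=3$ for the Gaussian-type fourth moment, and the factor $2\sigma^2 e^r/(a(e^r-1))$ in the exponential case coming from the geometric sum $\sum_k e^{rk} = e^{rn}\frac{e^r}{e^r-1}(1-e^{-rn})$ bounded against the lower eigenvalue bound $a e^{-rn}$.
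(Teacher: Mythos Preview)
Your overall strategy (bias--variance split for the tLSE, cutoff probability via PAC-Bayes) matches the paper's, but there is a genuine gap in your variance bound for the noise term. You propose to bound $\E[\|\Abar_{n,M}^{-1}\bar d_{n,M}\|^2\mathbf{1}_\calA]$ by combining $\|\Abar_{n,M}^{-1}\|_{op}$ with the conditional second moment $\E[\|\bar d_{n,M}\|^2\mid\{u^m\}]\le \frac{\sigma^2}{M}\tr(\Abar_{n,M})$, arriving at $\sum_{k\le n}\lambda_k/(\lambda_n M)$. This is wrong in two ways. First, the operator norm must be squared, so the crude bound is actually $\sum_{k\le n}\lambda_k/(\lambda_n^2 M)$; this is exactly what Remark \ref{rmk:looseBd} identifies as too loose for $r\ge 1/2$ --- in the polynomial case it is $O(n^{4r}/M)$, not the required $O(n^{1+2r}/M)$, and balancing against the bias then yields a strictly worse exponent. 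Second, your stated expression with only one power of $\lambda_n^{-1}$ is not what any version of this argument produces; it is neither the crude bound nor the paper's tight bound (and in fact it is \emph{smaller} than the sharp answer $n/(\lambda_n M)$ in the regime $r>1/2$, which should already be a red flag).

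You also misidentify the role of the SVD innovation. It is \emph{not} used to control the deviation $\|\Abar_{n,M}-\Abar_{n,\infty}\|$; that quantity never appears in the paper. Rather, the SVD is applied to $\Abar_{n,M}$ in order to compute $\E[\|\Abar_{n,M}^{-1}\bar d_{n,M}\|^2\mid \Abar_{n,M}]$ directly: writing $\Abar_{n,M}=\tilde U^\top\tilde\Sigma\tilde U$, the rotated noise $(\tilde U\bar d_{n,M})_k$ has conditional second moment $\le \sigma^2\lambda_k(\Abar_{n,M})/M$ because the $k$-th row of $\tilde U$ is an eigenvector of $\Abar_{n,M}$, and this cancels one factor of $\lambda_k^{-1}(\Abar_{n,M})$ to give the tight bound $\sigma^2\sum_k\lambda_k^{-1}(\Abar_{n,M})/M$ (Lemma \ref{Lem:noise_up}). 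On $\calA$ this becomes $4\sigma^2 n/(M\lambda_n)$ (polynomial) or $4\sigma^2\sum_k\lambda_k^{-1}/M$ (exponential) --- precisely the $n^{1+2r}/M$ and $e^{rn}/M$ rates you aimed for but could not reach with the operator-norm approach. As a minor point, the factor $3$ in the polynomial constant is not from $\kappa=3$ or a three-way split; it is $\sup_{x>0}(2x^{1/(2x+1)}+x^{-2x/(2x+1)})=3$, arising from optimizing $n$, and the final constants contain no $\kappa$ because the $\kappa$-dependent model-tail and cutoff terms are of lower order.
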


We prove the minimax upper rate by showing that the tLSE defined in \eqref{eq:tLSE} attains the convergence rate. Our analysis implements the classical bias-variance trade-off framework in three main steps:
\begin{itemize}
	\item \emph{Error Decomposition}. We split the estimation error into a bias (approximation error) term and a variance term. The bias decays as the projection dimension $n$ increases, at a rate $O(\lambda_n^{\beta})$ dictated by the function space $H^\beta_\rho(L)$, which is analogous to the $O(n^{-2\beta})$ rate in classical regression.   
    \item \emph{Variance Control.} We show that the variance term is bounded by $O(n^{1+2r}/M)$ for polynomial spectral decay and $O(e^{rn}/M)$ for exponential spectral decay, paralleling the \(O(n/M)\) rate in classical regression. This variance is further decomposed into a sampling error component and a negligible term arising from the event \(\mathcal{A}^c\) (the cutoff event for small eigenvalues), with each part controlled by Lemma \ref{lemma:samplingError} and Lemma \ref{lemma:prob_cutoff}, respectively.
    \item \emph{Optimal Dimension Selection.} Finally, we select the projection dimension $n_M$ to balance the bias and variance, thereby achieving the optimal rate. 
\end{itemize}
Our technical innovations relative to \cite{wang2023optimal} are twofold. First, instead of the standard approach that bounds the variance via the operator norm of the normal matrix (which leads to a suboptimal estimate), we derive a tight bound for the sampling error using a singular value decomposition. Second, we obtain two refined left-tail probability bounds for the eigenvalues of the normal matrix by leveraging its trace, which allows us to control the cutoff probability $\P(\mathcal{A}^c)$ using only a fourth-moment condition, without imposing additional boundedness on the basis functions. We state these results in Lemma \ref{lemma:samplingError} and \ref{lemma:prob_cutoff} below and postpone their proofs to the Appendix.

\begin{lemma}[Sampling error]\label{lemma:samplingError}
Under {\rm Assumptions} {\rm\ref{assum:model}}, {\rm\ref{assump:noise-X-general}}, and {\rm\ref{assum:4thmom}} on the general model \eqref{eq:model_general}, conditional on the event $\calA$, the sampling error of the tLSE in \eqref{eq:tLSE} satisfies 
\begin{align*}
    \E[\|\Abar_{n,M}^{-1}\bbar_{n,M}- \btheta_{n}^*\|^2\mathbf{1}_\mathcal{A}] &\leq \frac{16\kappa L^2}{ M}\lambda_n^{\beta-1}\sum_{k=1}^n\lambda_k +  \begin{cases}
 	\frac{4\sigma^2}{M} \frac{n}{\lambda_n},  & \text{if polynomial decay;} \\
 	\frac{4\sigma^2}{M} \sum_{k=1}^n\lambda_k^{-1}, & \text{if exponential decay,}
 \end{cases}
\end{align*}
	where $\Abar_{n,M}$ and $\bbar_{n,M}$ are defined in \eqref{eq:Ab} and $\btheta_n^*$ in \eqref{Eq:Asyp_theta}. 
\end{lemma}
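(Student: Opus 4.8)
The plan is to decompose $\Abar_{n,M}^{-1}\bbar_{n,M}-\btheta_n$, which is well defined on $\mathcal{A}$ since there $\Abar_{n,M}$ is invertible, into a noise-driven term and a term coming from the unresolved tail of the true kernel, bound the two separately, and check that their cross contribution vanishes after conditioning on the inputs. Writing the true kernel as $\phi=\sum_{k\ge1}\theta_k\psi_k$ and $\phi_n^\perp=\sum_{k>n}\theta_k\psi_k$, I would substitute $f^m=R_\phi[u^m]+\varepsilon^m$ into $\bbar_{n,M}$ and use linearity of $R_\cdot[u]$ to get $\bbar_{n,M}=\Abar_{n,M}\btheta_n+\mathbf c+\mathbf d$, where $\mathbf c(k)=\frac1M\sum_m\innerp{R_{\psi_k}[u^m],R_{\phi_n^\perp}[u^m]}_\spaceY$ and $\mathbf d(k)=\frac1M\sum_m\innerp{\varepsilon^m,R_{\psi_k}[u^m]}$, so that on $\mathcal{A}$ one has $\Abar_{n,M}^{-1}\bbar_{n,M}-\btheta_n=\Abar_{n,M}^{-1}(\mathbf c+\mathbf d)$. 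Conditioning on $\mathcal{G}:=\sigma(u^1,\dots,u^M)$, the factors $\Abar_{n,M}$, $\mathbf 1_\mathcal{A}$, $\mathbf c$ are $\mathcal{G}$-measurable while $\E[\mathbf d\mid\mathcal{G}]=0$ by Condition \ref{assum:noise_up} (noise independent of $u$ and centered), so the cross term drops and it remains to bound $\E[\|\Abar_{n,M}^{-1}\mathbf c\|^2\mathbf 1_\mathcal{A}]$ and $\E[\|\Abar_{n,M}^{-1}\mathbf d\|^2\mathbf 1_\mathcal{A}]$ separately.

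For the noise term I would use the singular value decomposition as announced. Stacking the maps $\mathbf a\mapsto R_{\phi_{\mathbf a}}[u^m]$, with $\phi_{\mathbf a}:=\sum_{k\le n}a_k\psi_k$, over $m=1,\dots,M$ into one operator $\mathcal R:\R^n\to\spaceY^{\oplus M}$ gives $\mathcal R^*\mathcal R=M\Abar_{n,M}$ and $\mathcal R^*\boldsymbol\varepsilon=M\mathbf d$ with $\boldsymbol\varepsilon=(\varepsilon^1,\dots,\varepsilon^M)$, so on $\mathcal{A}$, $\Abar_{n,M}^{-1}\mathbf d=(\mathcal R^*\mathcal R)^{-1}\mathcal R^*\boldsymbol\varepsilon=\mathcal R^\dagger\boldsymbol\varepsilon$. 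Expanding in the thin SVD $\mathcal R=\sum_i s_i\mathbf w_i\mathbf v_i^*$ yields $\|\mathcal R^\dagger\boldsymbol\varepsilon\|^2=\sum_i s_i^{-2}\innerp{\mathbf w_i,\boldsymbol\varepsilon}^2$, and since the blocks of $\boldsymbol\varepsilon$ are independent, centered, and satisfy $\E[\innerp{\varepsilon^m,y}^2]\le\sigma^2\|y\|_\spaceY^2$, one gets $\E[\innerp{\mathbf w_i,\boldsymbol\varepsilon}^2\mid\mathcal{G}]\le\sigma^2\|\mathbf w_i\|^2=\sigma^2$; hence $\E[\|\Abar_{n,M}^{-1}\mathbf d\|^2\mid\mathcal{G}]\le\sigma^2\sum_i s_i^{-2}=\frac{\sigma^2}{M}\tr(\Abar_{n,M}^{-1})$ (equivalently, one shows $\E[\mathbf d\mathbf d^\top\mid\mathcal{G}]\preceq\frac{\sigma^2}{M}\Abar_{n,M}$ and takes traces against $\Abar_{n,M}^{-2}$). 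Finally, on $\mathcal{A}$ I would bound the trace by $n/\lambda_{\min}(\Abar_{n,M})<4n/\lambda_n$ in the polynomial case and by $\sum_{k\le n}\lambda_k(\Abar_{n,M})^{-1}<4\sum_{k\le n}\lambda_k^{-1}$ in the exponential case, which gives the second term of the claim.

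For the tail term, on $\mathcal{A}$ one has $\lambda_{\min}(\Abar_{n,M})>\lambda_n/4$ in both regimes, so $\|\Abar_{n,M}^{-1}\mathbf c\|^2\le16\lambda_n^{-2}\|\mathbf c\|^2$ and it suffices to estimate $\E\|\mathbf c\|^2$. Since $\E[\mathbf c(k)]=\innerp{\LGbar\psi_k,\phi_n^\perp}_{L^2_\rho}=\lambda_k\innerp{\psi_k,\phi_n^\perp}_{L^2_\rho}=0$ for $k\le n$, the i.i.d.\ sum gives $\E[\mathbf c(k)^2]=\frac1M\E[\innerp{R_{\psi_k}[u],R_{\phi_n^\perp}[u]}_\spaceY^2]$, and two Cauchy--Schwarz steps (first in $\spaceY$, then in expectation) combined with the fourth-moment bound of Assumption \ref{assum:4thmom} (valid since $\psi_k,\phi_n^\perp\in H^\beta_\rho$) yield $\E[\mathbf c(k)^2]\le\frac{\kappa}{M}\lambda_k\innerp{\LGbar\phi_n^\perp,\phi_n^\perp}_{L^2_\rho}$; by Lemma \ref{lemma:SSS}, $\innerp{\LGbar\phi_n^\perp,\phi_n^\perp}_{L^2_\rho}=\sum_{j>n}\lambda_j\theta_j^2\le\lambda_n\sum_{j>n}\theta_j^2\le L^2\lambda_n^{\beta+1}$. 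Summing over $k\le n$ and multiplying by $16\lambda_n^{-2}$ then produces the first term $\frac{16\kappa L^2}{M}\lambda_n^{\beta-1}\sum_{k\le n}\lambda_k$, and adding the two contributions finishes the argument.

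The main obstacle is the noise term: the naive bound $\|\Abar_{n,M}^{-1}\mathbf d\|^2\le\|\Abar_{n,M}^{-1}\|^2\|\mathbf d\|^2$, of order $\lambda_n^{-2}\frac{\sigma^2}{M}\sum_k\lambda_k$, is too lossy — in the polynomial regime it would give $n^{4r}/M$ instead of the required $n^{1+2r}/M$ — so one must replace the operator-norm factor by the trace $\tr(\Abar_{n,M}^{-1})$ via the SVD (or conditional-covariance) argument above. A secondary subtlety is that in the exponential case one has to exploit that the event $\mathcal{A}$ pins down \emph{every} eigenvalue $\lambda_k(\Abar_{n,M})$, not just the smallest one, in order to arrive at the $\sum_k\lambda_k^{-1}$ form rather than the coarser $n/\lambda_n$.
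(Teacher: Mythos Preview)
Your proposal is correct and follows essentially the same route as the paper: the same decomposition $\bbar_{n,M}=\Abar_{n,M}\btheta_n+\mathbf c+\mathbf d$, the same Cauchy--Schwarz/fourth-moment/Lemma~\ref{lemma:SSS} chain for the tail term, and the same ``use the SVD so that the noise bound produces $\tfrac{\sigma^2}{M}\tr(\Abar_{n,M}^{-1})$ rather than the lossy $\lambda_n^{-2}\|\mathbf d\|^2$'' idea for the noise term. The only cosmetic differences are that you run the SVD on the stacked operator $\mathcal R$ (equivalently, argue via $\E[\mathbf d\mathbf d^\top\mid\mathcal G]\preceq\tfrac{\sigma^2}{M}\Abar_{n,M}$), whereas the paper diagonalizes $\Abar_{n,M}$ directly in Lemma~\ref{Lem:noise_up}; and you make explicit the conditioning argument that kills the cross term, which the paper leaves implicit when it writes $\E\|\Abar_{n,M}^{-1}(\bar c+\bar d)\|^2\mathbf 1_{\mathcal A}\le \E\|\Abar_{n,M}^{-1}\bar c\|^2\mathbf 1_{\mathcal A}+\E\|\Abar_{n,M}^{-1}\bar d\|^2\mathbf 1_{\mathcal A}$.
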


\begin{lemma}[Probability of cutoff]\label{lemma:prob_cutoff}
Under {\rm Assumptions} {\rm\ref{assum:model}} and {\rm\ref{assum:4thmom}} on the general model \eqref{eq:model_general}, the probability of cutoff $\P(\calA^c)$, under polynomial or exponential spectral decay, is controlled by the following left-tail probability bounds: 
\begin{align}\label{ineq:LeftProb4}
{\P(\calA^c)\leq } 
\begin{cases} 
 	\frac{\kappa\lambda_1^2}{M} + \exp\left(n\log\left(\frac{20(\lambda_1+1)}{\lambda_n}\right) - \frac{ M\lambda_n}{4\kappa(\lambda_n+\lambda_1)}\right)\text{; or}\\ 
 	\frac{n\kappa\lambda_1^2}{M} + \sum_{k=1}^n\exp\left(k \log\left(\frac{20(\lambda_1+1)}{\lambda_k}\right) - \frac{ M\lambda_k}{4\kappa(\lambda_k+\lambda_1)}\right), \text{ respectively.}
 \end{cases}	
\end{align}
\end{lemma}

\begin{proof}[Proof of Theorem \ref{thm:L2rho_upper}]
It suffices to prove that tLSE defined in \eqref{eq:tLSE}  converges at the minimax upper rate. We start from the variance-bias decomposition: 
\begin{align*}
	\E_{\phi_*}[ \| \widehat{\phi}_{n,M}-\phi_* \|_{L^2_\rho} ^2 ] &= \E[ \| \widehat{\phi}_{n,M}-\phi^*_{\calH_n} \|_{L^2_\rho} ^2 ] + \|\phi^*_{\calH_n^\bot}\|_{L^2_\rho}^2 
	= \E[\|\widehat \btheta_{n,M}- \btheta_{n}^*\|^2] + \|\phi^*_{\calH_n^\bot}\|_{L^2_\rho}^2\,, 
\end{align*}
where $\phi^*_{\calH_n}=\sum_{k=1}^n\theta_k^*\psi_k$ and $\phi^*_{\calH_n^\bot}=\sum_{k=n+1}^\infty\theta_k^*\psi_k$ are projections of the true kernel $\phi_*= \sum_{k=1}^\infty \theta_k^* \psi_k$ on $\calH_n$ and its orthogonal complement $\calH_n^{\bot}$, respectively, and $\btheta_{n}^*=(\theta_1^*,\ldots, \theta_n^*)^\top$. 

The bias term (i.e., the 2nd term) is bounded above by the smoothness of the true kernel in $H_\rho^\beta(L)$. That is, by Lemma \ref{lemma:SSS}, we have 
\begin{equation*}
	\|\phi^*_{\calH_n^\bot}\|_{L^2_\rho}^2=\sum_{k={n}+1}^\infty |\theta_k^*|^2\leq L^2 \lambda_{n+1}^\beta\leq L^2 \lambda_{n}^\beta\,. 
\end{equation*}
Next, we split the variance term into two parts: 
\begin{align*}
        \E[\|\widehat \btheta_{n,M}- \btheta_{n}^*\|^2]&=\E[\|\Abar_{n,M}^{-1}\bbar_{n,M}- \btheta_{n}^*\|^2\mathbf{1}_\mathcal{A}] + \P(\mathcal{A}^c)\|\btheta_{n}^*\|^2\\
        &\leq \E[\|\Abar_{n,M}^{-1}\bbar_{n,M}- \btheta_{n}^*\|^2\mathbf{1}_\mathcal{A}] + \P(\mathcal{A}^c)\lambda_1^\beta L^2.  
    \end{align*}
    Here, the first term comes from the sampling error, the second term is the probability of cutoff error, and they are bounded by Lemma \ref{lemma:samplingError} and Lemma \ref{lemma:prob_cutoff}. 

Lastly, we select $n$ adaptive to $M$ according to the spectral decay. 

\vspace{2mm}\noindent\textbf{Polynomial spectral decay.} Consider first the case where $a n^{-2r}\leq \lambda_n\leq bn^{-2r}$ and $\mathcal{A}=\{\lambda_{\min} (\Abar_{n,M})>\lambda_n/4 \}$. Note that $\lambda_n^{\beta-1}\sum_{k=1}^n\lambda_k\leq n\lambda_n^{-1} (\lambda_1\lambda_n^\beta)= n\lambda_n^{-1}o(1) $ since $\lambda_n\asymp n^{-2r}$. Lemma \ref{lemma:samplingError} implies 
\begin{align*}
    \E[\|\Abar_{n,M}^{-1}\bbar_{n,M}- \btheta_{n}^*\|^2\mathbf{1}_\mathcal{A}] &\leq  \frac{4\sigma^2n}{M\lambda_n} + \frac{16\kappa L^2}{ M}\lambda_n^{\beta-1}\sum_{k=1}^n\lambda_k \\
    & \leq (4\sigma^2+o(1)) \frac{n}{M\lambda_n}    \leq  \left(\frac{4\sigma^2}{a}+o(1)\right)\frac{n^{1+2r}}{M}. 
\end{align*}

Recall that the bias term satisfies $\|\phi^*_{\calH_n^\bot}\|_{L^2_\rho}^2 \leq L^2\lambda_n^\beta\leq b^\beta L^2 n^{-2\beta r}$, we select the optimal $n$ by minimizing the trade-off function $
g(n):= \frac{4\sigma^2 n^{1+2r}}{aM}+ \frac{b^\beta L^2}{n^{2\beta r}}, 
$ 
which gives 
$ n_M=\left\lceil\left( \frac{ab^\beta \beta r L^2 }{2\sigma^2(1+2r)} M\right)^{\frac{1}{2\beta r+2r+1}}\right\rceil$. 
 
 Then, we get 
\begin{align*}
		&      \E[\|\Abar_{n,M}^{-1}\bbar_{n,M}- \btheta_{n}^*\|^2\mathbf{1}_\mathcal{A}] + \|\phi^*_{\calH_n^\bot}\|_{L^2_\rho}^2
\leq  \left(\frac{4\sigma^2}{a}+o(1)\right)\frac{n_M^{1+2r}}{M}+b^\beta L^2   n^{-2\beta r}_{M} \\
	\leq\, &  \left[ \frac{4\sigma^2}{a}\left( \frac{ab^\beta \beta r L^2 }{2\sigma^2(1+2r)} \right)^{\frac{1+2r}{2\beta r+2r+1}}  + b^\beta L^2 \left( \frac{ab^\beta \beta r L^2 }{2\sigma^2(1+2r)} \right)^{\frac{-2\beta r}{2\beta r+2r+1}} +o(1)\right]  M^{-\frac{2\beta r}{2\beta r+2r+1}} \\
     =\, & \left[\left(\frac{2\sigma^2}{a}\right)^{\frac{2\beta r}{2\beta r + 2r +1}} (b^\beta L^2)^{\frac{2 r +1}{2\beta r + 2r + 1}} h\left(\frac{\beta r}{1+2r}\right)+o(1)\right] M^{-\frac{2\beta r}{2\beta r+2r+1}} \\
     \leq \, &   (C_{\beta,r,L,a,b,\sigma}+o(1)) M^{-\frac{2\beta r}{2\beta r+2r+1}}, 
\end{align*}
where $h(x) = 2 x^{\frac{1}{2x+1}} + x^{-\frac{2x}{2x+1}}$ and $C_{\beta,r,L,a,b,\sigma} =3 \left(\frac{2\sigma^2}{a}\right)^{\frac{2\beta r}{2\beta r + 2r +1}} (b^\beta L^2)^{\frac{2 r +1}{2\beta r + 2r + 1}} $ by the fact that $\sup_{x>0} h(x) =  h(1)=3$ since 
$
h'(x) = -\frac{2\log x}{(2x+1)^2}  h(x)\, \begin{cases}
	>0, & \text{ if } 0<x<1; \\
	<0, & \text{ if } x>1. 
\end{cases}
$

 Meanwhile,  the probability of cutoff $\P(\calA^c)$ with $n=n_M$ is negligible compared to the above rate of $ M^{-\frac{2\beta r}{2\beta r+2r+1}}$. In fact, Lemma \ref{lemma:prob_cutoff} shows that 
 \[\P(\calA^c ) \leq  	\frac{\kappa\lambda_1^2}{M} + \exp\left(n\log\left(\frac{20(\lambda_1+1)}{\lambda_n}\right) - \frac{ M\lambda_n}{4\kappa(\lambda_n+\lambda_1)}\right)
 \]
 for all $n$. This left-tail probability is of order $O(\frac{1}{M})$. The exponential term with $n=n_M$ decays faster than $\frac{1}{M}$ since its two exponent terms are 
$\frac{M\lambda_{n_M}}{4\kappa(\lambda_{n_M}+\lambda_1)} \geq \frac{a M n_M^{-2r}}{8\kappa\lambda_1} \asymp M^{\frac{2\beta r + 1}{2\beta r + 2r +1}}
$ 
and $
n_M\log\left(\frac{20(\lambda_1+1)}{\lambda_{n_M}}\right) \leq n_M \log\left(\frac{20(\lambda_1+1)}{a n_M^{-2r}}\right) = O\left(M^{\frac{1}{2\beta r + 2r +1}}\log M\right)
$. 

As a result, $\limsup_{M\to\infty} \sup_{\phi_*\in H^\beta_\rho(L)}   \E_{\phi_*}[ M^{\frac{2\beta r}{2\beta r+2r+1}} \| \widehat{\phi}_{n_M}-\phi_{*} \|_{L^2_\rho} ^2 ]\leq  C_{\beta,r,L,a,b,\sigma}$ 
and the upper bound in \eqref{ineq:upper_main} follows.

\vspace{2mm}\noindent\textbf{Exponential spectral decay.} Consider next the case where $a\exp(-rn)\leq\lambda_n\leq b\exp(-rn)$ and $\mathcal{A} = \{\lambda_k(\Abar_{n,M})>\lambda_k/4,\forall k\leq n\}$. Note that $\lambda_n^{\beta}\sum_{k=1}^n\lambda_k\leq  \lambda_n^{\beta} b \sum_{k=1}^n e^{-rk}\leq \lambda_n^{\beta}\, \frac{b e^{-r}}{1-e^{-r}}$ converges to $0$ as $n\to \infty$, and $4\sigma^2 a^{-1} \sum_{k=1}^n e^{rk}= 4\sigma^2 a^{-1}\,\frac{e^{r(n+1)}-e^r}{e^r-1}< 4\sigma^2 a^{-1}\,\frac{e^{r(n+1)}}{e^r-1}=:c_1 e^{rn}$. Then, Lemma \ref{lemma:samplingError} implies  
\begin{align*}
    \E[\|\Abar_{n,M}^{-1}\bbar_{n,M}- \btheta_{n}^*\|^2\mathbf{1}_\mathcal{A}] &\leq \frac{16\kappa L^2}{ M}\lambda_n^{\beta-1}\sum_{k=1}^n\lambda_k + \frac{4\sigma^2}{M}\sum_{k=1}^n\lambda_k^{-1} 
    \leq  (c_1+o(1))  \frac{e^{rn}}{M}. 
\end{align*} 

Recall that the bias term is bounded by $\|\phi^*_{\calH_n^\bot}\|_{L^2_\rho}^2 \leq L^2\lambda_{n+1}^\beta\leq b^\beta L^2 e^{-\beta r(n+1)}$. Then, we select $n$ by minimizing the trade-off function $g(n):= c_1  \frac{e^{rn}}{M} + b^\beta L^2 e^{-\beta r(n+1)}$. Here, we regard $n$ and $n+1$ as the same variable $x\in[n,n+1]$. 
The solution is 
\[
n_M = \left\lfloor\frac{1}{\beta r + r}\log\left(c_1^{-1} b^\beta \beta L^2 M \right)\right\rfloor =  \frac{\log M} {\beta r+r} + O(1) .
\] 
Then, noting that $e^{ rn_M} \leq (c_1^{-1} b^\beta \beta L^2 M)^{\frac{1}{\beta+1}}  $ and $e^{-\beta r(n_M+1)} \leq (c_1^{-1} b^\beta \beta L^2 M)^{-\frac{\beta}{\beta+1}} $, we have 
\begin{align*}
    &\E[\|\Abar_{n,M}^{-1}\bbar_{n,M}- \btheta_{n}^*\|^2\mathbf{1}_\mathcal{A}] + \|\phi^*_{\calH_n^\bot}\|_{L^2_\rho}^2
    \leq  (c_1+o(1)) \frac{e^{rn_M} }{M} + b^\beta L^2 e^{- \beta r(n_M+1)}  \\
    \leq & \left( c_1 (c_1^{-1} b^\beta \beta L^2 )^{\frac{1}{\beta+1}}  +   b^\beta L^2  (c_1^{-1} b^\beta \beta L^2)^{-\frac{\beta}{\beta+1}} \right) M^{-\frac{\beta}{\beta+1}} \\
     \leq  & c_1^{\frac{\beta}{\beta+1} }(b^\beta L^2)^{\frac{1}{\beta+1} } (\beta^{\frac{1}{\beta+1} } + \beta^{-\frac{\beta}{\beta+1} } )  M^{-\frac{\beta}{\beta+1}}
     \leq C_{\beta,r,L,a,b,\sigma}  M^{-\frac{\beta}{\beta+1}},
\end{align*}
where $ C_{\beta,r,L,a,b,\sigma} = 2 c_1^{\frac{\beta}{\beta+1} }(b^\beta L^2)^{\frac{1}{\beta+1} } $ 
since $\sup_{\beta>0}(\beta^{\frac{1}{\beta+1} } + \beta^{-\frac{\beta}{\beta+1} } ) = 2$. 
The probability of cutoff $\P(\calA^c)$ with $n=n_M$ is negligible compared to the rate $M^{-\frac{\beta}{\beta+1}}$. In fact, by the second part of Lemma \ref{lemma:prob_cutoff},
\begin{align*}
    \P\left( \calA^c\right)&\leq \ \frac{n\kappa\lambda_1^2}{M} \ + \sum_{k=1}^n \exp{\left(k\log\left(\frac{20(\lambda_1+1)}{\lambda_k}\right) - \frac{ M\lambda_k}{4\kappa(\lambda_k+\lambda_1)}\right)}\\
    &\leq \frac{n\kappa\lambda_1^2}{M} \ + n \exp{\left(n\log\left(\frac{20(\lambda_1+1)}{\lambda_n}\right) - \frac{ M\lambda_n}{4\kappa(\lambda_n+\lambda_1)}\right)}.
\end{align*}
When $n=n_M = \frac{\log M}{\beta r+r} + O(1)$, the exponential term decays faster than $\frac{n_M}{M}$ since its positive exponent term, $n_M\log\left(\frac{20(\lambda_1+1)}{\lambda_{n_M}}\right)=O(n_M^2)=O((\log M)^2)$, is less significant than its negative exponent term $\frac{M\lambda_{n_M}}{4\kappa(\lambda_{n_M}+\lambda_1)}\geq \frac{aM\exp(-rn_M)}{8\kappa\lambda_1} \asymp M^{\frac{\beta}{\beta+1}}$. Therefore, $\P\left( \calA^c\right) =  O\left(\frac{n_M}{M}\right) = O\left(\frac{\log M}{M}\right)$ is negligible compared to $M^{-\frac{\beta}{\beta+1}}$. 

As a result, $\limsup_{M\to\infty} \sup_{\phi_*\in H^\beta_\rho(L)}   \E_{\phi_*}[ M^{\frac{\beta}{\beta+1}} \| \widehat{\phi}_{n_M}-\phi_{*} \|_{L^2_\rho} ^2 ]\leq  C_{\beta,r,L,a,b,\sigma}$, which gives the upper bound in \eqref{ineq:upper_exp}.
\end{proof}

\subsection{Probability of cutoff}\label{sec:prob_cutoff}
We prove the two left-tail probability inequalities in Lemma \ref{lemma:prob_cutoff} by the following lemma and its corollary. 
\begin{lemma}[Left-tail probability of the smallest eigenvalue]\label{Lem:LeftProb}
  Under {\rm Assumptions \ref{assum:model}} and {\rm \ref{assum:4thmom}} on the general model \eqref{eq:model_general}, the left-tail probability of the smallest eigenvalue $\widehat{\lambda}_{\min}:=\lambda_{\min}(\Abar_{n,M})$ of the empirical normal matrix $\Abar_{n,M}$ defined in \eqref{eq:Ab} satisfies
    \begin{equation}\label{ineq:LeftProb}
    \begin{aligned}
        \P\left\{ \widehat{\lambda}_{\min}\leq \frac{3-\varepsilon}{8} \lambda_n\right\}\leq& \ \frac{\kappa\lambda_1^2}{M} + \exp\left(n\log\bigg(\frac{20(\lambda_1+1)}{\lambda_n}\bigg) - \frac{\varepsilon M\lambda_n}{4\kappa(\lambda_n+\lambda_1)}\right)
    \end{aligned}
    \end{equation}
    for all $n\geq 1$. In particular, when $\varepsilon = 1$, one has the first inequality in \eqref{ineq:LeftProb4}. 
\end{lemma}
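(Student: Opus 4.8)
The plan is to write $\lambda_{\min}(\Abar_{n,M})=\min_{\theta\in S^{n-1}}\theta^{\top}\Abar_{n,M}\theta$ and to control this minimum \emph{uniformly} over the sphere by a change-of-measure (relaxed PAC-Bayesian) argument that uses only the fourth-moment hypothesis. Decompose $\Abar_{n,M}=\frac1M\sum_{m=1}^{M}M^{m}$ into i.i.d.\ positive semidefinite matrices $M^{m}(k,l)=\innerp{R_{\psi_k}[u^m],R_{\psi_l}[u^m]}_{\spaceY}$; by Lemma~\ref{Lem:projEst}, $\E[M^m]=\mathrm{diag}(\lambda_1,\dots,\lambda_n)$. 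For a unit vector $\theta$, linearity of $R_{\phi}[u]$ in $\phi$ gives $\theta^{\top}M^{m}\theta=\|R_{\phi_\theta}[u^m]\|_{\spaceY}^{2}\ge 0$ with $\phi_\theta=\sum_k\theta_k\psi_k$ (a finite eigen-expansion, hence in $H^\beta_\rho$, so Assumption~\ref{assum:4thmom} applies) and $\E[\theta^{\top}M^m\theta]=\mu_\theta:=\sum_k\lambda_k\theta_k^2\in[\lambda_n,\lambda_1]$. Since $\|R_{\phi_\theta}[u]\|_{\spaceY}^2\ge 0$, the negative deviation $\mu_\theta-\|R_{\phi_\theta}[u]\|_{\spaceY}^2$ is bounded above by $\mu_\theta$; combining $e^{-x}\le1-x+x^2/2$ for $x\ge0$, then $1+y\le e^y$, with $\E[\|R_{\phi_\theta}[u]\|_{\spaceY}^4]\le\kappa\mu_\theta^2$ yields the one-sided sub-exponential estimate $\E\exp\!\big(t(\mu_\theta-\|R_{\phi_\theta}[u]\|_{\spaceY}^2)\big)\le\exp(\tfrac12\kappa\mu_\theta^2 t^2)$ for all $t\ge0$. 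This is precisely the step a matrix Chernoff/Bernstein argument cannot supply: for Gaussian inputs $\|R_{\phi_\theta}[u]\|_{\spaceY}^2$ is heavy-tailed and $M^m$ admits no almost-sure bound, whereas $-\|R_{\phi_\theta}[u]\|_{\spaceY}^2$ is bounded above ``for free.''

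To go from a fixed direction to the whole sphere I would invoke the Donsker--Varadhan/PAC-Bayes inequality with a prior $\pi$ on $S^{n-1}$ and, for each target direction $\theta_0$, a localized posterior $\rho$ supported on a spherical cap of radius $\delta$ about $\theta_0$: on an event of probability at least $1-\alpha$, simultaneously for all $\rho$, one gets $\E_{\theta\sim\rho}[\mu_\theta-\bar f_\theta]\le\frac1{tM}\big(\kl{\rho,\pi}+\log\tfrac1\alpha\big)+\tfrac12 t\kappa\lambda_1^2$, where $\bar f_\theta=\frac1M\sum_m\|R_{\phi_\theta}[u^m]\|_{\spaceY}^2$, the cumulant term uses the per-direction estimate above together with $\sup_\theta\mu_\theta\le\lambda_1$, and $\kl{\rho,\pi}$ is of order $n\log(1/\delta)$ since the cap carries $\pi$-mass of order $\delta^{\,n-1}$. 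The $\rho$-average differs from the value at $\theta_0$ only multiplicatively by $1-O(\delta^2)$, up to an additive error of order $\tfrac{\delta^2}{n}\tr(\Abar_{n,M})$ --- the $1/n$ because the cap spreads its orthogonal mass over $n-1$ directions --- so the argument reduces to an upper bound on $\tr(\Abar_{n,M})$. (Equivalently, one could phrase the reduction through Weyl's inequality $\lambda_{\min}(\Abar_{n,M})\ge\lambda_n-\sup_\theta\big(\theta^{\top}\E[M^1]\theta-\theta^{\top}\Abar_{n,M}\theta\big)$ and bound the one-sided supremum the same way.)

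For the trace I would use Chebyshev's inequality: $\E[\tr\Abar_{n,M}]=\sum_k\lambda_k\le n\lambda_1$, and $\var(\tr\Abar_{n,M})\le\frac1M\E[(\tr M^1)^2]\le\frac nM\sum_k\E[\|R_{\psi_k}[u]\|_{\spaceY}^4]\le\frac nM\kappa\sum_k\lambda_k^2\le\frac{n^2\kappa\lambda_1^2}{M}$ by Cauchy--Schwarz and Assumption~\ref{assum:4thmom} (note this needs no sup-norm control of the $\psi_k$). Hence $\P\{\tr\Abar_{n,M}>n(\lambda_1+1)\}\le\kappa\lambda_1^2/M$, and on the complement $\tr\Abar_{n,M}\le n(\lambda_1+1)$. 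This bad event contributes the additive $\kappa\lambda_1^2/M$ term in \eqref{ineq:LeftProb}, and the bound $n(\lambda_1+1)$ is what makes $\lambda_1+1$ appear inside the logarithm of the exponential term.

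Finally I would assemble: on the intersection of the good trace event and the PAC-Bayes event, and since $\mu_{\theta_0}\ge\lambda_n$, the displayed bound forces $\theta_0^{\top}\Abar_{n,M}\theta_0\ge\lambda_n-(\text{RHS})$ for every $\theta_0$ at once. Choosing the cap radius $\delta\asymp\big(\lambda_n/(\lambda_1+1)\big)^{1/2}$ makes the modulus-of-continuity error a small fraction of $\lambda_n$ and turns $\kl{\rho,\pi}$ into a term of size $O\!\big(n\log((\lambda_1+1)/\lambda_n)\big)$; a suitable choice of the cumulant parameter $t$ and of the confidence level $\alpha$ then balances the residual slack against the spectral room $\lambda_n-\tfrac{3-\varepsilon}{8}\lambda_n=\tfrac{5+\varepsilon}{8}\lambda_n$, with the extra $\tfrac{\varepsilon}{8}\lambda_n$ (obtained by lowering the threshold from $\tfrac38\lambda_n$) allocated entirely to the confidence term, producing the negative exponent $-\tfrac{\varepsilon M\lambda_n}{4\kappa(\lambda_n+\lambda_1)}$ and fixing the numerical constant $20$. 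The ``$\forall\rho$'' quantifier covers all directions simultaneously, so adding the two failure probabilities gives \eqref{ineq:LeftProb}, and $\varepsilon=1$ recovers \eqref{ineq:LeftProb4}, consistently with the cutoff threshold $\lambda_n/4$ in $\calA$. I expect this last step to be the main obstacle: one must calibrate $\delta$, $t$ and $\alpha$ so that the competition between the modulus-of-continuity error (governed by $\tr\Abar_{n,M}\lesssim n(\lambda_1+1)$) and the concentration gap (governed by $\lambda_n$) yields the stated, essentially sharp, dependence on $\lambda_n$ and $\lambda_1$ rather than a lossier $\lambda_n^2/\lambda_1^2$.
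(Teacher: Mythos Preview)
Your overall architecture matches the paper's proof closely: PAC-Bayes on the sphere with spherical-cap posteriors, the one-sided MGF bound $\E[e^{-t\|R_{\phi_\theta}[u]\|_\spaceY^2}]\le\exp(-t\mu_\theta+\tfrac{\kappa t^2}{2}\mu_\theta^2)$ from the fourth-moment assumption, Chebyshev for the trace (giving the additive $\kappa\lambda_1^2/M$), and cap radius $\gamma\asymp(\lambda_n/(\lambda_1+1))^{1/2}$. The paper also treats $n=1$ by a direct Chernoff bound, which you do not mention but which is routine.

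The genuine gap is exactly the one you flag but do not close. Centering at the direction-dependent mean $\mu_\theta$ and then invoking $\sup_\theta\mu_\theta\le\lambda_1$ in the cumulant term, as in your displayed inequality $\E_{\theta\sim\rho}[\mu_\theta-\bar f_\theta]\le\frac1{tM}(\kl{\rho,\pi}+\log\tfrac1\alpha)+\tfrac12 t\kappa\lambda_1^2$, unavoidably produces the lossy exponent $\asymp -M\lambda_n^2/(\kappa\lambda_1^2)$: since $t$ is fixed \emph{before} the PAC-Bayes step, you cannot re-optimize it per direction, and the single $t$ that handles the worst variance $\kappa\lambda_1^2$ must then be compared against the smallest mean $\lambda_n$. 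No choice of $\delta,t,\alpha$ repairs this within your centering.

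The paper's fix is to center at a \emph{constant} rather than at $\mu_\theta$. Observe that for the specific choice $t=\lambda:=\dfrac{2}{\kappa(\lambda_1+\lambda_n)}$ the quadratic $g_\lambda(\mu)=-\lambda\mu+\tfrac{\kappa\lambda^2}{2}\mu^2$ has its vertex at the midpoint $(\lambda_1+\lambda_n)/2$, so $\max_{\mu\in[\lambda_n,\lambda_1]}g_\lambda(\mu)=g_\lambda(\lambda_n)=g_\lambda(\lambda_1)=-\lambda\,\dfrac{\lambda_1\lambda_n}{\lambda_1+\lambda_n}$. Hence $Z(\theta):=-\lambda M\,\theta^\top\Abar_{n,M}\theta+\lambda M\,\dfrac{\lambda_1\lambda_n}{\lambda_1+\lambda_n}$ satisfies $\E e^{Z(\theta)}\le1$ uniformly in $\theta$. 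Running PAC-Bayes on this $Z$ and applying your cap-average formula yields directly $\bar f_{\theta_0}\ge c_\gamma\!\left[\dfrac{\lambda_1\lambda_n}{\lambda_1+\lambda_n}-h(\gamma)\dfrac{\tr\Abar_{n,M}}{n}-\dfrac{\kl{\rho,\pi}+s}{\lambda M}\right]$, and now the $1/(\lambda M)$ factor is $\dfrac{\kappa(\lambda_1+\lambda_n)}{2M}$ while the leading term $\dfrac{\lambda_1\lambda_n}{\lambda_1+\lambda_n}\ge\tfrac12\lambda_n$ is already of the right size. With $\gamma^2=\lambda_n/(16(\lambda_1+1))$ the trace and KL contributions become the stated $\lambda_n/8$ and $n\log(20(\lambda_1+1)/\lambda_n)$, and solving for $s$ so that the bracket equals $\tfrac{3-\varepsilon}{8}\lambda_n$ gives precisely the exponent $-\dfrac{\varepsilon M\lambda_n}{4\kappa(\lambda_1+\lambda_n)}$. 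In short: the missing idea is the minimax choice of the tilting parameter and the constant (not $\mu_\theta$-dependent) centering it enables.
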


 We prove Lemma \ref{Lem:LeftProb} by the probably approximately correct (PAC) Bayesian inequality
\[
\P\left(\forall \mu,\int_\Theta Z(\theta)\mu(d\theta)\leq\kl{\mu,\pi}+t\right)\geq 1-e^{-t},\ \forall t>0,
\]
given $\E[\exp(Z(\theta))]\leq 1$ for all $\theta\in\Theta$,  where $\Theta$ is taken as the hypersphere $S^{n-1}$ and $\pi$ is the uniform distribution on it. Innovating the methodology developed in \cite{Mourtada2022,wang2023optimal}, we choose
\begin{align*}
    Z(\theta)&:=-\frac{2}{\kappa(\lambda_1+\lambda_n)}\sum_{m=1}^M\|R_{\phi_\theta}[u^m]\|_\spaceY^2+\frac{2\lambda_1 \lambda_n}{\kappa(\lambda_1+\lambda_n)^2}M\\
    &=-\frac{2}{\kappa(\lambda_1+\lambda_n)}\innerp{\Abar_{n,M}\theta,\theta}M+\frac{2\lambda_1 \lambda_n}{\kappa(\lambda_1+\lambda_n)^2}M,
\end{align*}
where $\kappa$ is from the fourth-moment condition \eqref{ineq:4thmom}, and $\phi_\theta = \sum_{k=1}^n \theta_k\psi_k$ for $\theta=(\theta_1,\cdots,\theta_n)\in S^{n-1}.$ By letting the probability $\mu$ run over all uniform distributions $\pi_{v,\gamma}$ on a cap centered at $v\in S^{n-1}$ with radius $\gamma\leq 1/2$, we characterize $\sup_{v\in S^{n-1}}\int_\Theta Z(\theta)\pi_{v,\gamma}(d\theta)$ by $\inf_{v\in S^{n-1}}\innerp{\Abar_{n,M}v,v}$ and the trace $\tr{(\Abar_{n,M})}$. The infimum term corresponds to the smallest eigenvalue, giving a PAC bound for the left-tail probability. The trace term is typically controlled by truncation (see \cite{Mourtada2022}), and we only need a rough bound 
\[
\P\left\{\frac{\tr(\Abar_{n,M})}{n} \geq \lambda_1 + 1\right\} \leq \P\left\{\tr(\Abar_{n,M}) \geq \sum_{k=1}^n\lambda_k + n\right\}\leq  \frac{\kappa  \lambda_1^2}{M}. 
\]
This corresponds to the first term of the left-tail probability \eqref{ineq:LeftProb}. We postpone the detailed proof to the Appendix.

The second left-tail probability inequality in Lemma \ref{lemma:prob_cutoff} considers all the eigenvalues and is used for the case of exponential spectral decay. 
\begin{corollary}[Left-tail probability of all eigenvalues]\label{cor:LeftProb}
     Under {\rm Assumptions \ref{assum:model}} and {\rm \ref{assum:4thmom}} on the general model \eqref{eq:model_general}, the left-tail probability of all eigenvalues $\widehat{\lambda}_k := \lambda_{k}(\Abar_{n,M})$ of the empirical normal matrix $\Abar_{n,M}$ defined in \eqref{eq:Ab} satisfies
    \begin{equation}\label{ineq:LeftProb_2}
    \begin{aligned}
      \P \bigg(\exists k,\ \widehat{\lambda}_k\leq \frac{\lambda_k}{4}\bigg) \leq& \ \frac{n\kappa\lambda_1^2}{M} \ + \sum_{k=1}^n \exp{\left(k\log\bigg(\frac{20(\lambda_1+1)}{\lambda_k}\bigg) - \frac{ M\lambda_k}{4\kappa(\lambda_k+\lambda_1)}\right)}.
    \end{aligned}
    \end{equation}
\end{corollary}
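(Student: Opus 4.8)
The plan is to derive Corollary~\ref{cor:LeftProb} from Lemma~\ref{Lem:LeftProb} by a short interlacing-plus-union-bound argument. The starting point is the observation that, by the defining formula \eqref{eq:Ab}, the entry $\Abar_{n,M}(i,j)=\frac{1}{M}\sum_m\innerp{R_{\psi_i}[u^m],R_{\psi_j}[u^m]}_\spaceY$ does not depend on $n$; hence for each $1\le k\le n$ the matrix $\Abar_{k,M}$ is exactly the leading $k\times k$ principal submatrix of $\Abar_{n,M}$, and (by Lemma~\ref{Lem:projEst}) its large-sample limit is $\Abar_{k,\infty}=\mathrm{diag}(\lambda_1,\dots,\lambda_k)$, with largest eigenvalue $\lambda_1$ and smallest eigenvalue $\lambda_k$.

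First I would invoke Cauchy's interlacing theorem for the pair $\Abar_{k,M}$ (the leading $k\times k$ block) and $\Abar_{n,M}$: among the interlacing inequalities, the one I need is $\lambda_{\min}(\Abar_{k,M})=\lambda_k(\Abar_{k,M})\le\lambda_k(\Abar_{n,M})$. Consequently, for any threshold $t$,
\[
\{\lambda_k(\Abar_{n,M})\le t\}\ \subseteq\ \{\lambda_{\min}(\Abar_{k,M})\le t\},
\]
so a small $k$-th eigenvalue of the full random matrix forces a small smallest eigenvalue of the $k\times k$ block. Taking $t=\tfrac{3-\varepsilon}{8}\lambda_k$ and applying Lemma~\ref{Lem:LeftProb} to $\Abar_{k,M}$ in place of $\Abar_{n,M}$ — legitimate because $\Abar_{k,M}$ has exactly the form \eqref{eq:Ab} with $n$ replaced by $k$, so Assumptions~\ref{assum:model}, \ref{assump:noise-X-general}, and \ref{assum:4thmom} transfer verbatim, with the roles of $\lambda_1$ and $\lambda_n$ there played here by $\lambda_1$ and $\lambda_k$ — gives
\[
\P\left\{\lambda_k(\Abar_{n,M})\le\tfrac{3-\varepsilon}{8}\lambda_k\right\}\le\frac{\kappa\lambda_1^2}{M}+\exp\left(k\log\frac{20(\lambda_1+1)}{\lambda_k}-\frac{\varepsilon M\lambda_k}{4\kappa(\lambda_k+\lambda_1)}\right).
\]
Summing over $k=1,\dots,n$ via the union bound yields \eqref{ineq:LeftProb_2}, and setting $\varepsilon=1$ gives \eqref{ineq:LeftProb4_2}.

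There is no genuine obstacle beyond two bookkeeping checks. First, one must use the interlacing inequality in the correct direction: it is the smallest eigenvalue of the leading $k\times k$ submatrix, not of the full matrix, that is dominated by $\lambda_k(\Abar_{n,M})$, which is precisely what makes the event inclusion go the right way. Second, one must confirm that the term $\kappa\lambda_1^2/M$ in Lemma~\ref{Lem:LeftProb} is genuinely independent of the matrix size, so that the union bound over $k$ contributes exactly $n\kappa\lambda_1^2/M$; this holds because that term originates from the trace-concentration bound $\P\{\tr(\Abar_{k,M})\ge\sum_{j\le k}\lambda_j+k\}\le\kappa\lambda_1^2/M$, valid uniformly in $k$.
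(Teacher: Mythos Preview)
Your proposal is correct and follows essentially the same route as the paper's proof: apply Cauchy interlacing to get $\lambda_{\min}(\Abar_{k,M})\le\lambda_k(\Abar_{n,M})$, invoke Lemma~\ref{Lem:LeftProb} with $n$ replaced by $k$, and sum via a union bound. The two bookkeeping points you flag are exactly the ones that need checking, and both hold as you state.
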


\begin{proof}[Proof of Corollary \ref{cor:LeftProb}]
    The Cauchy interlacing theorem (see, e.g., \cite{Horn_Johnson_2012}) implies
    \begin{equation*}
        \widehat{\lambda}_k=\lambda_k(\Abar_{n,M})\geq\lambda_{\min} (\Abar_{k,M}).
    \end{equation*}
    Therefore, we have
    \begin{align*}
        \P\left\{\lambda_{k}(\Abar_{n,M})\leq \frac{\lambda_k}{4} \right\}&\leq \P\left\{\lambda_{\min}(\Abar_{k,M})\leq \frac{\lambda_k}{4} \right\}
            \leq \frac{\kappa\lambda_1^2}{M} + \exp\left(k\log\left(\frac{20(\lambda_1+1)}{\lambda_k}\right) - \frac{  M\lambda_k}{4\kappa(\lambda_k+\lambda_1)}\right).
    \end{align*}
     Then, \eqref{ineq:LeftProb_2} follows from 
     $   \P\left( \bigcup_{k=1}^n\left\{\lambda_{k}(\Abar_{n,M})\leq \frac{\lambda_k}{4} \right\}\right)\leq \sum_{k=1}^n \P\left\{\lambda_{k}(\Abar_{n,M})\leq \frac{\lambda_k}{4} \right\}
$. 
    \end{proof}


\section{Minimax lower rates}\label{sec:lower}

We show next that the minimax lower rates match the minimax upper rates in Theorem \ref{thm:L2rho_upper}, thus confirming the optimality of the rates. 
\begin{theorem}[Minimax lower rate]\label{thm:L2rho_lower}
Under {\rm Assumption \ref{assum:model}} on the model and {\rm Assumption \ref{assump:noise-X-general}} on the noise, we have the following minimax rates. 
\begin{itemize}
    \item[(i)] If the normal operator has polynomial spectral decay {\rm\ref{assum:polydecay}}, 
    \begin{equation}\label{ineq:lower_poly}
\begin{aligned}
	& \liminf_{M\to\infty} \inf_{\widehat \phi \in L^2_\rho }\sup_{\phi_*\in H^\beta_\rho(L)}   \E_{\phi_*}[ M^{\frac{2\beta r}{2\beta r+2r+1}} \| \widehat{\phi}-\phi_* \|_{L^2_\rho} ^2 ] \geq C_{\beta,r,L,a,b,\tau} 
\end{aligned}
\end{equation}
with $C_{\beta,r,L,a,b,\tau} = 2^{-2\beta r -4}a^\beta (\tau b^{\beta+1})^{-\frac{2\beta r}{2\beta r+2r+1}}L^\frac{4r+2}{2\beta r+2r+1}.$
    \item[(ii)] If the normal operator has exponential spectral decay {\rm\ref{assum:expdecay}}, 
    \begin{equation}\label{ineq:lower_exp}
    \liminf_{M\to\infty} \inf_{\widehat \phi \in L^2_\rho}\sup_{\phi_*\in H^\beta_\rho(L)}   \E_{\phi_*}\left[ M^{\frac{\beta}{\beta+1}} \| \widehat{\phi}-\phi_* \|_{L^2_\rho} ^2 \right] \geq C_{\beta,r,L,a,b,\tau}
\end{equation}
with $C_{\beta,r,L,a,b,\tau}=\frac{1}{16}e^{-\beta r}\left(\frac{a}{b}\right)^{\beta}\tau^{-\frac{\beta}{\beta+1}}L^{\frac{2}{\beta+1}}$.
\end{itemize}
\end{theorem}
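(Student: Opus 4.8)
\textbf{Proof plan for Theorem \ref{thm:L2rho_lower}.} The plan is to use Assouad's lemma to reduce the estimation problem to a multiple hypothesis testing problem over a hypercube of kernels, indexed by binary strings. First I would fix the projection dimension $n = n_M$ (to be optimized at the end, with $n_M \asymp M^{1/(2\beta r + 2r+1)}$ in the polynomial case and $n_M \asymp \log M$ in the exponential case) and a perturbation scale $\delta_n > 0$, and construct the finite family
\[
\phi_\omega = \sum_{k=1}^{n} \omega_k \delta_n \lambda_k^{\beta/2}\, \psi_k, \qquad \omega = (\omega_1,\dots,\omega_n) \in \{0,1\}^n.
\]
The scaling $\delta_n \lambda_k^{\beta/2}$ is chosen so that each $\phi_\omega$ lies in the Sobolev class $H^\beta_\rho(L)$: indeed $\|\phi_\omega\|_{H^\beta_\rho}^2 = \delta_n^2 \sum_{k=1}^n \lambda_k^{-\beta}\cdot \lambda_k^{\beta} = n \delta_n^2$, so I need $n\delta_n^2 \le L^2$, i.e. $\delta_n \le L/\sqrt{n}$; I would take $\delta_n = L/\sqrt{n}$ (or a fixed fraction thereof). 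For $\omega,\omega'$ differing in one coordinate $k$, $\|\phi_\omega - \phi_{\omega'}\|_{L^2_\rho}^2 = \delta_n^2 \lambda_k^\beta$, so the separation in the relevant coordinate is governed by $\lambda_n^\beta$ (the smallest, worst case), which matches the bias scale in the upper bound.

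\medskip

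The core of Assouad's argument is then: for any estimator $\widehat\phi$,
\[
\sup_{\phi \in H^\beta_\rho(L)} \E_\phi\big[\|\widehat\phi - \phi\|_{L^2_\rho}^2\big]
\;\ge\; \frac{\delta_n^2}{4}\Big(\min_{1\le k\le n}\lambda_k^\beta\Big)\sum_{k=1}^{n}\Big(1 - \big\|\P_{\omega}^{(k,0)} \wedge \P_{\omega}^{(k,1)}\big\|\Big),
\]
where the term measures how well one can distinguish the two hypotheses that agree on all coordinates except the $k$-th. Each such pair of data-generating measures for $M$ i.i.d.\ samples differs only through the mean of $f^m$, which is shifted by $R_{\phi_\omega - \phi_{\omega'}}[u^m]$. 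I would bound the total variation between the two $M$-fold product measures via Pinsker's inequality and the Kullback--Leibler divergence, which tensorizes: $\kl{\P^M_{\phi}}{\P^M_{\phi'}} = M\, \kl{\P_{\phi}}{\P_{\phi'}}$. Conditioning on the inputs $\{u^m\}$ and using noise Condition \ref{assum:noise_low} (Eq.~\eqref{ineq:noise_low}), the per-sample KL divergence is at most $\frac{\tau}{2}\E\big[\|R_{\phi_\omega - \phi_{\omega'}}[u]\|_\spaceY^2\big] = \frac{\tau}{2}\innerp{\LGbar(\phi_\omega-\phi_{\omega'}),\phi_\omega-\phi_{\omega'}}_{L^2_\rho} = \frac{\tau}{2}\delta_n^2 \lambda_k^{\beta+1}$. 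Hence the pairwise KL over $M$ samples is $\le \frac{\tau M}{2}\delta_n^2 \lambda_k^{\beta+1} \le \frac{\tau M}{2}\delta_n^2 \lambda_1^{\beta+1}$, and choosing $\delta_n$ (equivalently $n_M$) so that this is bounded by a constant less than $2$ keeps each distinguishability term bounded below by a positive constant (say $1/2$). This forces $n\delta_n^2 \asymp 1/M$ up to the slack already allowed by $\delta_n \le L/\sqrt n$; balancing the constraint $M n \delta_n^2 \lambda_1^{\beta+1} = O(1)$ (via $\delta_n = L/\sqrt n$, i.e. $M\lambda_1^{\beta+1}L^2 = O(1)$—which is automatic once $n$ is large) with the lower bound $n\delta_n^2 \lambda_n^\beta/8 \asymp L^2 \lambda_n^\beta$ and optimizing $n_M$ yields the claimed rates $M^{-2\beta r/(2\beta r + 2r+1)}$ and $M^{-\beta/(\beta+1)}$, with the stated constants coming from the spectral bounds $a\le \lambda_k k^{2r}\le b$ (resp.\ exponential) plugged in carefully.

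\medskip

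The main obstacle, which the paper flags explicitly in Section \ref{sec:tv_meas_inftyD}, is the control of the total variation / KL divergence when $\spaceY$ is infinite-dimensional: then $f^m$ is a distribution-valued (white-noise-type) observation, and the measures $\P_\phi$, $\P_{\phi'}$ live on an infinite-dimensional space where there is no Lebesgue reference measure and Condition \ref{assum:noise_low} is stated only for finite-dimensional marginals. The remedy is to approximate: fix an orthonormal basis $\{y_i\}$ of $\spaceY$, let $\calF_N = \sigma(\innerp{f^m,y_i}: i\le N, m\le M)$ be the filtration generated by the first $N$ coordinates, bound $\kl{\P_\phi|_{\calF_N}}{\P_{\phi'}|_{\calF_N}}$ using \eqref{ineq:noise_low} applied to $p_N$ with $v$ the corresponding coordinate shift of the mean (whose squared norm is $\le \|R_{\phi-\phi'}[u]\|_\spaceY^2$), and then pass to the limit $N\to\infty$. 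Since $\bigcup_N \calF_N$ generates the full $\sigma$-algebra, the monotone class theorem (martingale convergence for the Radon--Nikodym derivatives) gives $\kl{\P_\phi}{\P_{\phi'}} = \lim_N \kl{\P_\phi|_{\calF_N}}{\P_{\phi'}|_{\calF_N}} \le \frac{\tau}{2}\E[\|R_{\phi-\phi'}[u]\|_\spaceY^2]$, restoring the finite-dimensional bound in the limit. With that technicality dispatched, the remaining steps—verifying the hypercube sits in $H^\beta_\rho(L)$, applying Assouad, and optimizing $n_M$—are routine, and I would present the polynomial and exponential cases in parallel, differing only in the arithmetic of the final optimization.
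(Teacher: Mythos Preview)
Your overall strategy (Assouad hypercube, Pinsker/KL control, filtration approximation for infinite-dimensional $\spaceY$) matches the paper's, but there is a genuine gap in your hypercube construction that would break the rate computation as you have written it.

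You place the hypercube on the \emph{first} $n$ eigenfunctions, $\phi_\omega=\sum_{k=1}^{n}\omega_k\delta_n\lambda_k^{\beta/2}\psi_k$, and then bound the per-coordinate KL over $M$ samples by $\tfrac{\tau M}{2}\delta_n^2\lambda_k^{\beta+1}\le \tfrac{\tau M}{2}\delta_n^2\lambda_1^{\beta+1}$. But $\lambda_1$ is a fixed constant independent of $n$, so the claim ``$M\lambda_1^{\beta+1}L^2=O(1)$---which is automatic once $n$ is large'' is simply false: with $\delta_n=L/\sqrt{n}$ the KL at coordinate $k=1$ is $\asymp M/n$, which blows up unless $n\gtrsim M$. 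That in turn forces $\lambda_n^\beta\lesssim M^{-2\beta r}$ (polynomial case), a rate strictly worse than the target $M^{-2\beta r/(2\beta r+2r+1)}$. Your stated balancing ``$Mn\delta_n^2\lambda_1^{\beta+1}=O(1)$'' inserts an extra factor $n$ with no justification; the Assouad KL constraint is per coordinate, not summed.

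The paper's fix is to place the hypercube on a \emph{shifted window} of indices $k\in\{n_M,\dots,n_M+L_M-1\}$ (Eq.~\eqref{def:test_fct}), so that the largest eigenvalue appearing in the KL bound is $\lambda_{n_M}$ rather than $\lambda_1$. The binding constraint then reads $\tau M L^2 L_M^{-1}\lambda_{n_M}^{\beta+1}\le 1$, which is exactly what allows $n_M\asymp M^{1/(2\beta r+2r+1)}$ in the polynomial case (with $L_M=n_M$) and $n_M\asymp\log M$ in the exponential case (with $L_M=1$, i.e.\ a two-point Le~Cam argument). Your construction can be salvaged by discarding the contribution of the low-index coordinates and only summing over $k\in[n/2,n]$ in the Assouad bound, but that amounts to the same windowing idea; as written, the argument does not close.

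A minor remark on the infinite-dimensional step: the paper does not prove convergence of the KL divergences along the filtration, but rather convergence of the total variation distances (via the monotone class theorem, Lemma~\ref{lemma:tv}), applying Pinsker at each finite $N$ and passing the uniform KL bound \eqref{eq:KLbd0} through the limit. Your proposed route through $\kl{\P_\phi}{\P_{\phi'}}=\lim_N\kl{\P_{\phi}|_{\calF_N}}{\P_{\phi'}|_{\calF_N}}$ would also work here, but requires an extra monotone-convergence argument you have not spelled out.
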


Notably, the constant is $\tau = 1/\sigma^2$ when the space $\spaceY$ is finite-dimensional and the noise is standard Gaussian with variance $\sigma^2$. Then, the orders of $\sigma^2$ and $L$ in the above constants $C_{\beta,r,L,a,b,\tau}$ match those in the constants $C_{\beta,r,L,a,b,\sigma}$ in the upper bounds. That is, the constants $C_{\beta,r,L,a,b,\sigma}$ are sharp in the orders of $\sigma$ and $L$.

\subsection{The reduction scheme and innovations}

We establish these minimax lower rates by the Assouad method \cite{assouad1983deux,yu97Assouad} that reduces the estimation to a test of $2^{L_M}$ hypotheses indexed by $L_M$ binary coefficients in the eigenfunction expansion. We summarize it in the following three steps. 

\begin{itemize}
\item \emph{Reduction to average probability of test errors over finite sets.} We first reduce the infimum over all estimators and the supremum over $H^\beta_\rho(L)$ to the infimum and supremum over a finite subset $\Phi_M\subset H^\beta_\rho(L)$, that is, 
    \begin{equation}\label{ineq:lowerb_finite}
    \begin{aligned}
        \inf_{\widehat \phi\in L^2_\rho}\sup_{\phi_*\in H^\beta_\rho(L)}   \E_{\phi_*}[ \| \widehat{\phi}-\phi_* \|_{L^2_\rho} ^2 ]
        & \geq \inf_{\widehat \phi\in L^2_\rho}\sup_{\phi_*\in \Phi_M}   \E_{\phi_*}[ \| \widehat{\phi}-\phi_* \|_{L^2_\rho} ^2 ]  \\
        & \geq \frac{1}{4}  \inf_{\widehat \phi\in \Phi_M}\sup_{\phi_*\in \Phi_M}   \E_{\phi_*}[ \| \widehat{\phi}-\phi_* \|_{L^2_\rho} ^2 ],
        \end{aligned}
    \end{equation}
    where the second inequality follows from Lemma \ref{lem:lowerb_finite_est} below. Recall the positive eigenvalues $\{\lambda_k\}_{k=1}^\infty$ of the normal operator $\LGbar$ in \eqref{eq:normal_opt} and their orthonormal eigenfunctions $\{\psi_k\}_{k=1}^\infty$, the set $\Phi_M$ is given by
    \begin{equation}\label{def:test_fct}
        \Phi_M:= \left\{ \sum_{k=n_M}^{n_M+L_M-1}  \theta_k \psi_k:\theta_k\in \left\{0,L_M^{-1/2}L\lambda_k^{\beta/2}\right\}\right\},
    \end{equation}
    where $n_M$ and $L_M$ are to be determined according to $M$ and the spectral decay. 
Then, writing $\widehat{\phi} = \sum_{k=n_M}^{n_M+L_M-1}  \widehat{\theta}_k \psi_k$ and $\phi_* =  \sum_{k=n_M}^{n_M+L_M-1}  \theta_k^* \psi_k$, we bound the supreme of the expectations from below by the average test error (see Section \ref{sec:proofs-lower} for its proof):  
    \begin{equation}\label{ineq:infsup_lowerb}
        \begin{aligned}
           &  \inf_{\widehat \phi\in  \Phi_M}\sup_{\phi_*\in \Phi_M}   \E_{\phi_*}[  \| \widehat{\phi}-\phi_* \|_{L^2_\rho} ^2 ]  =\inf_{\widehat \phi\in  \Phi_M} \sup_{\phi_*\in \Phi_M} \sum_{k=n_M}^{n_M+L_M-1} \E_{\phi_*}\left[\left|\widehat{\theta}_k-\theta_k^*\right|^2\right]\\
            \geq & \, \, \big(L_M^{-1} L^2  \sum_{k=n_M}^{n_M+L_M-1} \lambda_k^{\beta }\big) 2^{-L_M}  \inf_{\widehat \phi\in \Phi_M}\min_{k} \sum_{\phi_*\in \Phi_M}\P_{\phi_*}\big(\widehat{\theta}_k\ne\theta_k^*\big). 
        \end{aligned}
    \end{equation}
   
   \item \emph{Lower bound for the average probability of test errors over $\Phi_M$.} We write the sum over all probability of test errors in terms of the total variational distance, which we control by the Kullback--Leibler divergence between restricted measures (see Lemma \ref{lem:tvbd}). By doing so, we obtain a lower bound for the average probability of test errors as in Lemma \ref{lem:testerror}.  
    \item \emph{Selection of $n_M$ and $L_M$ to achieve the optimal rates.} We first select $n_M$ and $L_M$ such that the average test error is bounded from below for all $M$, then verify the minimax lower rate: 
    \begin{equation}\label{ineq:lowerbrate}
         L_M^{-1} L^2\sum_{k=n_M}^{n_M+L_M-1} \lambda_k^{\beta } \asymp 
         \begin{cases}
         	M^{-\frac{2\beta r}{2\beta r + 2r + 1}}\,, & \text{polynomial decay}; \\
         	M^{-\frac{\beta}{\beta+1}}\,, & \text{exponential decay}.
         \end{cases}
    \end{equation}
\end{itemize}
   
      The next two lemmas are the key steps in the scheme, and we postpone their proofs to Section \ref{sec:proofs-lower}.  
\begin{lemma}\label{lem:lowerb_finite_est}
Let $\Phi_M$ be the finite set of functions defined in \eqref{def:test_fct}. Then, 
    \begin{equation}\label{ineq:lowerb_finite_est}
        \inf_{\widehat \phi\in L^2_{\rho}}\, \sup_{\phi_*\in \Phi_M}   \E_{\phi_*}[ \| \widehat{\phi}-\phi_* \|_{L^2_\rho} ^2 ]\geq \frac{1}{4}  \inf_{\widehat \phi \in  \Phi_M}\sup_{\phi_*\in \Phi_M}   \E_{\phi_*}[ \| \widehat{\phi}-\phi_* \|_{L^2_\rho} ^2 ],
    \end{equation}
\end{lemma}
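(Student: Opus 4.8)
The plan is to reduce, at the cost of the factor $1/4$, to estimators taking values in the finite set $\Phi_M$, via the standard ``projection'' trick. Given an arbitrary estimator $\widehat\phi\in L^2_\rho$ --- that is, a measurable function of the data $\{(u^m,f^m)\}_{m=1}^M$ --- I would define $\widetilde\phi$ to be a point of $\Phi_M$ nearest to $\widehat\phi$ in the $L^2_\rho$-norm, with ties broken by a fixed rule (say, the lexicographic order on the binary coefficient vectors indexing $\Phi_M$ in \eqref{def:test_fct}). Since $\Phi_M$ is finite, the map $\widehat\phi\mapsto\widetilde\phi$ is measurable, so $\widetilde\phi$ is a legitimate $\Phi_M$-valued estimator.

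Next I would fix $\phi\in\Phi_M$ and combine the triangle inequality with the defining property of $\widetilde\phi$: because $\phi\in\Phi_M$ is itself a competitor in the nearest-point problem, $\|\widetilde\phi-\widehat\phi\|_{L^2_\rho}\le\|\widehat\phi-\phi\|_{L^2_\rho}$, hence $\|\widetilde\phi-\phi\|_{L^2_\rho}\le\|\widetilde\phi-\widehat\phi\|_{L^2_\rho}+\|\widehat\phi-\phi\|_{L^2_\rho}\le 2\|\widehat\phi-\phi\|_{L^2_\rho}$, and therefore $\|\widetilde\phi-\phi\|_{L^2_\rho}^2\le 4\|\widehat\phi-\phi\|_{L^2_\rho}^2$. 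Taking $\E_\phi$ and then $\sup_{\phi\in\Phi_M}$ gives $\sup_{\phi\in\Phi_M}\E_\phi[\|\widetilde\phi-\phi\|_{L^2_\rho}^2]\le 4\sup_{\phi\in\Phi_M}\E_\phi[\|\widehat\phi-\phi\|_{L^2_\rho}^2]$; since $\widetilde\phi\in\Phi_M$, the left-hand side is at least $\inf_{\psi\in\Phi_M}\sup_{\phi\in\Phi_M}\E_\phi[\|\psi-\phi\|_{L^2_\rho}^2]$. As this holds for every $\widehat\phi\in L^2_\rho$, minimizing the right-hand side over all such $\widehat\phi$ and rearranging yields \eqref{ineq:lowerb_finite_est}.

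This argument is essentially obstruction-free --- it is precisely the reason the constant in \eqref{ineq:lowerb_finite} is $1/4$ rather than $1$ --- and the only points that need a word of care are the measurability of the projection (immediate from the finiteness of $\Phi_M$) so that $\widetilde\phi$ qualifies as an estimator, and the containment $\Phi_M\subset L^2_\rho$, which holds by construction in \eqref{def:test_fct}. A marginally sharper constant is available if one instead tracks the minimax risk over $\Phi_M$ more carefully, but the factor $1/4$ is harmless for the lower-rate conclusion and this projection reduction is the cleanest route.
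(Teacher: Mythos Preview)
Your proof is correct and follows essentially the same route as the paper: project an arbitrary estimator to its nearest neighbor in $\Phi_M$, apply the triangle inequality together with the minimizing property to get $\|\widetilde\phi-\phi\|_{L^2_\rho}\le 2\|\widehat\phi-\phi\|_{L^2_\rho}$, then take $\E_\phi$, $\sup_\phi$, and $\inf_{\widehat\phi}$. The only cosmetic difference is that the paper writes out the nearest-point map explicitly as coordinate-wise thresholding of the Fourier coefficients, whereas you invoke finiteness of $\Phi_M$ and a tie-breaking rule to guarantee measurability; both are fine.
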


\begin{lemma}\label{lem:testerror}
Let $\P_{\phi_*}$ be the measure of samples $\{(u^m,f^m)\}_{m=1}^M$ from {\rm Model} \eqref{eq:model_general} with kernel $\phi_*$ and with noise satisfying {\rm Assumption \ref{assump:noise-X-general}}. Let $\Phi_M$ be the set in \eqref{def:test_fct}. Then, we have 
        \begin{equation}\label{eq:lb_avg_ProbErr}
            2^{-L_M}\inf_{\widehat \phi\in \Phi_M} \min_{k}  \sum_{\phi_* \in \Phi_M}\P_{\phi_*}\left(\widehat{\theta}_k\ne\theta_k^*\right)\geq 2^{-1}\left(1-\frac{1}{2} \sqrt{\tau M L^2 L_M^{-1} \lambda_{n_M}^{\beta + 1}}\right).
        \end{equation}
    \end{lemma}

The main idea of the scheme is to reduce the infimum over all estimators and the supremum over all functions in $H^\beta_\rho(L)$ to a finite set $\Phi_M$ consisting of functions with binary coefficients in the eigenfunction expansion, and reduce the expectations to the probability of hypothesis test errors. Then, similar to Assouad \cite{assouad1983deux} and Le Cam \cite{lecam1973convergence}, we use the Neyman-Pearson lemma to control the average probability of test errors $  2^{-L_M}\sum_{\phi_*\in \Phi_M}\P_{\phi_*}\left(\widehat{\theta}_k\ne\theta_k^*\right)$ through the total variation distance, which is bounded by the Kullback--Leibler divergence by the Pinsker's inequality $ d_{\mathrm{tv}}\left(\P_{\phi},\P_{\psi}\right)\leq \sqrt{\frac{1}{2}\kl{\P_{\phi},\P_{\psi}}} $. This approach is similar to the Fano method (see, e.g., \cite[Theorem 2.5]{tsybakov2008introduction} and \cite[Chapter 15]{Wainwright2019}), and we refer to \cite{yu97Assouad} for a comparison of these methods. It has been used to establish minimax rates in \cite{yuan2010reproducing,hall2007methodology} for functional linear regression and in \cite{BM18,Helin2024} for statistical inverse problems.

Our main innovation from \cite{hall2007methodology,yuan2010reproducing} is extending the results from scalar-valued output (i.e., $\spaceY=\R$) to separable Hilbert space-valued output. The main difficulty lies in controlling the distance between $\P_\phi$ and $\P_{\psi}$ when $\spaceY$ is infinite-dimensional, because to define the Radon–Nikodym derivative $\frac{d\P_\phi}{d\P_\psi}$ in the KL divergence, one needs additional conditions on the noise and tools in infinite-dimensional analysis, e.g., the Cameron-Martin space \cite{Nua06,da2006introduction,Hu2016analysis} when noise is induced by Gaussian measure. We overcome the difficulty by using their filtered approximations $\P_\phi\big|_{\calF_N}$ and $\P_\psi\big|_{\calF_N}$ over the filtration $ \calF_N := \sigma\left(\left\{u^m\right\}_{m=1}^M,\left\{\innerp{\varepsilon^m,y_1},\cdots\innerp{\varepsilon^m,y_N}\right\}_{m=1}^M \right) $ for $N\geq 1$. Their Radon-Nikodym derivatives are on finite-dimensional subspaces and their KL divergence can be controlled through the conditions on noise in {\rm Assumption \ref{assump:noise-X-general}}; see {\rm Section \ref{sec:tv_meas_inftyD}}. 
    
\begin{proof}[Proof of Theorem \ref{thm:L2rho_lower}]
First, combining \eqref{ineq:lowerb_finite} (which follows from Lemma \ref{lem:lowerb_finite_est}),  \eqref{ineq:infsup_lowerb}, and Lemma \ref{lem:testerror}, we obtain 
        \begin{align*}
                   & \inf_{\widehat \phi\in L^2_\rho}\sup_{\phi_*\in H^\beta_\rho(L)}   \E_{\phi_*}[ \| \widehat{\phi}-\phi_* \|_{L^2_\rho} ^2 ] 
            \geq  \frac{1}{4}\inf_{\widehat{\phi}\in\Phi_M}\sup_{\phi_*\in \Phi_M}   \E_{\phi_*}[ \| \widehat{\phi}-\phi_* \|_{L^2_\rho} ^2 ] \\
            = & \frac{1}{4} \inf_{\widehat \phi\in  \Phi_M} \sup_{\phi_*\in \Phi_M} \sum_{k=n_M}^{n_M+L_M-1} \E_{\phi_*}\left[\left|\widehat{\theta}_k-\theta_k^*\right|^2\right]\\
            \geq & \frac{1}{4} \big(L_M^{-1} L^2  \sum_{k=n_M}^{n_M+L_M-1} \lambda_k^{\beta }\big) 2^{-L_M}  \inf_{\widehat \phi\in \Phi_M}\min_{k} \sum_{\phi_*\in \Phi_M}\P_{\phi_*}\big(\widehat{\theta}_k\ne\theta_k^*\big) \\
             \geq & 2^{-3}\big(L_M^{-1} L^2  \sum_{k=n_M}^{n_M+L_M-1} \lambda_k^{\beta }\big)  \left(1-\frac{1}{2} \sqrt{\tau M L^2 L_M^{-1} \lambda_{n_M}^{\beta + 1}}\right). 
            \end{align*}
            
Next, we appropriately choose $L_M$ and $n_M$ based on the exponential or polynomial spectral decay, ensuring that $ \left(1-\frac{1}{2} \sqrt{\tau M L^2 L_M^{-1} \lambda_{n_M}^{\beta + 1}}\right)$ is bounded from below. Additionally, these choices guarantee that the term $\big(L_M^{-1} L^2  \sum_{k=n_M}^{n_M+L_M-1} \lambda_k^{\beta }\big)$ in \eqref{ineq:lowerbrate} gives the desired rates specified in \eqref{ineq:lower_poly} and \eqref{ineq:lower_exp}.
 
 \vspace{2mm}\noindent\textbf{Exponential spectral decay.} When the spectrum decays exponentially, i.e., $ a\exp(-rk)\leq \lambda_k\leq b\exp(-rk)$ for all $k\geq 1$, we take 
  $  L_M=1, \,   n_M=\left\lceil\frac{\log\left(\tau M L^2 b^{\beta + 1}\right)}{(\beta+1)r}\right\rceil\,
   $ to obtain 
    \begin{align*}
       \left(1-\frac{1}{2} \sqrt{\tau M L^2 L_M^{-1} \lambda_{n_M}^{\beta + 1}}\right) & \geq  1 - \frac{1}{2}\sqrt{\tau M L^2 b^{\beta + 1}\exp(-(\beta+1)rn_M)} \geq 1-\frac{1}{2}=\frac{1}{2};  \\
       L_M^{-1} L^2  \sum_{k=n_M}^{n_M+L_M-1} \lambda_k^{\beta }
                  &  = L^2 \lambda_{n_M}^\beta \geq L^2 a^\beta e^{-\beta rn_M}. 
    \end{align*}
Thus, noting that $ e^{-\beta rn_M} \geq e^{-\beta r} e^{-\beta r\frac{\log\left(\tau M L^2 b^{\beta + 1}\right)}{(\beta+1)r} } = e^{-\beta r} \left(\tau M L^2 b^{\beta + 1}\right)^{-\frac{\beta}{\beta+1}}$, we have 
        \begin{align*}
            \inf_{\widehat \phi\in L^2_\rho}\sup_{\phi_*\in H^\beta_\rho(L)}   \E_{\phi_*}[ \| \widehat{\phi}-\phi_* \|_{L^2_\rho} ^2 ]\geq \frac{\lambda_{n_M}^\beta L^2}{16} \geq \frac{a^\beta L^2}{16}e^{-\beta rn_M} 
            \geq C_{\beta,r,L,a,b,\tau}
            M^{-\frac{\beta}{\beta+1}}.
        \end{align*}

\noindent\textbf{Polynomial spectral decay.} When the spectrum decays polynomially, i.e., $ak^{-2r}\leq \lambda_k\leq bk^{-2r}$ for all $k\geq 1$, we take $L_M=n_M = \left\lceil\left(\tau M L^2 b^{\beta + 1}\right)^{\frac{1}{2\beta r + 2r + 1}}\right\rceil $ to obtain 
\[ 
\tau M L^2 L_M^{-1} \lambda_{n_M}^{\beta + 1} \leq \tau M L^2 n_M^{-1} b^{\beta + 1} n_M^{-2\beta r - 2r} = \tau M L^2b^{\beta + 1} n_M^{-2\beta r - 2r -1} \leq 1, 
\]
 so that $ \left(1-\frac{1}{2} \sqrt{\tau M L^2 L_M^{-1} \lambda_{n_M}^{\beta + 1}}\right)  \geq  1 - \frac{1}{2}=\frac{1}{2}$ and 
            \begin{align*}
       \big(L_M^{-1} L^2  \sum_{k=n_M}^{n_M+L_M-1} \lambda_k^{\beta }\big)
                  &  \geq L^2 \left(a(n_M+L_M-1)^{-2r}\right)^\beta \geq a^\beta L^2 (2n_M)^{-2\beta r}. 
    \end{align*}
  Then, noting that $(2n_M)^{-2\beta r} = \left(2^{-2\beta r} \left(\tau  L^2 b^{\beta + 1}\right)^{- \frac{2\beta r}{2\beta r + 2r + 1}}+o(1)\right)M^{- \frac{2\beta r}{2\beta r + 2r + 1}}$, we have 
          \begin{align*}
            \inf_{\widehat \phi\in L^2_\rho}\sup_{\phi_*\in H^\beta_\rho(L)}   \E_{\phi_*}[ \| \widehat{\phi}-\phi_* \|_{L^2_\rho} ^2 ]             \geq \frac{2^{-2\beta r}}{16} a^\beta L^2  \left(\tau L^2 b^{\beta + 1}\right)^{- \frac{2\beta r}{2\beta r + 2r + 1}} M^{- \frac{2\beta r}{2\beta r + 2r + 1} }.
        \end{align*}  
Thus, we have obtained the rates in \eqref{ineq:lower_exp} and \eqref{ineq:lower_poly}, respectively. 
 \end{proof}

\subsection{A bound for total variation distance}\label{sec:tv_meas_inftyD}
To prove Lemma \ref{lem:testerror}, we will use the Neyman-Pearson lemma to control the average probability of test errors by the total variation distances. Then we use the Pinsker's inequality to control the total variation distances by the Kullback--Leibler divergence, i.e., $ d_{\mathrm{tv}}\left(\P_{\phi},\P_{\psi}\right)\leq \sqrt{\frac{1}{2}\kl{\P_{\phi},\P_{\psi}}} $, which uses the Radon-Nikodym derivative $\frac{d\P_\phi}{d\P_\psi}$.

 However, a major difficulty arises when $\spaceY$ is infinite-dimensional, as the Radon-Nikodym derivative $\frac{d\P_\phi}{d\P_\psi}$ is difficult to compute from the conditions on the noise in {\rm Assumption \ref{assump:noise-X-general}}, which only provides a bound for the KL divergence between finite-dimensional marginal distributions. An exception is when the noise is an isonormal Gaussian process, for which $\frac{d\P_\phi}{d\P_\psi}$ is given by the Cameron-Martin formula; see, e.g., \cite{da2006introduction,Hu2016analysis}.  
    
We solve this issue by considering restricted measures $\left\{\P_\phi\big |_{\calF_N}\right\}_{N= 1}^\infty$ on a filtration $\{\calF_N\}_{N=1}^\infty$ generated by finite-dimensional projections, which connect the measure $\P_\phi$ with the condition on the noise in {\rm Assumption \ref{assump:noise-X-general}}, along with a lemma that characterizes the total variation between $\P_\phi$ and $\P_\psi$ as the limit of the restricted measures. As a result, we can bound the total variation by the limit of the KL divergence between the restricted measures, which we state in the next lemma and postpone its proof to Appendix \ref{sec_append:tv_bd}. 

    \begin{lemma}\label{lem:tvbd}
   Let $\P_\phi, \P_\psi$ be the probability measures induced by samples $\{(u^m,f^m)\}_{m=1}^M$ from {\rm Model} \eqref{eq:model_general} with kernels $\phi$ and $\psi$, respectively.   
 Under {\rm Assumption \ref{assump:noise-X-general}} on the noise, let $\P_{\phi, N}:= \P_\phi\big |_{\calF_N}$ and $\P_{\psi, N}:= \P_\psi\big |_{\calF_N}$ be the restricted measures on the filtration $$   
   \calF_N := \sigma\big(\big\{u^m,  (\innerp{\varepsilon^m,y_i})_{1\leq i\leq N}  \big\}_{m=1}^M 
   \big) 
  $$ for all $N\geq 1$. 
Then, we have 
 \begin{equation}\label{eq:KLbd0}
 	\kl{\P_{\phi,N},\P_{\psi,N}} \leq \frac{\tau M}{2} \E\left[ \left\|R_{\psi-\phi}[u]\right\|_\spaceY^2 \right], 
 \end{equation}
and when $\spaceY$ is either finite- or infinite-dimensional, we have 
       \begin{equation}\label{ineq:tvbd0}
           d_{\mathrm{tv}}\left(\P_{\phi},\P_{\psi}\right)            \leq \frac{1}{2} \sqrt{\tau M\, \E\left[ \left\|R_{\psi-\phi}[u]\right\|_\spaceY^2 \right]}. 
                  \end{equation}
   \end{lemma}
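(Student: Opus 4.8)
\textbf{Proof proposal for Lemma \ref{lem:tvbd}.}

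The plan is to first establish the Kullback--Leibler bound \eqref{eq:KLbd0} on the finite-dimensional filtration $\calF_N$, where the Radon--Nikodym derivative $\frac{d\P_{\phi,N}}{d\P_{\psi,N}}$ is available, and then pass to the limit $N\to\infty$ to recover the total variation bound \eqref{ineq:tvbd0}. For the first part, fix an orthonormal basis $\{y_i\}_{i\geq 1}$ of $\spaceY$ and observe that on $\calF_N$ the data $\{(u^m,f^m)\}_{m=1}^M$ are equivalent to the finitely many coordinates $\{u^m\}_{m=1}^M$ together with the real numbers $\{\innerp{f^m,y_i}\}_{1\leq i\leq N, 1\leq m\leq M}$. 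Conditioning on $\{u^m\}$, which have the same law under $\P_\phi$ and $\P_\psi$ since the input distribution does not depend on the kernel, the vector $(\innerp{f^m,y_i})_{i\leq N}$ equals $(\innerp{R_\phi[u^m],y_i}_\spaceY)_{i\leq N} + (\innerp{\varepsilon^m,y_i})_{i\leq N}$, i.e.\ a deterministic shift (given $u^m$) of the noise vector, whose density is $p_N$ by Assumption \ref{assump:noise-X-general}\ref{assum:noise_low}. Hence the conditional density of $(\innerp{f^m,y_i})_{i\leq N}$ under $\P_\phi$ is $p_N(\,\cdot - v_\phi^m)$ with $v_\phi^m = (\innerp{R_\phi[u^m],y_i}_\spaceY)_{i\leq N}$, and the chain rule for KL divergence (summing over the $M$ independent samples and using the tensorization of KL over the product structure, with the contribution from the common $\{u^m\}$-marginal vanishing) gives
\[
\kl{\P_{\phi,N},\P_{\psi,N}} = \sum_{m=1}^M \E\big[\kl{p_N(\cdot - v_\phi^m), p_N(\cdot - v_\psi^m)}\big] = \sum_{m=1}^M \E\big[\kl{p_N, p_N(\cdot + (v_\psi^m - v_\phi^m))}\big],
\]
where the last equality is a change of variables. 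Applying the bound \eqref{ineq:noise_low} with $v = v_\psi^m - v_\phi^m = (\innerp{R_{\psi-\phi}[u^m],y_i}_\spaceY)_{i\leq N}$ and using $\|v\|^2 = \sum_{i=1}^N \innerp{R_{\psi-\phi}[u^m],y_i}_\spaceY^2 \leq \|R_{\psi-\phi}[u^m]\|_\spaceY^2$ by Parseval, together with the fact that the $u^m$ are i.i.d., yields $\kl{\P_{\phi,N},\P_{\psi,N}} \leq \frac{\tau M}{2}\,\E[\|R_{\psi-\phi}[u]\|_\spaceY^2]$, which is \eqref{eq:KLbd0} and is uniform in $N$.

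For the second part, I would invoke a monotone-class / martingale convergence argument to show $d_{\mathrm{tv}}(\P_\phi,\P_\psi) = \lim_{N\to\infty} d_{\mathrm{tv}}(\P_{\phi,N},\P_{\psi,N})$. The filtration $\{\calF_N\}$ is increasing and, because $\{y_i\}$ is a basis of the separable space $\spaceY$, the map $f^m \mapsto (\innerp{f^m,y_i})_{i\geq 1}$ is a measurable bijection onto its range, so $\sigma(\bigcup_N \calF_N)$ is (up to completion) the full data $\sigma$-algebra on which $\P_\phi$ and $\P_\psi$ live; the monotone class theorem then identifies the generated $\sigma$-algebra. Writing $d_{\mathrm{tv}}(\P_{\phi,N},\P_{\psi,N}) = \frac12\int \big|\frac{d\P_{\phi,N}}{d\mu_N} - \frac{d\P_{\psi,N}}{d\mu_N}\big|\,d\mu_N$ for a dominating measure, the restricted densities form a martingale with respect to $\{\calF_N\}$, and the total variation distances $d_{\mathrm{tv}}(\P_{\phi,N},\P_{\psi,N})$ are nondecreasing in $N$ (conditioning cannot increase total variation) and bounded by $1$, hence convergent; their limit equals $d_{\mathrm{tv}}(\P_\phi,\P_\psi)$ by the martingale convergence theorem applied to the density ratios (or, equivalently, by approximating indicator functions of sets in $\sigma(\bigcup_N\calF_N)$ by $\calF_N$-measurable ones). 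Combining this with Pinsker's inequality $d_{\mathrm{tv}}(\P_{\phi,N},\P_{\psi,N}) \leq \sqrt{\tfrac12 \kl{\P_{\phi,N},\P_{\psi,N}}}$ and the uniform bound from the first part gives \eqref{ineq:tvbd0}.

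The main obstacle is the limiting step: one must carefully justify that the increasing filtration $\{\calF_N\}$ generates the relevant $\sigma$-algebra and that total variation distance is continuous along it, which requires the separability of $\spaceY$ and a clean application of the monotone class theorem (and avoids any appeal to Cameron--Martin theory or explicit infinite-dimensional densities). The finite-dimensional KL computation itself is routine given the noise assumptions, but care is needed to ensure the KL chain rule is applied to the correct conditional structure — namely that the $\{u^m\}$-marginals coincide so they contribute nothing, and that the remaining conditional laws are genuine shifts of the density $p_N$ so that \eqref{ineq:noise_low} applies directly.
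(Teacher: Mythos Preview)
Your proposal is correct and follows essentially the same route as the paper: compute the Radon--Nikodym derivative on $\calF_N$ by recognizing the conditional law of $(\innerp{f^m,y_i})_{i\leq N}$ given $u^m$ as a shift of $p_N$, apply Assumption~\ref{assump:noise-X-general}\ref{assum:noise_low} and Parseval to get \eqref{eq:KLbd0}, then combine Pinsker with the monotone-class argument (the paper's Lemma~\ref{lemma:tv}) to pass to the limit in total variation. The paper writes out the density ratio explicitly rather than invoking the KL chain rule, and uses only the monotone-class argument for the limiting step rather than martingale convergence, but these are packaging differences, not substantive ones.
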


\subsection{Proofs of the key lemmas in minimax lower rate}\label{sec:proofs-lower}

\begin{proof}[Proof of \eqref{ineq:infsup_lowerb}]
We reduce the supremum to the average over the set $\Phi_M$ (which has $2^{L_M}$ elements), 
\begin{align*}
	\sup_{\phi_*\in \Phi_M}   \E_{\phi_*}[  \| \widehat{\phi}-\phi_* \|_{L^2_\rho} ^2 ] 
            &\geq 2^{-L_M} \sum_{\phi_*\in \Phi_M} \E_{\phi_*}[  \| \widehat{\phi}-\phi_* \|_{L^2_\rho} ^2 ].  
\end{align*}
 Meanwhile, by orthogonality of $\{\psi_k\}$ and definition of the set $\Phi_M$, we get
\begin{align*}
	\E_{\phi_*}[  \| \widehat{\phi}-\phi_* \|_{L^2_\rho} ^2 ] &=\sum_{k=n_M}^{n_M+L_M-1} \E_{\phi_*}\left[\left|\widehat{\theta}_k-\theta_k^*\right|^2\right] =\sum_{k=n_M}^{n_M+L_M-1} L_M^{-1}L^2\lambda_k^{\beta} \P_{\phi_*}\left(\widehat{\theta}_k\ne\theta_k^*\right)\,.
\end{align*}
Thus, we obtain that, for each $\widehat{\phi}\in \Phi_M$, 
    \begin{equation}\label{eq:supsum}
        \begin{aligned}
            \sup_{\phi_*\in \Phi_M}  & \E_{\phi_*}[  \| \widehat{\phi}-\phi_* \|_{L^2_\rho} ^2 ] \geq L_M^{-1} L^2 2^{-L_M} \sum_{k=n_M}^{n_M+L_M-1} \lambda_k^{\beta } \sum_{\phi_*\in \Phi_M} \P_{\phi_*}\left(\widehat{\theta}_k\ne\theta_k^*\right)\\
            \geq &\ \left( L_M^{-1} L^2\sum_{k=n_M}^{n_M+L_M-1} \lambda_k^{\beta } \right) 2^{-L_M} \min_{k}\sum_{\phi_*\in \Phi_M} \P_{\phi_*}\left(\widehat{\theta}_k\ne\theta_k^*\right). 
        \end{aligned}
    \end{equation}
Taking the infimum over $\widehat{\phi}\in \Phi_M$, we obtain \eqref{ineq:infsup_lowerb}. 
\end{proof}

    \begin{proof}[Proof of Lemma \ref{lem:testerror}]
    For each $\phi\in \Phi_M$ and $n_M\leq k\leq n_M+L_M-1$, we define $\phi^{(-k)} \in \Phi_M$ to be identical to $\phi$ in all coefficients except for the $k$-th coefficient $\theta_k(\phi)$, which is either flipped from $0$ to $L_M^{-1/2} L \lambda_k^{\beta/2}$ or from $L_M^{-1/2} L \lambda_k^{\beta/2}$ to $0$, i.e., 
    \begin{equation*}
        \phi^{(-k)} :=\phi - \theta_k(\phi)\psi_k + \left[  L_M^{-1/2}L\lambda_k^{\beta/2} - \theta_k(\phi) \right]\psi_k. 
    \end{equation*}
  We have $\sum_{\phi_*\in \Phi_M} \P_{\phi_*}\left(\widehat{\theta}_k\ne\theta_k^*\right) = \sum_{\phi_*\in \Phi_M} \P_{\phi_*^{(-k)}}\left(\widehat{\theta}_k\ne\theta_k(\phi_*^{(-k)})\right) $ by symmetry of opposition over the set $\Phi_M$. Hence, 
        \begin{align*}
        \sum_{\phi_* \in \Phi_M} \P_{\phi_*}\left(\widehat{\theta}_k\ne\theta_k^*\right) &= \frac{1}{2}\sum_{\phi_*\in \Phi_M} \left[ \P_{\phi_*}\left(\widehat{\theta}_k\ne\theta_k^*\right) + \P_{\phi_*^{(-k)}}\left(\widehat{\theta}_k\ne \theta_k(\phi_*^{(-k)})\right)\right]  \\
          & \geq \frac{1}{2}\sum_{\phi_*\in \Phi_M} \left( 1 - d_{\mathrm{tv}}\big (\P_{\phi_*},\P_{\phi_*^{(-k)}}\big)\right),
              \end{align*}
    where the inequality follows from applying Lemma \ref{lemma:NP} in the Appendix (Neyman–Pearson) to the hypothesis testing of $\widehat \theta_k = \theta_k(\phi_*^{(-k)}) $ against $\widehat \theta_k = \theta_k^* $.

Meanwhile, since $\E\left[ \left\|R_{\phi^{(-k)}-\phi}[u^m]\right\|_\spaceY^2 \right]= \innerp{\LGbar(\phi^{(-k)}-\phi),\phi^{(-k)}-\phi}_{L_\rho^2}=  L^2 L_M^{-1} \lambda_k^{\beta + 1}$ for all $\phi\in \Phi_M$, Lemma \ref{lem:tvbd} implies that  
    \begin{equation*}
        d_{\mathrm{tv}}\left(\P_{\phi},\P_{\phi^{(-k)}}\right) \leq \frac{1}{2} \, \sqrt{\tau M \E\left[ \left\|R_{\phi^{(-k)}-\phi}[u^m]\right\|_\spaceY^2 \right] } = \frac{1}{2}\sqrt{ \tau M L^2 L_M^{-1} \lambda_k^{\beta + 1} }.
    \end{equation*}
    Thus, $1-  d_{\mathrm{tv}}\left(\P_{\phi_*},\P_{\phi_*^{(-k)}}\right) \geq 1- \frac{1}{2}\sqrt{\tau M L^2 L_M^{-1} \lambda_{n_M}^{\beta + 1}} $ for $n_M\leq k <n_M+L_M$ and $\phi_*\in \Phi_M$. Therefore, we have
    \begin{align*}
              2^{-L_M}\inf_{\widehat \phi\in L^2_\rho}\min_{k}  \sum_{\phi_*\in \Phi_M}\P_{\phi_*}\left(\widehat{\theta}_k\ne\theta_k^*\right) & \geq 
             2^{-L_M-1}\inf_{\widehat \phi\in L^2_\rho}\min_{k}   \sum_{\phi_*\in \Phi_M}\left(1 - d_{\mathrm{tv}}\left(\P_{\phi_*},\P_{\phi_*^{(-k)}}\right) \right)\\
             &  \geq 
              2^{-1}\left(1 - \frac{1}{2}\sqrt{\tau M L^2 L_M^{-1} \lambda_{n_M}^{\beta + 1}}\right), 
    \end{align*}
   which gives \eqref{eq:lb_avg_ProbErr}. 
    \end{proof}

 \begin{proof}[Proof of Lemma \ref{lem:lowerb_finite_est}]
For each estimator $\widehat{\phi}$, we construct $P_M(\widehat{\phi})\in\Phi_M$ such that 
    \begin{equation}\label{eq:tilde_est}
        \|P_M(\widehat{\phi})-\widehat{\phi}\|_{L_\rho^2} = \min_{\phi'\in\Phi_M}\|{\phi'}-\widehat{\phi}\|_{L_\rho^2}.
    \end{equation}
    Then, for $\phi_*\in\Phi_M$, by the triangle inequality,
     \begin{equation*}
     \|P_M(\widehat{\phi})-{\phi_*}\|_{L_\rho^2}\leq \|P_M(\widehat{\phi})-\widehat{\phi}\|_{L_\rho^2} + \|\widehat{\phi}-\phi_*\|_{L_\rho^2}\leq 2\|\widehat{\phi}-\phi_*\|_{L_\rho^2}.
    \end{equation*}
    Taking the expectation and then supremum over $\phi_*\in\Phi_M$, we have 
    $\sup\limits_{\phi_*\in \Phi_M}  \E_{\phi_*}[\|\widehat{\phi}-\phi_*\|_{L_\rho^2}^2] \geq \frac{1}{4} \sup\limits_{\phi_*\in \Phi_M}  \E_{\phi_*}[\|P_M(\widehat{\phi}) -\phi_*\|_{L_\rho^2}^2]$.  
    As a result, taking the infimum over all $\widehat\phi \in L^2_\rho$, we obtain 
\[
\inf_{\widehat \phi\in L^2_\rho}\sup_{\phi_*\in \Phi_M}   \E_{\phi_*}[ \| \widehat{\phi}-\phi_* \|_{L^2_\rho} ^2 ]\geq \frac{1}{4}\inf_{\widehat{\phi}\in\Phi_M}\sup_{\phi_*\in \Phi_M}   \E_{\phi_*}[ \| \widehat{\phi}-\phi_* \|_{L^2_\rho} ^2 ].
\]

To construct $P_M(\widehat{\phi})$ satisfying \eqref{eq:tilde_est} for every $\widehat{\phi} = \sum_{k=1}^\infty \widehat{\theta}_k\psi_k\in L^2_\rho$, we first project $\widehat{\phi}$ on $\mathspan\{\psi_{k}\}_{k=n_M}^{n_M+L_M-1}$ and then map its coefficients $\{\widehat{\theta}_k\}_{k=n_M}^{n_M+L_M-1}$ to the binary sets:

\begin{equation*}
P_M(\widehat{\phi}) = \sum_{k=n_M}^{n_M+L_M-1} \widetilde{\theta}_k\psi_k, \quad \text{ with }\widetilde{\theta}_k = 
\begin{cases}
    0,&\text{ if }\widehat{\theta}_k\leq L_M^{-1}L\lambda_k^{\beta/2}/2;\\
    L_M^{-1}L\lambda_k^{\beta/2},&\text{ if }\widehat{\theta}_k> L_M^{-1}L\lambda_k^{\beta/2}/2.
\end{cases}
\end{equation*}
It is direct to verify that for every $\phi'=\sum_{k=n_M}^{n_M+L_M-1} \theta'_k\psi_k\in\Phi_M$, 
\begin{align*}
    \|{\phi'}-\widehat{\phi}\|_{L_\rho^2}^2 &= \sum_{k=n_M}^{n_M+L_M-1}(\theta'_k-\widehat{\theta}_k)^2 + \bigg(\sum_{k=1}^{n_M-1} + \sum_{k=n_M+L_M}^\infty\bigg)\widehat{\theta}_k^2\\
    &\geq \sum_{k=n_M}^{n_M+L_M-1}(\widetilde{\theta}_k-\widehat{\theta}_k)^2 + \bigg(\sum_{k=1}^{n_M-1} + \sum_{k=n_M+L_M}^\infty\bigg)\widehat{\theta}_k^2 = \|P_M(\widehat{\phi})-\widehat{\phi}\|_{L_\rho^2}^2, 
\end{align*}
and it implies \eqref{eq:tilde_est}. 
\end{proof}

    \appendix
\section{Proof of Lemma \ref{lemma:samplingError} (the bound of the sampling error)}\label{sec:samplingError} 
The tight bound on the sampling error, established in {\rm Lemma \ref{lemma:samplingError}}, is crucial for achieving the minimax upper rate. To derive this result, we decompose the error into two components: the error arising from the orthogonal component and the noise-induced error. The noise-induced error demands careful treatment, which we address in Lemma \ref{Lem:noise_up} below, after the proof of {\rm Lemma \ref{lemma:samplingError}}.

\begin{proof}[Proof of Lemma \ref{lemma:samplingError}]
Recall that $\phi_*=\sum_{k=1}^\infty \theta_k^*\psi_k = (\sum_{k=1}^n+\sum_{k=n+1}^\infty)\theta_k^*\psi_k = \phi^*_{\calH_n} + \phi^*_{\calH_n^\bot}$, and that $\btheta_n^* = (\theta_1^*,\cdots,\theta_n^*)^\top$. Using $f =R_{\phi^*_{\calH_n}}[u] + R_{\phi^*_{\calH_n^\bot}}[u] + \varepsilon $, we decompose $\bbar_{n,M}$ as 
    \begin{align*}
        \bbar_{n,M}(k) &= \frac{1}{M}\sum_{m=1}^M\innerp{f^{m}, R_{\psi_k}[u^{m}]}
        = \frac{1}{M}\sum_{m=1}^M\innerp{R_{\phi^*_{\calH_n}}[u^{m}] + R_{\phi^*_{\calH_n^\bot}}[u^{m}] + \varepsilon^m, R_{\psi_k}[u^{m}]}\\
        & = [\Abar_{n,M}\btheta_{n}^*](k) + \Bar{c}_{n,M}(k) + \Bar{d}_{n,M}(k), 
    \end{align*}
where we denote, for $1\leq k\leq n$,  
\begin{equation}\label{eq:cd}
    \Bar{c}_{n,M}(k) := \frac{1}{M}\sum_{m=1}^M\innerp{R_{\phi^*_{\calH_n^\bot}}[u^{m}], R_{\psi_k}[u^{m}]}_{\spaceY},\ \Bar{d}_{n,M}(k) : = \frac{1}{M}\sum_{m=1}^M\innerp{\varepsilon^m, R_{\psi_k}[u^{m}]}.
    \end{equation}
Therefore, the variance term becomes 
    \begin{align*}
        \E[\|\Abar_{n,M}^{-1}\bbar_{n,M}- \btheta_{n}^*\|^2\mathbf{1}_\mathcal{A}] &= \E[\|\Abar_{n,M}^{-1}(\Bar{c}_{n,M} + \Bar{d}_{n,M})\|^2\mathbf{1}_\mathcal{A}]\\
        &\leq \E[\|\Abar_{n,M}^{-1}\Bar{c}_{n,M}\|^2\mathbf{1}_\mathcal{A}] + \E[\|\Abar_{n,M}^{-1} \Bar{d}_{n,M}\|^2\mathbf{1}_\mathcal{A}].
    \end{align*}
The second term is bounded by Lemma \ref{Lem:noise_up} below. Thus, we only need to show the following bounds for the first term: 
\begin{equation}\label{eq:Ac}
\E[\|\Abar_{n,M}^{-1}\Bar{c}_{n,M}\|^2\mathbf{1}_\mathcal{A}]\leq \frac{16\kappa L^2}{ M}\lambda_n^{\beta-1}\sum_{k=1}^n\lambda_k.
\end{equation}

Since $\|\Abar_{n,M}^{-1}\|\leq 4 \lambda_n^{-1}$ for either case of $\mathcal{A}$, we have 
    \begin{align*}
        \E[\|\Abar_{n,M}^{-1}\Bar{c}_{n,M}\|^2\mathbf{1}_\mathcal{A}]
        & 
        \leq 16 \lambda_n^{-2} \E[\|\Bar{c}_{n,M}\|^2] = 16 \lambda_n^{-2} \sum_{k=1}^n \E[|\Bar{c}_{n,M}(k)|^2]. 
    \end{align*}
By sample independence and that $\E \left[\innerp{R_{\phi^*_{\calH_n^\bot}}[u^{m}], R_{\psi_k}[u^{m}]}_{\spaceY}\right] = \innerp{\LGbar\phi^*_{\calH_n^\bot}, \psi_k}_{L_\rho^2} = 0$, we obtain  
        \begin{align*}
        \E[|\Bar{c}_{n,M}(k)|^2]  = \E\left[\bigg|\frac{1}{M}\sum_{m=1}^M\innerp{R_{\phi^*_{\calH_n^\bot}}[u^{m}], R_{\psi_k}[u^{m}]}_{\spaceY}\bigg|^2\right]
        &= \frac{1}{M} \E \left[\innerp{R_{\phi^*_{\calH_n^\bot}}[u^{m}], R_{\psi_k}[u^{m}]}_{\spaceY}^2\right].
    \end{align*}
Meanwhile, the fourth-moment condition in {\rm Assumption \ref{assum:4thmom}} implies 
    \begin{align*}
        \E \left[\innerp{R_{\phi^*_{\calH_n^\bot}}[u^{m}], R_{\psi_k}[u^{m}]}_{\spaceY}^2\right] &\leq \E \left[\|R_{\phi^*_{\calH_n^\bot}}[u^{m}]\|_{\spaceY}^2\|R_{\psi_k}[u^{m}]\|_\spaceY^2 \right]\\
        &\leq \left( \E \left[\|R_{\phi^*_{\calH_n^\bot}}[u^{m}]\|_{\spaceY}^4\right]\right)^{\frac{1}{2}}\left( \E \left[\|R_{\psi_k}[u^{m}]\|_\spaceY^4\right]\right)^{\frac{1}{2}}\\
        &\leq \kappa\  \E \left[\|R_{\phi^*_{\calH_n^\bot}}[u^{m}]\|_{\spaceY}^2\right]\E \left[\|R_{\psi_k}[u^{m}]\|_\spaceY^2\right]\\
       & \leq \kappa  L^2 \lambda_k \lambda_n^{\beta + 1}, 
    \end{align*}
where the last inequality follows from that $\E \left[\|R_{\psi_k}[u^{m}]\|_\spaceY^2\right]=\innerp{\LGbar\psi_k,\psi_k}_{L^2_\rho} =  \lambda_k$ and similarly, $ \E \left[\|R_{\phi^*_{\calH_n^\bot}}[u^{m}]\|_{\spaceY}^2\right]= \innerp{\LGbar \phi^*_{\calH_n^\bot},\phi^*_{\calH_n^\bot}}_{L_\rho^2}\leq \lambda_{n+1} \|\phi^*_{\calH_n^\bot}\|^2_{L_\rho^2} \leq \lambda_{n+1}^{1+\beta} L^2 $ by {\rm Lemma \ref{lemma:SSS}}. Combining these results, we obtain \eqref{eq:Ac}, completing the proof. 
\end{proof}

The noise-induced error term, $\E[\|\Abar_{n,M}^{-1}\Bar{d}_{n,M}\|^2\mathbf{1}_\mathcal{A}]$, requires careful handling. The conventional approach in \cite{wang2023optimal,tsybakov2008introduction}, which uses the smallest eigenvalue to bound the operator norm of $\Abar_{n,M}^{-1}$ in the well-posed setting, results in a bound that is too loose to achieve the optimal rate (see Remark \ref{rmk:looseBd}). To address this, we employ a singular value decomposition (SVD) to leverage the trace of the normal matrix, enabling a tight bound that ensures the optimal rate.

\begin{lemma}[Tight bound for the noise-induced error]\label{Lem:noise_up}
   Under {\rm Assumptions \ref{assum:model}} and {\rm \ref{assump:noise-X-general}}, 
   the noise-induced error term with $\Bar{d}_{n,M}$ defined in \eqref{eq:cd} satisfies
\begin{equation}
\begin{aligned}
\E[\|\Abar_{n,M}^{-1}\Bar{d}_{n,M}\|^2\mathbf{1}_\mathcal{A}] 
& \leq \sigma^2 \sum_{k=1}^n \E\left[\frac{\lambda_k^{-1}(\Abar_{n,M})}{M} \mathbf{1}_\mathcal{A}\right] \\
& \leq 
\begin{cases} 
\frac{4\sigma^2 n}{M \lambda_n}, & \text{if } \mathcal{A} = \{\lambda_{\min}(\Abar_{n,M}) > \lambda_n / 4\};  \\
\frac{4\sigma^2}{M} \sum_{k=1}^n \lambda_k^{-1}, & \text{if } \mathcal{A} = \{\lambda_k(\Abar_{n,M}) > \lambda_k / 4, \forall k \leq n\}. 
\end{cases}
\end{aligned}
\end{equation}
 In particular, when the noise $\varepsilon$ satisfies $\E[\langle \varepsilon, y \rangle^2] = \|y\|_{\spaceY}^2$ for each $y \in \spaceY$, the bound is tight in the sense that the first inequality becomes an equality.
\end{lemma}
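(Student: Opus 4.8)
The plan is to condition on the inputs $\{u^m\}_{m=1}^M$, identify the conditional second‑moment matrix of the noise vector $\Bar{d}_{n,M}$ using the quadratic‑form bound \eqref{ineq:noise_up}, and then reduce the estimate to a trace‑monotonicity inequality for positive semidefinite matrices; the capitalized bound then follows by plugging in the definition of $\mathcal{A}$.

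Set $\mathcal{U}:=\sigma(u^1,\dots,u^M)$ and write $\Bar{d}_{n,M}=\tfrac1M\sum_{m=1}^M D_m$ with $D_m\in\R^n$, $(D_m)_k=\innerp{\varepsilon^m,R_{\psi_k}[u^m]}$. Since $\Abar_{n,M}$, hence the event $\mathcal{A}$ and the operator $\Abar_{n,M}^{-2}$ (read as $0$ off $\mathcal{A}$, on which $\Abar_{n,M}$ is invertible by definition of $\mathcal{A}$), are $\mathcal{U}$‑measurable, and since the samples are i.i.d.\ and $\varepsilon$ is centered and independent of $u$, the cross terms $\E[D_mD_{m'}^{\top}\mid\mathcal{U}]$ vanish for $m\ne m'$, so $\E[\Bar{d}_{n,M}\Bar{d}_{n,M}^{\top}\mid\mathcal{U}]=\tfrac1{M^2}\sum_m\E[D_mD_m^{\top}\mid\mathcal{U}]$. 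Using $\norm{\Abar_{n,M}^{-1}\Bar{d}_{n,M}}^2=\tr(\Abar_{n,M}^{-2}\Bar{d}_{n,M}\Bar{d}_{n,M}^{\top})$ and the tower property, $\E[\norm{\Abar_{n,M}^{-1}\Bar{d}_{n,M}}^2\mathbf{1}_\mathcal{A}]=\E\big[\mathbf{1}_\mathcal{A}\,\tr(\Abar_{n,M}^{-2}\,\E[\Bar{d}_{n,M}\Bar{d}_{n,M}^{\top}\mid\mathcal{U}])\big]$. Now for any $v\in\R^n$, linearity of $R_\phi$ in $\phi$ gives $v^{\top}D_m=\innerp{\varepsilon^m,R_{\phi_v}[u^m]}$ with $\phi_v=\sum_{k=1}^n v_k\psi_k$, so applying \eqref{ineq:noise_up} with the (after conditioning, fixed) element $y=R_{\phi_v}[u^m]$ yields $v^{\top}\E[D_mD_m^{\top}\mid\mathcal{U}]v\le\sigma^2\norm{R_{\phi_v}[u^m]}_\spaceY^2=\sigma^2\,v^{\top}G_m v$, where $(G_m)_{kl}=\innerp{R_{\psi_k}[u^m],R_{\psi_l}[u^m]}_\spaceY$ and $\Abar_{n,M}=\tfrac1M\sum_m G_m$. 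Hence $\E[D_mD_m^{\top}\mid\mathcal{U}]\preceq\sigma^2 G_m$, so $\E[\Bar{d}_{n,M}\Bar{d}_{n,M}^{\top}\mid\mathcal{U}]\preceq\tfrac{\sigma^2}{M}\Abar_{n,M}$; since $\Abar_{n,M}^{-2}\succeq0$ and $\tr(AB)\le\tr(AC)$ for $A\succeq0$, $B\preceq C$, we get $\tr(\Abar_{n,M}^{-2}\E[\Bar{d}_{n,M}\Bar{d}_{n,M}^{\top}\mid\mathcal{U}])\le\tfrac{\sigma^2}{M}\tr(\Abar_{n,M}^{-1})=\tfrac{\sigma^2}{M}\sum_{k=1}^n\lambda_k^{-1}(\Abar_{n,M})$, which is the first inequality. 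Equivalently, diagonalizing $\Abar_{n,M}=U\Sigma U^{\top}$ by an SVD and bounding $\E[(U^{\top}\Bar{d}_{n,M})_k^2\mid\mathcal{U}]\le\tfrac{\sigma^2}{M}\lambda_k(\Abar_{n,M})$ termwise gives the same bound; this is where the SVD replaces the crude $n\,\norm{\Abar_{n,M}^{-1}}$ estimate by the sharper trace.

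For the two explicit bounds, note that on $\mathcal{A}$ one has $\lambda_k(\Abar_{n,M})\ge\lambda_{\min}(\Abar_{n,M})>\lambda_n/4$ for all $k\le n$ in the polynomial case, so $\sum_{k=1}^n\lambda_k^{-1}(\Abar_{n,M})<4n/\lambda_n$, while in the exponential case $\lambda_k(\Abar_{n,M})>\lambda_k/4$ for each $k\le n$ directly, so $\sum_{k=1}^n\lambda_k^{-1}(\Abar_{n,M})<4\sum_{k=1}^n\lambda_k^{-1}$; substituting and using $\P(\mathcal{A})\le1$ yields the displayed cases. For the equality claim, if $\E[\innerp{\varepsilon,y}^2]=\norm{y}_\spaceY^2$ for every $y$ then \eqref{ineq:noise_up} holds with equality and $\sigma=1$, so $\E[D_mD_m^{\top}\mid\mathcal{U}]=G_m$ (a symmetric matrix is determined by its quadratic form), hence $\E[\Bar{d}_{n,M}\Bar{d}_{n,M}^{\top}\mid\mathcal{U}]=\tfrac1M\Abar_{n,M}$ and every subsequent inequality becomes an equality.

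The main obstacle is obtaining the Loewner‑order bound $\E[D_mD_m^{\top}\mid\mathcal{U}]\preceq\sigma^2 G_m$ rather than an entrywise or operator‑norm bound: this is exactly what turns $n\,\norm{\Abar_{n,M}^{-1}}$ into $\tr(\Abar_{n,M}^{-1})$ and produces the tight rate. It hinges on applying \eqref{ineq:noise_up} with the \emph{data‑dependent} test element $R_{\phi_v}[u^m]$, which is legitimate only after conditioning on $\mathcal{U}$ and invoking $\varepsilon\perp u$ together with linearity of $R_\phi$ in $\phi$; handling the conditioning and the $\mathcal{U}$‑measurability of $\mathbf{1}_\mathcal{A}$ and $\Abar_{n,M}^{-2}$ carefully is the only delicate bookkeeping.
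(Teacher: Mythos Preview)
Your proof is correct and follows essentially the same approach as the paper: both condition on the inputs, exploit the linearity $v^\top D_m=\innerp{\varepsilon^m,R_{\phi_v}[u^m]}$ together with the noise bound \eqref{ineq:noise_up}, and reduce the estimate to $\tfrac{\sigma^2}{M}\tr(\Abar_{n,M}^{-1})$; the paper carries this out via an explicit SVD while you phrase it as the Loewner bound $\E[D_mD_m^\top\mid\mathcal{U}]\preceq\sigma^2 G_m$ plus trace monotonicity, which you yourself note is equivalent. Your choice to condition on the full $\sigma$-algebra $\mathcal{U}=\sigma(u^1,\dots,u^M)$ rather than on $\Abar_{n,M}$ is in fact slightly cleaner, since the intermediate quantities $R_{\psi_l}[u^m]$ are $\mathcal{U}$-measurable but not $\sigma(\Abar_{n,M})$-measurable.
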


\begin{remark}\label{rmk:looseBd}
The tight bound for noise-induced error is crucial for achieving the optimal rate since this term dominates the sampling error. We illustrate it when $\lambda_k\asymp k^{-2r}$ for $k\geq 1$. Recall that the other part in the sampling error, as bounded in \eqref{eq:Ac}, is of order $O\left(\frac{1}{M}\lambda_n^{\beta-1}\sum_{k=1}^n\lambda_k\right) = O\left(\frac{n}{M\lambda_n}\lambda_n^{\beta}\right)$. Then, the noise-induced error, which is of order $ O\left(\frac{n}{M\lambda_n}\right)= O\left(\frac{n^{1+2r}}{M}\right)$, dominates the sampling error and leads to the optimal rate. In contrast, the common approach of bounding the operator norm $\|\Abar_{n,M}^{-1} \|_{op}$ results in a suboptimal bound. Specifically, since $\|\Abar_{n,M}^{-1} \|_{op}\leq 4\lambda_n^{-1}$ on $\calA$ and $\E[|\Bar{d}_{n,M}(k)|^2] \leq \sigma^2M^{-1} \lambda_k$, this operator norm bound gives  
\begin{equation*}
	\E[\|\Abar_{n,M}^{-1}\Bar{d}_{n,M}\|^2\mathbf{1}_\mathcal{A}]\leq 16 \lambda_n^{-2} \E[\|\Bar{d}_{n,M}\|^2] = 16\sigma^2 \frac{1}{M\lambda_n^2} \sum_{k=1}^n\lambda_k, 
\end{equation*}
which is of order $O\left(\frac{n^{4r}}{M}\right)$, $O\left(\frac{n^2\log n}{M}\right)$, and $O\left(\frac{n^{1+2r}}{M}\right)$ for $r>\frac{1}{2}$, $r=\frac{1}{2}$, and $r<\frac{1}{2}$, respectively. Thus, the resulting order is strictly larger than our $O\left(\frac{n^{1+2r}}{M}\right)$ except when $r<\frac{1}{2}$.
 
Furthermore, the tight bound discussed above and the results in {\rm Lemma \ref{lemma:samplingError}} hold for all $r \geq 0$ because their proofs do not rely on any spectral decay condition of the normal operator. Notably, these results remain valid even when the normal operator is the identity operator, as in classical regression. Consequently, our sampling error estimation is directly applicable to classical regression.
\end{remark}

\begin{proof}[Proof of Lemma \ref{Lem:noise_up}]
    Apply SVD to $\Abar_{n,M}$, we obtain 
    \[
    \Abar_{n,M}=\Tilde{U}^\top\Tilde{\Sigma}\Tilde{U}
    \]
    with $\Tilde{\Sigma} = {\rm{diag}}(\lambda_1(\Abar_{n,M}),\cdots,\lambda_n(\Abar_{n,M}))$  and $\Tilde{U}=(\Tilde{u}_{kl})_{1\leq k,l\leq n}\in\R^{n\times n}$ being the real unitary matrix consisting of orthonormal eigenvectors of $\Abar_{n,M}$.
    
      Then, the noise-induced error term can be computed as
    \begin{align*}
        \E[\|\Abar_{n,M}^{-1} \Bar{d}_{n,M}\|^2\mathbf{1}_\mathcal{A}] 
        &= \E\left[\|\Tilde{U}^\top\Tilde{\Sigma}^{-1}\Tilde{U} \Bar{d}_{n,M}\|^2\mathbf{1}_\mathcal{A}\right] = \E\left[\|\Tilde{\Sigma}^{-1}\Tilde{U} \Bar{d}_{n,M}\|^2\mathbf{1}_\mathcal{A}\right] \\
       & = \E\left[ \sum_{k=1}^n \lambda_k^{-2}(\Abar_{n,M})\big(\Tilde{U} \Bar{d}_{n,M}\big)_k^2 \mathbf{1}_\mathcal{A} \right] \\
       & = \sum_{k=1}^n  \Ebracket{ \E[  \lambda_k^{-2}(\Abar_{n,M})\big(\Tilde{U} \Bar{d}_{n,M}\big)_k^2 \mathbf{1}_\mathcal{A} \mid \Abar_{n,M}] } \\
       & = \sum_{k=1}^n  \Ebracket{\lambda_k^{-2}(\Abar_{n,M})\mathbf{1}_\mathcal{A} \E[ \big(\Tilde{U} \Bar{d}_{n,M}\big)_k^2  \mid \Abar_{n,M}] }. 
    \end{align*}
To compute $\E[ \big(\Tilde{U} \Bar{d}_{n,M}\big)_k^2  \mid \Abar_{n,M}]$, note that 
    \begin{align*}
        \big(\Tilde{U} \Bar{d}_{n,M}\big)_k &= \sum_{l=1}^n \Tilde{u}_{kl}\Bar{d}_{n,M}(l)= \sum_{l=1}^n \Tilde{u}_{kl}  \frac{1}{M}\sum_{m=1}^M\innerp{\varepsilon^m, R_{\psi_l}[u^{m}]} = \frac{1}{M}\sum_{m=1}^M\innerp{\varepsilon^m, \sum_{l=1}^n \Tilde{u}_{kl}R_{\psi_l}[u^{m}]}. 
    \end{align*}
   Then, since $\varepsilon^m$ is independent of $u^m$, {\rm Assumption \ref{assump:noise-X-general} \ref{assum:noise_up}} implies 
       \begin{equation}\label{eq:Asvd_core}
       \begin{aligned}
        \E[ \big(\Tilde{U} \Bar{d}_{n,M}\big)_k^2  \mid \Abar_{n,M}] 
        & =   \frac{1}{M^2}\sum_{m=1}^M \Ebracket{ \innerp{\varepsilon^m, \sum_{l=1}^n \Tilde{u}_{kl}R_{\psi_l}[u^{m}]}^2\mid \Abar_{n,M}} \\
        & \leq \sigma^2 \frac{1}{M^2}\sum_{m=1}^M  \left\|\sum_{l=1}^n \Tilde{u}_{kl}R_{\psi_l}[u^{m}]\right\|_{\spaceY}^2  \\ 
        & = \sigma^2 \frac{1}{M}  [\Tilde{u}_{k1},\cdots,\Tilde{u}_{kn}]\Abar_{n,M}\begin{bmatrix}
            \Tilde{u}_{k1}\\
            \vdots\\
            \Tilde{u}_{kn}
        \end{bmatrix} = \sigma^2 \frac{1}{M} \lambda_{k}(\Abar_{n,M}) . 
       \end{aligned}
       \end{equation}
Thus, by collecting the above estimates, we arrive at
    \begin{align*}
        \E[\|\Abar_{n,M}^{-1} \Bar{d}_{n,M}\|^2\mathbf{1}_\mathcal{A}] 
        &\leq \sigma^2\sum_{k=1}^n\Ebracket{\frac{\lambda_k^{-1}(\Abar_{n,M})}{M}\mathbf{1}_\mathcal{A}} 
         \leq \frac{4\sigma^2 n}{M\lambda_n}\text{ or }\frac{4\sigma^2 }{M}\sum_{k=1}^n \lambda_k^{-1}
    \end{align*}
    for $\calA = \{\lambda_{\min}  (\Abar_{n,M})>\lambda_n/4 \}$ or $ \{\lambda_k(\Abar_{n,M})>\lambda_k/4,\forall k\leq n\}$, respectively. 
    
    In particular, when $\E[\innerp{\varepsilon,y}^2] = \|y\|_\spaceY^2$ for each $y\in \spaceY$, \eqref{eq:Asvd_core} becomes an equality, so does the first inequality above. This completes the proof. 
\end{proof}

\section{Left-tail probability of the smallest eigenvalue}\label{sec:PAC}
In this section, we employ a relaxed PAC-Bayesian inequality to establish the left-tail probability bound for the smallest eigenvalue of the normal matrix $\Abar_{n,M}$ in \eqref{eq:Ab}. Our approach follows the framework developed in \cite{wang2023optimal}, but with a notable relaxation of the entry-wise boundedness assumption on random matrices.  Specifically, instead of requiring the matrix entries to be almost surely bounded, we impose only the fourth-moment condition in {\rm Assumption \ref{assum:4thmom}}.  
This relaxation is made possible through the use of eigenfunctions of the normal operator to make $\Abar_{n,\infty}$  diagonal and through careful treatment of the random trace term that emerges when applying the PAC-Bayesian inequality.

\subsection{The PAC-Bayesian inequality and preliminaries}

Our primary tool is the following PAC-Bayesian inequality; see, e.g., \cite{Mourtada2022,Oliveira2016,AudibertCatoni2011}.
\begin{lemma}[PAC-Bayesian inequality]\label{lemma:PAC_Bay}
	Let $\Theta$ be a measurable space, and $\{Z(\theta):\theta\in \Theta\}$ be a real-valued measurable process. Assume that 
	\begin{equation}\label{Eq:PAC_Assu}
		\E[\exp(Z(\theta))]\leq 1\,,\quad \text{for every } \theta\in\Theta\,.
	\end{equation}
	Let $\pi$ be a probability measure on $\Theta$. Then,
	\begin{align}\label{Eq:PAC_Ineq}
		\P\left\{\forall \mu \in \mathcal{P}, \int_{\Theta} Z(\theta) \mu(\theta)\leq \textup{KL}(\mu,\pi)+t\right\}\geq 1-e^{-t}\,,
	\end{align}
	where $\mathcal{P}$ is the set of all probability measures on $\Theta$, and $\textup{KL}(\mu,\pi)$ is the Kullback-Leibler divergence between $\mu$ and $\pi$ defined by
$		\textup{KL}(\mu,\pi):=\begin{cases}
			\int_{\Theta} \log\Big[\frac{d\mu}{d\pi}\Big] d\mu & \text{if }\mu \ll \pi \,; \\
			\infty & \text{otherwise}\,.
		\end{cases}
$
\end{lemma}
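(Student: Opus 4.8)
The plan is to reduce \eqref{Eq:PAC_Ineq} to two classical ingredients: an exponential-moment bound obtained from Markov's inequality, and the Gibbs (Donsker--Varadhan) variational representation of the Kullback--Leibler divergence. The probabilistic content is entirely in the first; the second is a deterministic inequality valid pointwise on the probability space.

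First I would integrate the hypothesis \eqref{Eq:PAC_Assu} against $\pi$. Since $(\omega,\theta)\mapsto \exp(Z(\theta)(\omega))$ is nonnegative and jointly measurable, Tonelli's theorem gives
\[
\E\left[\int_\Theta \exp(Z(\theta))\,\pi(d\theta)\right] = \int_\Theta \E[\exp(Z(\theta))]\,\pi(d\theta) \le 1 .
\]
Writing $W := \int_\Theta \exp(Z(\theta))\,\pi(d\theta)$, this says $\E[W]\le 1$, so Markov's inequality yields $\P\{W > e^{t}\}\le e^{-t}\E[W]\le e^{-t}$. Hence on the event $\Omega_t := \{W\le e^{t}\}$, which satisfies $\P(\Omega_t)\ge 1-e^{-t}$, we have the deterministic bound $\log W \le t$.

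Second I would establish the variational inequality: for every $\mu\in\mathcal{P}$,
\[
\int_\Theta Z(\theta)\,\mu(d\theta) \;\le\; \kl{\mu,\pi} + \log \int_\Theta \exp(Z(\theta))\,\pi(d\theta).
\]
If $\mu\not\ll\pi$ the right-hand side is $+\infty$ and there is nothing to prove. If $\mu\ll\pi$ with density $h=d\mu/d\pi$, then $\mu(\{h>0\})=1$ and on $\{h>0\}$ one writes $Z - \log h = \log(e^{Z}/h)$, so that
\[
\int_\Theta Z\,d\mu - \kl{\mu,\pi} = \int_{\{h>0\}} \log\!\Big(\frac{e^{Z}}{h}\Big)\,d\mu \;\le\; \log\!\int_{\{h>0\}} \frac{e^{Z}}{h}\,d\mu \;=\; \log\!\int_{\{h>0\}} e^{Z}\,d\pi \;\le\; \log\!\int_\Theta e^{Z}\,d\pi,
\]
where the middle step is Jensen's inequality for the concave map $\log$ applied under the probability measure $\mu$. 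A minor measure-theoretic point is to ensure $\int_\Theta Z\,d\mu$ is well defined: one splits $Z$ into positive and negative parts and observes that the claimed bound is vacuous whenever $\int Z^{+}\,d\mu=+\infty$; this — together with justifying Jensen's step when the integrand is unbounded — is the only place requiring real care, and it is the main (and mild) obstacle.

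Finally I would combine the two steps. On $\Omega_t$, for every $\mu\in\mathcal{P}$,
\[
\int_\Theta Z(\theta)\,\mu(d\theta) \le \kl{\mu,\pi} + \log W \le \kl{\mu,\pi} + t ,
\]
which is precisely the event appearing in \eqref{Eq:PAC_Ineq}; since $\P(\Omega_t)\ge 1-e^{-t}$, the proof is complete. Everything outside the variational inequality is a one-line application of Tonelli followed by Markov, so no further difficulty is expected.
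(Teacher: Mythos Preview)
The paper does not give its own proof of this lemma; it states the result and cites \cite{Mourtada2022,Oliveira2016,AudibertCatoni2011}. Your argument is correct and is precisely the standard proof found in those references: Tonelli plus Markov to control $\log\int_\Theta e^{Z}\,d\pi$ with high probability, followed by the Donsker--Varadhan variational inequality applied pointwise. Nothing is missing.
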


We will apply the above PAC-Bayesian inequality to the process  
\[
 Z(\theta)= -\lambda \sum_{m=1}^M\|R_{\phi_\theta}[u^m]\|_\spaceY^2 +  \alpha M =M\lambda (- \theta^\top \overline{A}_{n,M}\theta + \alpha ), \quad  \theta\in \Theta=S^{n-1}, 
\]
 where $\lambda$ and $\alpha$ are properly selected constants, along with properly selected measures $\mu$ and $\pi$ such that the KL divergence $\textup{KL}(\mu,\pi)$ and the integral $\int_{\Theta} Z(\theta) \mu(\theta)$ can be controlled. We need the following lemmas on the exponential integrability of $Z(\theta)$, a lower bound for the trace of the normal matrix, and a control for the approximation term in the PAC-Bayesian inequality.   

 \begin{lemma}\label{lemma:mfg_4thmmt}
	Under the fourth-moment condition \eqref{ineq:4thmom} in {\rm Assumption \ref{assum:4thmom}}, we have 
	      \begin{align}\label{eq:mgf_4thmmt}
        \E\left[\exp\left(-\lambda \|R_{\phi}[u]\|_\spaceY^2\right)\right]\leq 
        \exp\left(-\lambda \E[\|R_{\phi}[u]\|_\spaceY^2] +\frac{\kappa\lambda^2}{2} \E[\|R_{\phi}[u]\|_\spaceY^2]^2 \right)
    \end{align}
for all $\phi\in H^\beta_\rho$ and $\lambda>0$. 
\end{lemma}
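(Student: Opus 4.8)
The plan is to reduce the statement to a one–dimensional elementary inequality applied to the nonnegative random variable $X := \|R_{\psi}[u]\|_\spaceY^2$. Setting $\mu_2 := \E[X] = \E[\|R_{\psi}[u]\|_\spaceY^2] = \innerp{\LGbar\psi,\psi}_{L^2_\rho}$, the fourth-moment condition \eqref{ineq:4thmom} in Assumption \ref{assum:4thmom} reads $\E[X^2] = \E[\|R_{\psi}[u]\|_\spaceY^4] \leq \kappa\,\mu_2^2$, so the claimed bound \eqref{eq:mgf_4thmmt} is exactly $\E[e^{-\lambda X}] \leq \exp\bigl(-\lambda\mu_2 + \tfrac{\kappa\lambda^2}{2}\mu_2^2\bigr)$ for $\lambda>0$.

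First I would record the pointwise estimate $e^{-t} \leq 1 - t + \tfrac{t^2}{2}$ for all $t \geq 0$. This follows by examining $h(t) := 1 - t + \tfrac{t^2}{2} - e^{-t}$: one has $h(0)=h'(0)=0$ and $h''(t) = 1 - e^{-t} \geq 0$ on $[0,\infty)$, so $h'$ is nondecreasing and hence nonnegative on $[0,\infty)$, which makes $h$ nondecreasing and therefore $h(t)\geq 0$ there. Applying this with $t = \lambda X \geq 0$ (using $\lambda>0$ and $X\geq 0$) and taking expectations gives $\E[e^{-\lambda X}] \leq 1 - \lambda\mu_2 + \tfrac{\lambda^2}{2}\E[X^2]$; invoking $\E[X^2] \leq \kappa\mu_2^2$ then yields $\E[e^{-\lambda X}] \leq 1 - \lambda\mu_2 + \tfrac{\kappa\lambda^2}{2}\mu_2^2$.

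Finally I would close the argument with the standard bound $1 + y \leq e^y$ (valid for every real $y$), applied to $y = -\lambda\mu_2 + \tfrac{\kappa\lambda^2}{2}\mu_2^2$, obtaining $\E[e^{-\lambda X}] \leq \exp\bigl(-\lambda\mu_2 + \tfrac{\kappa\lambda^2}{2}\mu_2^2\bigr)$, which is \eqref{eq:mgf_4thmmt} after substituting back $\mu_2 = \E[\|R_{\psi}[u]\|_\spaceY^2]$.

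There is no genuine obstacle here; the proof is a short chain of elementary inequalities. The only point deserving a line of care is the applicability of the fourth-moment condition: \eqref{ineq:4thmom} is stated for $\phi\in H^\beta_\rho$, whereas the lemma is phrased for $\psi\in L^2_\rho$. In every invocation of this lemma in Section \ref{sec:prob_cutoff} the relevant function is of the form $\phi_\theta = \sum_{k=1}^n \theta_k\psi_k$ with $\theta\in S^{n-1}$, which belongs to $\calH_n\subset H^\beta_\rho$, so the condition applies; alternatively, the Gaussian computation following Assumption \ref{assum:4thmom} in fact establishes \eqref{ineq:4thmom} on all of $L^2_\rho$, under which the lemma holds as stated.
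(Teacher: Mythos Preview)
Your proof is correct and follows exactly the paper's approach: use $e^{-y}\le 1-y+\tfrac{y^2}{2}$ for $y\ge 0$, take expectations, apply the fourth-moment bound, and finish with $1+y\le e^y$. Your closing remark about the discrepancy between $\psi\in L^2_\rho$ in the lemma and $\phi\in H^\beta_\rho$ in Assumption \ref{assum:4thmom} is a valid observation that the paper does not address explicitly.
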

\begin{proof}
	 By using the inequalities $ e^{-y} \leq 1-y+ \frac{1}{2}y^2$ for all $y\geq 0$ and $1+y\leq e^y$ for all $y\in \R$, we have, for a square-integrable nonnegative random variable $X$ and $\lambda >0$, 
	  \[ \E[e^{-\lambda X}]\leq 1- \lambda \E[X] + \frac{\lambda^2}{2} \E[X^2] \leq  e^{-\lambda \E[X]+\frac{\lambda^2}{2}\E[X^2]} .
	  \] Applying it with $X=\|R_{\phi}[u]\|_\spaceY^2$ and the fourth-moment condition \eqref{ineq:4thmom}, we obtain \eqref{eq:mgf_4thmmt}. 
\end{proof}

\begin{lemma}\label{cor:tr}
    Under {\rm Assumption \ref{assum:4thmom}}, the normal matrix $\Abar_{n,M}$ defined in \eqref{eq:Ab} satisfies
    \begin{equation}\label{ineq:trlearning}
        \P\left\{\frac{\tr(\Abar_{n,M})}{n} \geq \lambda_1 + 1\right\} \leq \frac{\kappa\lambda_1^2}{M},
    \end{equation}
    where $\lambda_1$ is the largest eigenvalue of $\E[\Abar_{n,M}]$. 
\end{lemma}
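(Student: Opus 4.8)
The plan is to establish \eqref{ineq:trlearning} by a second-moment (Chebyshev) estimate, with the fourth-moment condition supplying control of the variance of the trace. First I would write the trace as an empirical mean of i.i.d.\ nonnegative summands: since
$$\tr(\Abar_{n,M}) = \frac1M\sum_{m=1}^M Y^m, \qquad Y^m := \sum_{k=1}^n \big\|R_{\psi_k}[u^m]\big\|_\spaceY^2,$$
and since by Lemma \ref{Lem:projEst} each $Y^m$ has mean $\sum_{k=1}^n \innerp{\LGbar\psi_k,\psi_k}_{L^2_\rho} = \sum_{k=1}^n\lambda_k \le n\lambda_1$, it follows that $\E\big[\tr(\Abar_{n,M})/n\big] = \frac1n\sum_{k=1}^n\lambda_k \le \lambda_1$. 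Hence the event in \eqref{ineq:trlearning} is contained in $\big\{\tr(\Abar_{n,M})/n - \E[\tr(\Abar_{n,M})/n] \ge 1\big\}$, and Chebyshev's inequality (in the form $\P\{Z-\E Z\ge 1\}\le \var(Z)$) together with independence of the samples gives
$$\P\left\{\frac{\tr(\Abar_{n,M})}{n} \ge \lambda_1+1\right\} \le \var\!\left(\frac{\tr(\Abar_{n,M})}{n}\right) = \frac{1}{n^2 M}\,\var(Y^1) \le \frac{1}{n^2 M}\,\E\big[(Y^1)^2\big].$$

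Next I would bound the second moment $\E[(Y^1)^2] = \sum_{k,l=1}^n \E\big[\|R_{\psi_k}[u]\|_\spaceY^2\,\|R_{\psi_l}[u]\|_\spaceY^2\big]$. Applying the Cauchy--Schwarz inequality in $L^2(\Omega)$ to each summand and then the fourth-moment condition \eqref{ineq:4thmom} (valid since each $\psi_k\in H^\beta_\rho$), which yields $\E[\|R_{\psi_k}[u]\|_\spaceY^4]\le\kappa\,(\E[\|R_{\psi_k}[u]\|_\spaceY^2])^2 = \kappa\lambda_k^2$, every term is bounded by $\kappa\lambda_k\lambda_l$. Therefore $\E[(Y^1)^2]\le\kappa\big(\sum_{k=1}^n\lambda_k\big)^2\le\kappa\,n^2\lambda_1^2$, and substituting into the previous display gives $\var(\tr(\Abar_{n,M})/n)\le \kappa\lambda_1^2/M$, which is exactly \eqref{ineq:trlearning}.

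I do not expect a genuine obstacle here; the argument is a routine variance bound. The only point demanding a little care is the decoupling of the cross terms $\E[\|R_{\psi_k}[u]\|_\spaceY^2\,\|R_{\psi_l}[u]\|_\spaceY^2]$ for $k\neq l$: the random quantities $\|R_{\psi_k}[u]\|_\spaceY^2$ need not be uncorrelated, so one cannot discard the off-diagonal terms and must instead bound every pair uniformly by $\kappa\lambda_k\lambda_l$ via Cauchy--Schwarz and Assumption \ref{assum:4thmom}. It is also worth recording at the outset that $\lambda_1$ in the statement is the largest eigenvalue of $\E[\Abar_{n,M}] = \mathrm{diag}(\lambda_1,\dots,\lambda_n)$, i.e.\ the leading eigenvalue of $\LGbar$, which is precisely what Lemma \ref{Lem:projEst} supplies.
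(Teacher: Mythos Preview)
Your proposal is correct and follows essentially the same route as the paper's proof: write the trace as an average of i.i.d.\ summands, apply Chebyshev, and bound the variance via the fourth-moment condition. The only cosmetic difference is that the paper bounds $\var(Y^1)$ using $\var\big(\sum_k X_k\big)\le n\sum_k\var(X_k)$ together with $\var(\|R_{\psi_k}[u]\|_\spaceY^2)\le(\kappa-1)\lambda_k^2$, whereas you bound $\E[(Y^1)^2]$ directly via Cauchy--Schwarz on the cross terms; both reach the same final estimate $\kappa\lambda_1^2/M$.
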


\begin{proof}[Proof of Lemma \ref{cor:tr}]
Recall that $\Abar_{n,M} = \frac{1}{M}\sum_{m=1}^M A^m$, where $A^m$ are i.i.d.~matrices with entries $A^m (k,l) = \innerp{R_{\psi_k}[u^m],R_{\psi_l}[u^m]}$ for $1\leq k,l\leq n$. 

  The Fourth-moment {\rm Assumption \ref{assum:4thmom}} implies that 
  $$\var (A^m(k,k)) = \E\left[\|R_{\psi_{k}}[u^m]\|^4\right] - \left(\E\left[\|R_{\psi_{k}}[u^m]\|_\spaceY^2\right]\right)^2 \leq (\kappa-1 )\lambda_k^2. 
  $$
  Then, together with the fact that $\var ( \tr(\Abar_{n,M})) = \frac{1}{M}\var(\tr(A^m))$, we have 
    \begin{align*}
       \var ( \tr(\Abar_{n,M})) & = \frac 1M  \var(\tr(A^m)) = \frac 1M  \var\left(\sum_{k=1}^n A^m(k,k) \right) \leq \frac{n}{M} \sum_{k=1}^n \var (A^m(k,k))\\
        & \leq \frac n M  (\kappa - 1) \sum_{k=1}^n \lambda_k^2. 
    \end{align*}
Consequently, Chebyshev's equality and the fact that $ \E [\tr(\Abar_{n,M}) ]= \sum_{k=1}^n\lambda_k $ imply
    \begin{align*}
        \P\left\{\frac{\tr(\Abar_{n,M})}{n} \geq \lambda_1 + 1\right\} 
        &\leq \P\left\{\tr(\Abar_{n,M}) \geq \sum_{k=1}^n\lambda_k + n\right\} \\
        &\leq \frac{\var( \tr(\Abar_{n,M}) )}{n^2} \leq \frac{\kappa-1}{nM} \sum_{k=1}^n \lambda_k^2 
         \leq  \frac{\kappa  \lambda_1^2}{M}.
    \end{align*}
The proof is completed.
\end{proof}

The next lemma, from \cite[Supplementary Section 2.3]{Mourtada2022} (see also in \cite{wang2023optimal} for a constructive proof), controls the approximate term in the application of the PAC-Bayesian inequality. 
\begin{lemma}\label{Lem:F_Sigma} 
	For every $\gamma\in(0,1/2]$, $v\in S^{n-1}$ with $n\geq 2$, define 
	\begin{equation}\label{eq:prob_measures}
	\Theta_{v,\gamma}:=\{\theta\in S^{n-1}: \|\theta-v\|\leq \gamma \} \quad \text{  and } \quad	\pi_{v,\gamma}(d\theta):=\frac{\bf1_{\Theta_{v,\gamma}}(\theta)}{\pi(\Theta_{v,\gamma})}\pi(d\theta), 
	\end{equation}
	where $\pi$ is the uniform distribution on the sphere. That is, $\Theta_{v,\gamma}$ is a ``spherical cap'' or ``contact lens'' in $n$-dimensional space, and $\pi_{v,\gamma}$ is a uniform surface distribution on the spherical cap. Then, 
\begin{align*}
	F_{v,\gamma}(\Sigma):= \int_{\Theta}\innerp{\Sigma \theta, \theta} \pi_{v,\gamma}(d\theta)=[1-h(\gamma)]\innerp{\Sigma v, v}+h(\gamma) \frac{\textup{Tr}(\Sigma)}{n}, 
\end{align*} 
for every symmetric matrix $\Sigma$, where 
\begin{equation}\label{eq:h_gamma}
	h(\gamma)= \frac{n}{n-1}\int_{\Theta}[1-\innerp{\theta,v}^2]\pi_{v,\gamma}(d\theta) \in \bigg[0,\frac{n \gamma^2}{n-1}\bigg]\,.
\end{equation}
\end{lemma}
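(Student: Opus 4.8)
The plan is to exploit the rotational symmetry of the measure $\pi_{v,\gamma}$ about the axis $v$. Let $G = \{R \in O(n) : Rv = v\}$; since the uniform measure $\pi$ on $S^{n-1}$ is $O(n)$-invariant and $\|R\theta - v\| = \|R\theta - Rv\| = \|\theta - v\|$ for $R \in G$, the set $\Theta_{v,\gamma}$ and hence the probability measure $\pi_{v,\gamma}$ are $G$-invariant. Writing $\langle \Sigma\theta,\theta\rangle = \mathrm{Tr}(\Sigma\,\theta\theta^\top)$, it therefore suffices to understand the second-moment matrix $Q := \int_\Theta \theta\theta^\top\,\pi_{v,\gamma}(d\theta)$, because $F_{v,\gamma}(\Sigma) = \mathrm{Tr}(\Sigma Q)$.

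First I would decompose $\mathbb{R}^n = \mathrm{span}(v) \oplus v^\perp$ and show that $Q$ respects this splitting and is scalar on each piece, i.e.\ $Q = a\,vv^\top + b\,(I - vv^\top)$ for scalars $a,b$. This follows from $G$-invariance: for $R \in G$ one has $RQR^\top = \int (R\theta)(R\theta)^\top \pi_{v,\gamma}(d\theta) = Q$, so $Q$ commutes with every element of $G$; choosing suitable reflections in $G$ (one flipping a chosen unit vector $w \perp v$ while fixing $v$ and $\{v,w\}^\perp$ pointwise) kills the off-diagonal block $\langle Qv, w\rangle$ and all off-diagonal entries within $v^\perp$, while reflections swapping two orthonormal vectors of $v^\perp$ force the diagonal entries on $v^\perp$ to coincide. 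Equivalently, this is the standard fact that the commutant of $O(n-1)$ acting on $\mathbb{R}^n$ with a fixed vector is two-dimensional.

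Next I would pin down the constants by testing against $\Sigma = vv^\top$ and $\Sigma = I$. Since $\|\theta\| = 1$ on the sphere, $\mathrm{Tr}(Q) = \int_\Theta \|\theta\|^2 \pi_{v,\gamma}(d\theta) = 1$, which gives $a + (n-1)b = 1$; and $\langle Qv, v\rangle = \int_\Theta \langle\theta,v\rangle^2 \pi_{v,\gamma}(d\theta) =: c$ gives $a = c$, hence $b = (1-c)/(n-1)$. Substituting into $F_{v,\gamma}(\Sigma) = \mathrm{Tr}(\Sigma Q) = (a-b)\langle\Sigma v,v\rangle + b\,\mathrm{Tr}(\Sigma)$ and recognizing $h(\gamma) = \frac{n}{n-1}\int_\Theta(1 - \langle\theta,v\rangle^2)\pi_{v,\gamma}(d\theta) = \frac{n(1-c)}{n-1}$, so that $b = h(\gamma)/n$ and $a - b = 1 - h(\gamma)$, yields the claimed identity $F_{v,\gamma}(\Sigma) = (1 - h(\gamma))\langle\Sigma v,v\rangle + h(\gamma)\,\mathrm{Tr}(\Sigma)/n$.

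Finally, the two-sided bound on $h(\gamma)$ is elementary: $1 - \langle\theta,v\rangle^2 \geq 0$ on $S^{n-1}$ gives $h(\gamma) \geq 0$, while on $\Theta_{v,\gamma}$ the identity $\|\theta - v\|^2 = 2 - 2\langle\theta,v\rangle \leq \gamma^2$ gives $\langle\theta,v\rangle \geq 1 - \gamma^2/2$, hence $1 - \langle\theta,v\rangle^2 = (1-\langle\theta,v\rangle)(1+\langle\theta,v\rangle) \leq (\gamma^2/2)\cdot 2 = \gamma^2$, so $h(\gamma) \leq n\gamma^2/(n-1)$. The main obstacle is the equivariance/reduction step producing $Q = a\,vv^\top + b(I - vv^\top)$; once that structure is in hand, everything reduces to evaluating two scalar integrals and bounding one of them. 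One should also note in passing that $c$, hence $h(\gamma)$, depends only on $\gamma$ and $n$ (not on $v$), by the $O(n)$-invariance of $\pi$.
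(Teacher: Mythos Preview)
Your proof is correct. The paper does not actually prove this lemma; it simply cites \cite[Section 2.3]{Mourtada2022} (and \cite{wang2023optimal} for a constructive version), so there is no in-paper argument to compare against. Your symmetry/equivariance approach---reducing to the second-moment matrix $Q=\int\theta\theta^\top\,\pi_{v,\gamma}(d\theta)$, showing $RQR^\top=Q$ for all $R\in O(n)$ fixing $v$, and concluding $Q=a\,vv^\top+b(I-vv^\top)$ via Schur/commutant considerations---is precisely the standard route used in the cited references, and your identification of $a,b$ with $h(\gamma)$ and the elementary bound $1-\langle\theta,v\rangle^2\le\gamma^2$ on the cap are both clean and correct.
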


\subsection{Proof for the left-tail probability bound}
The following technical lemma establishes a parameterized bound for the left-tail probability by employing the PAC-Bayesian inequality. We will then select the optimal parameters to obtain the bounds stated in {\rm Lemma \ref{Lem:LeftProb}}.
\begin{lemma}\label{Lem:LeftProbGM}
    Under {\rm Assumption \ref{assum:4thmom}}, the matrix $\Abar_{n,M}$ in \eqref{eq:Ab} with $n\geq 2$ satisfies
    \begin{equation}\label{ineq:LeftProbGM}
        \P\left\{ \lambda_{\min}(\Abar_{n,M})\leq G^M(\gamma,t)\}\right\}\leq \frac{\kappa\lambda_1^2}{M} + \exp\left( -t \right)
    \end{equation}
 for all $\gamma\in(0,1/2]$ and $t>0$, where 
        \begin{equation}\label{eq:GM}
        G^M(\gamma,t):= c_\gamma \left[ -2\gamma^2(\lambda_1 + 1) + \frac{\lambda_1\lambda_n}{\lambda_1+\lambda_n} - \frac{\kappa(\lambda_1+\lambda_n)}{2 M}\left(n\log\left(\frac{5}{4\gamma^2}\right)+t\right)\right]
    \end{equation}
with constant $c_\gamma = 1/(1-h(\gamma))\in[1,2]$ and $h(\gamma)$ in \eqref{eq:h_gamma}. 
\end{lemma}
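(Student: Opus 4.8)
\medskip
\noindent The plan is to derive \eqref{ineq:LeftProbGM} from the PAC-Bayesian inequality (Lemma~\ref{lemma:PAC_Bay}) on the hypersphere $\Theta=S^{n-1}$, applied to the process
\[
Z(\theta)= -\lambda M\,\innerp{\Abar_{n,M}\theta,\theta} + \alpha M = -\lambda \sum_{m=1}^M \norm{R_{\phi_\theta}[u^m]}_\spaceY^2 + \alpha M,\qquad \theta\in S^{n-1},
\]
with $\lambda=\tfrac{2}{\kappa(\lambda_1+\lambda_n)}$, $\phi_\theta=\sum_{k=1}^n\theta_k\psi_k$, and $\alpha$ the constant making the PAC hypothesis hold. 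First I would verify $\E[\exp(Z(\theta))]\le 1$ for every $\theta\in S^{n-1}$: by sample independence this is equivalent to $\E[\exp(-\lambda\norm{R_{\phi_\theta}[u]}_\spaceY^2)]\le e^{-\alpha}$, and Lemma~\ref{lemma:mfg_4thmmt} bounds the left side by $\exp\!\big(-\lambda x_\theta + \tfrac{\kappa\lambda^2}{2}x_\theta^2\big)$, where $x_\theta:=\E[\norm{R_{\phi_\theta}[u]}_\spaceY^2]=\innerp{\LGbar\phi_\theta,\phi_\theta}_{L^2_\rho}=\sum_{k=1}^n\lambda_k\theta_k^2\in[\lambda_n,\lambda_1]$ since $\norm{\theta}=1$. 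Because $x\mapsto \lambda x-\tfrac{\kappa\lambda^2}{2}x^2$ is concave with vertex at the midpoint $\tfrac{\lambda_1+\lambda_n}{2}$ for this $\lambda$, its minimum over $[\lambda_n,\lambda_1]$ is attained at the endpoints; this pins down $\alpha$ and gives $\E[\exp(Z(\theta))]\le 1$ uniformly in $\theta$.

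Second, I would take $\pi$ to be the uniform law on $S^{n-1}$ and let the free measure $\mu$ in \eqref{Eq:PAC_Ineq} range over the spherical-cap laws $\pi_{v,\gamma}$ of Lemma~\ref{Lem:F_Sigma}, indexed by $v\in S^{n-1}$. This yields: with probability at least $1-e^{-t}$, simultaneously for all $v\in S^{n-1}$,
\[
\lambda M\,F_{v,\gamma}(\Abar_{n,M})\ \ge\ \alpha M-\kl{\pi_{v,\gamma},\pi}-t,
\]
where $F_{v,\gamma}(\Abar_{n,M})=\int\innerp{\Abar_{n,M}\theta,\theta}\,\pi_{v,\gamma}(d\theta)=(1-h(\gamma))\innerp{\Abar_{n,M}v,v}+h(\gamma)\tfrac{\tr(\Abar_{n,M})}{n}$ by Lemma~\ref{Lem:F_Sigma}. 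For the approximation term, $\kl{\pi_{v,\gamma},\pi}=\log\tfrac{1}{\pi(\Theta_{v,\gamma})}$ (as $\pi_{v,\gamma}$ is $\pi$ conditioned on the cap), and a standard volume estimate for spherical caps, as used in \cite{Mourtada2022,wang2023optimal}, bounds this by $n\log\tfrac{5}{4\gamma^2}$ uniformly in $v$.

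Third, I would discard the random trace term. By Lemma~\ref{cor:tr}, $\tr(\Abar_{n,M})/n<\lambda_1+1$ on an event of probability at least $1-\kappa\lambda_1^2/M$; on that event $h(\gamma)\tfrac{\tr(\Abar_{n,M})}{n}\le h(\gamma)(\lambda_1+1)\le\tfrac{n}{n-1}\gamma^2(\lambda_1+1)\le 2\gamma^2(\lambda_1+1)$, using $h(\gamma)\le\tfrac{n\gamma^2}{n-1}$ and $n\ge2$. Substituting the uniform bounds, dividing by $\lambda M(1-h(\gamma))>0$, and taking the infimum over $v$ (which replaces $\innerp{\Abar_{n,M}v,v}$ by $\lambda_{\min}(\Abar_{n,M})$ since $\Abar_{n,M}$ is symmetric) gives $\lambda_{\min}(\Abar_{n,M})\ge G^M(\gamma,t)$ on the intersection of the two events, with $c_\gamma=1/(1-h(\gamma))\in[1,2]$ for $\gamma\in(0,1/2]$; a union bound over the two complementary events yields \eqref{ineq:LeftProbGM} (the boundary set $\{\lambda_{\min}=G^M\}$ is absorbed by monotonicity of $G^M$ in $t$).

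I expect the main obstacle to be the random trace term $\tr(\Abar_{n,M})$ that Lemma~\ref{Lem:F_Sigma} inevitably introduces inside $F_{v,\gamma}$: in the coercive setting of \cite{wang2023optimal} an entrywise almost-sure bound on the random matrix controls it at once, whereas here it must be handled using only the fourth-moment condition, which is precisely the role of Lemma~\ref{cor:tr} (a variance/Chebyshev argument on $\tr(\Abar_{n,M})=\tfrac1M\sum_m\tr(A^m)$). A secondary delicate point is establishing $\E[\exp(Z(\theta))]\le 1$ uniformly in $\theta$ even though $\norm{R_{\phi_\theta}[u]}_\spaceY^2$ is unbounded — this is where the quadratic-in-$\lambda$ moment-generating estimate of Lemma~\ref{lemma:mfg_4thmmt} together with the endpoint optimization over the interval $[\lambda_n,\lambda_1]$ containing $x_\theta$ is essential, and it explains the particular choice $\lambda=\tfrac{2}{\kappa(\lambda_1+\lambda_n)}$.
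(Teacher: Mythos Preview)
Your proposal is correct and follows essentially the same approach as the paper: define $Z(\theta)=-\lambda M\innerp{\Abar_{n,M}\theta,\theta}+\alpha M$ with the same choice $\lambda=\tfrac{2}{\kappa(\lambda_1+\lambda_n)}$ (so that the quadratic bound from Lemma~\ref{lemma:mfg_4thmmt} is maximized at the endpoints of $[\lambda_n,\lambda_1]$, giving $\alpha=\lambda\tfrac{\lambda_1\lambda_n}{\lambda_1+\lambda_n}$), apply the PAC-Bayes inequality with $\pi$ uniform on $S^{n-1}$ and $\mu=\pi_{v,\gamma}$, use Lemma~\ref{Lem:F_Sigma} to split $F_{v,\gamma}(\Abar_{n,M})$, bound $\kl{\pi_{v,\gamma},\pi}\le n\log\tfrac{5}{4\gamma^2}$, handle the random trace by Lemma~\ref{cor:tr} with $h(\gamma)\le 2\gamma^2$, and conclude via the infimum over $v$ and a union bound. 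The identification of the two delicate points (the random trace controlled only through the fourth-moment condition, and the uniform exponential integrability via endpoint optimization) matches exactly what the paper highlights as its relaxation over \cite{wang2023optimal}.
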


\begin{proof}[Proof of Lemma \ref{Lem:LeftProbGM}] We split the proof into two steps. 

\textbf{Step 1.} We define the process $Z(\theta)$ in the PAC-Bayesian inequality using $R_{\phi_\theta}[u^m]$, so that the bounds for $Z(\theta)$ can lead to the left-tail bounds of  $\lambda_{\min}(\Abar_{n,M})$. Here, we denote $\phi_\theta = \sum_{l=1}^n \theta_l \psi_l$ with $\theta = (\theta_1,\cdots,\theta_n)^\top\in S^{n-1}$, which gives $\theta^\top \Abar_{n,M} \theta = \frac{1}{M}\sum_{m=1}^M\|R_{\phi_\theta}[u^m]\|_\spaceY^2$.

 Note that $\E[\|R_{\phi_\theta}[u]\|_\spaceY^2]= \theta^\top \Abar_{n,\infty} \theta $. Under the fourth-moment condition in {\rm Assumption \ref{assum:4thmom}}, {\rm Lemma \ref{lemma:mfg_4thmmt}} implies that  
    \begin{align} \label{eq:mgf_R_phi_theta}
        \E\left[\exp\left( -\lambda \|R_{\phi_\theta}[u]\|_\spaceY^2
        \right)\right] 
        &\leq \exp\left(-\lambda \left(\theta^\top \Abar_{n,\infty} \theta\right) + \frac{\lambda^2}{2}\kappa\left(\theta^\top \Abar_{n,\infty} \theta\right)^2\right)
    \end{align}
    for all $\lambda>0$. 
    Note that $\theta^\top \Abar_{n,\infty} \theta$ runs over $[\lambda_n,\lambda_1]$ as $\theta$ running over the sphere. The quadratic function $ g_\lambda(x):= -\lambda x + \frac{\lambda^2\kappa}{2} x^2$ is maximized at either of the endpoints if its center $\frac{1}{\lambda\kappa}$ is 
    \begin{equation}\label{eq:lambda*}
        \frac{1}{\lambda\kappa} = \frac{\lambda_1 + \lambda_n}{2},\text{ i.e. }\lambda = \frac{2}{\kappa(\lambda_1+\lambda_n)}, 
    \end{equation}
   and the maximal value is $ -\frac{2}{\kappa(\lambda_1+\lambda_n)} \lambda_n + \frac{2}{\kappa(\lambda_1+\lambda_n)^2} \lambda_n^2 = -\frac{2\lambda_1\lambda_n}{\kappa(\lambda_1+\lambda_n)^2}= -\lambda \frac{\lambda_1\lambda_n}{\lambda_1+\lambda_n}$.
    Thus, with $\lambda$ in \eqref{eq:lambda*}, Eq.\eqref{eq:mgf_R_phi_theta} implies that 
        \begin{align*}
        \E\left[\exp\left( -\lambda \|R_{\phi_\theta}[u]\|_\spaceY^2
        \right)\right] 
        &\leq \exp\left(g_\lambda(\theta^\top \Abar_{n,\infty} \theta) \right) \leq \exp\left(- \lambda \frac{\lambda_1\lambda_n}{\lambda_1+\lambda_n}\right). 
    \end{align*}
Therefore, with $\lambda$ in \eqref{eq:lambda*}, the process 
    \begin{equation} \label{eq:Ztheta}
        \begin{aligned}
            Z(\theta)&:= -\lambda \sum_{m=1}^M\|R_{\phi_\theta}[u^m]\|_\spaceY^2 + \lambda \frac{\lambda_1\lambda_n}{\lambda_1+\lambda_n} M 
        \end{aligned}
    \end{equation}
    with $\theta\in S^{n-1}$ satisfying 
    \[ \sup_{\theta\in S^{n-1}}\E[e^{Z(\theta)}] \leq 1. 
    \] 
Then, the PAC-Bayesian inequality with $\Theta=S^{n-1}$ implies, 
    \begin{equation*}
        \P\left\{ \forall \mu\in\mathcal{P},\ \int_\Theta Z(\theta)\mu(d\theta)\leq \kl{\mu,\pi} + t \right\}\geq 1 - e^{-t},\quad \forall t > 0,
    \end{equation*}
for every fixed Borel probability measure $\pi$ on $S^{n-1}$, where $\mathcal{P}$ is the set of all Borel probability measures on $S^{n-1}$.

\textbf{Step 2.} We pass the bound for $Z(\theta)$ to the bound for $ \lambda_{\min}(\Abar_{n,M})$.   
Let $\pi$ be the uniform distribution on $S^{n-1}$, and only consider $\mu$ of the form $\pi_{v,\gamma}$ in \eqref{eq:prob_measures}. 
    Then, the above PAC-Bayesian inequality implies that for all $t>0$, there exists a measurable set $E_{t}$ with $\P(E_{t})\geq 1 - e^{-t}$ such that and for all $\omega\in E_{t}$ and all $v\in S^{n-1}$, $\gamma\in(0,1/2]$ (hence all $\pi_{v,\gamma}\in\mathcal{P}$),  
    \begin{equation}\label{eq:PAC-ineq}
        \int_{S^{n-1}} Z(\theta,\omega)\pi_{v,\gamma}(d\theta)\leq \kl{\pi_{v,\gamma},\pi} + t .	
    \end{equation}

The bound for the Kullback-Leibler divergence is straightforward. Since $\pi(\Theta_{v,\gamma})\geq (1+2/\gamma)^{-n}$ (see, e.g., \cite[Supplementary Section 2.4]{Mourtada2022} and \cite[Corollary 4.2.13]{Vershynin2018}), we have $$
        \kl{\pi_{v,\gamma},\pi} = \log(1/\pi(\Theta_{v,\gamma}))\leq n\log(1+2/\gamma) \leq n  \log (5/(4\gamma^2)), 
        $$ 
    where the last inequality holds because $1 + \frac{2}{\gamma}= \frac{\gamma^2+2\gamma}{\gamma^2} \leq \frac 5 {4\gamma^2}$ for $\gamma\in (0,\frac 1 2]$.
    Meanwhile, the definition of $Z(\theta)$ in \eqref{eq:Ztheta} and Lemma \ref{Lem:F_Sigma} imply that 
    \begin{align*}
        \frac{1}{M}\int_{S^{n-1}} Z(\theta)\pi_{v,\gamma}(d\theta)
        &= -\lambda \int_{S^{n-1}} \innerp{\Abar_{n,M} \theta,\theta} \pi_{v,\gamma}(d\theta) + \lambda \frac{\lambda_1\lambda_n}{\lambda_1+\lambda_n}\\
        &= -\lambda [1-h(\gamma)]\innerp{\Abar_{n,M} v,v} -\lambda h(\gamma)\frac{\tr(\Abar_{n,M})}{n} + \lambda \frac{\lambda_1\lambda_n}{\lambda_1+\lambda_n}.
    \end{align*}     
Thus, Eq.\eqref{eq:PAC-ineq} implies that for all $\omega\in E_{t}$, $v\in S^{n-1}$, $\gamma\in(0,1/2]$, 
    \begin{align*}
        &-\lambda [1-h(\gamma)]\innerp{\Abar_{n,M}(\omega) v,v} -\lambda h(\gamma)\frac{\tr(\Abar_{n,M}(\omega))}{n} + \lambda \frac{\lambda_1\lambda_n}{\lambda_1+\lambda_n}
        \leq  
         \frac{n\log (5/(4\gamma^2)) +t}{M}.
    \end{align*}
    Then, using the note notation $c_\lambda=1/(1-h(\gamma)) $, we have 
        \begin{align*}
        \innerp{\Abar_{n,M}(\omega) v,v}&\geq\frac{1}{1-h(\gamma)}\left[ -h(\gamma)\frac{\tr(\Abar_{n,M}(\omega))}{n} +  \frac{\lambda_1\lambda_n}{\lambda_1+\lambda_n} - \frac{n\log (5/(4\gamma^2))  +t}{\lambda M} \right]. 
    \end{align*}
    When $v$ runs over $S^{n-1}$, the left-hand side becomes $\lambda_{\min}(\Abar_{n,M}(\omega)) = \inf_{v\in\Theta}\innerp{\Abar_{n,M}(\omega) v,v}$, while the right-hand side is independent of $v.$ Therefore, for all $\omega\in E_{t}$, 
    \begin{equation}\label{ineq:eigen-lowerbound}
        \begin{aligned}
            \lambda_{\min}(\Abar_{n,M}(\omega)) \geq c_\gamma &\left[ -h(\gamma)\frac{\tr(\Abar_{n,M}(\omega))}{n} + \frac{\lambda_1\lambda_n}{\lambda_1+\lambda_n} -  \frac{n\log (5/(4\gamma^2))  +t}{\lambda M} \right]. 
        \end{aligned}
    \end{equation}
     
Lastly, we bound the random trace term. By Lemma \ref{cor:tr}, there exists an event $E_0$ with probability no less than $1-\kappa\lambda_1^2/M$ such that for all $\omega\in E_0$, one has $\tr(\Abar_{n,M}(\omega))/n < \lambda_1 + 1$. Meanwhile, the function $h(\gamma)$ in \eqref{eq:h_gamma}
satisfies $h(\gamma)\leq 2\gamma^2$. Then, for $\omega\in E_0 \cap E_{t}$,    
    \begin{equation}
        \label{def:GMgammalambdat}
        \begin{aligned}
            \lambda_{\min}(\Abar_{n,M}(\omega))  >  c_\gamma \left[ -2\gamma^2(\lambda_1 + 1) + \frac{\lambda_1\lambda_n}{\lambda_1+\lambda_n}  -  \frac{n\log (5/(4\gamma^2))  +t}{\lambda M}\right],
        \end{aligned}
    \end{equation}
    which equals $G^M(\gamma,t)$ defined in \eqref{eq:GM} by recall that the value of $\lambda$ in \eqref{eq:lambda*}. In other words, 

    \begin{align*}
        &\ \quad \P\left\{\lambda_{\min}(\Abar_{n,M})\leq G^M(\gamma,t)\right\}\leq \P\left\{\left(E_0 \cap E_{t}\right)^c\right\} = \P\left\{E_0^c \cup E_{t}^c\right\}\\
        &\leq \P\left(E_0^c\right) + \P\left(E_{t}^c\right)\leq \frac{\kappa\lambda_1^2}{M} + 1-(1-e^{-t}) = \frac{\kappa\lambda_1^2}{M} + \exp(-t), 
    \end{align*}
    which gives \eqref{Lem:LeftProbGM}. 
\end{proof}

\begin{proof}[Proof of Lemma \ref{Lem:LeftProb}]
We split the proof into two cases: $n\geq 2$ and $n=1$. The proof for the case $n\geq 2$ uses the PAC-Bayesian inequality-based bound in Lemma \ref{Lem:LeftProbGM}. The proof for case $n=1$ follows directly from a bound for the moment generating function of $\|R_\phi[u]\|_\spaceY^2$. 

 \textbf{Case $n\geq 2$.} By Lemma \ref{Lem:LeftProbGM},
    \begin{equation}\label{eq:left_tail_tgamma}
        \P\left\{ \lambda_{\min}(\Abar_{n,M})\leq G^M(\gamma,t)\}\right\}\leq \frac{\kappa\lambda_1^2}{M} + \exp\left( -t \right),
    \end{equation}
    where $G^M(\gamma,t)$ is defined in \eqref{eq:GM}: 
    \begin{equation*}
        G^M(\gamma,t):= c_\gamma \left[ -2\gamma^2(\lambda_1 + 1) + \frac{\lambda_1\lambda_n}{\lambda_1+\lambda_n} - \frac{\kappa(\lambda_1+\lambda_n)}{2 M}\left(n\log\left(\frac{5}{4\gamma^2}\right)+t\right)\right]
    \end{equation*}
with $c_\gamma \in[1,2]$ for all $\gamma\in(0,1/2]$ and $t>0$.

    Now that the bound $G^M(\gamma,t)$ is deterministic, what remains to do is to choose proper constants $\gamma\in(0,1/2]$ and $t>0$ such that $G^M(\gamma,t) \geq (3-\varepsilon) \lambda_n/8$. First, choose $\gamma= \sqrt{\lambda_n/(\lambda_1 + 1)}/4 < 1/4$ so that $2(\lambda_1 + 1)\gamma^2 = \lambda_n/8$.  Note that $ -2\gamma^2(\lambda_1 + 1) + \frac{\lambda_1\lambda_n}{\lambda_1+\lambda_n} \geq -\frac{\lambda_n}{8} + \frac{1}{2}\lambda_n =  \frac{3\lambda_n}{8}$. Then, 
        \begin{align*}
            G^M(\sqrt{\lambda_n/(\lambda_1 + 1)}/4,t) 
            &= c_\gamma \left[ \frac{3\lambda_n}{8} - \frac{\kappa(\lambda_1+\lambda_n)}{2 M}\left(n\log\left(\frac{5}{4\gamma^2}\right)+t\right)\right].
        \end{align*}
Next, set $t$ to satisfy $
            \frac{\kappa(\lambda_1+\lambda_n)}{2 M}\left(n\log\left(\frac{5}{4\gamma^2}\right)+t\right) = \frac{\varepsilon}{8}\lambda_n$ 
        so that (recall that $c_\gamma\in [1,2]$)
        \begin{equation*}
            c_\gamma \left[ \frac{3\lambda_n}{8} - \frac{\kappa(\lambda_1+\lambda_n)}{2 M}\left(n\log\left(\frac{5}{4\gamma^2}\right)+t\right)\right] = c_\gamma \frac{3-\varepsilon}{8}\lambda_n \geq \frac{3-\varepsilon}{8}\lambda_n. 
        \end{equation*}
        Solving for $t$, we get 
        \begin{equation}\label{eq:t}
            t = \frac{\varepsilon M}{4\kappa(\lambda_1+\lambda_n)}\lambda_n - n\log\left(\frac{5}{4\gamma^2}\right) = \frac{\varepsilon M}{4\kappa(\lambda_1+\lambda_n)}\lambda_n - n\log\left(\frac{20(\lambda_1+1)}{\lambda_n}\right).
        \end{equation}

    Therefore, by \eqref{eq:left_tail_tgamma}, we have
    \begin{align*}
        &\ \quad \P\left\{\lambda_{\min}(\Abar_{n,M})\leq  \frac{3-\varepsilon}{8} \lambda_n\right\}        \leq   \frac{\kappa\lambda_1^2}{M} + \exp\left(n\log\left(\frac{20(\lambda_1+1)}{\lambda_n}\right) - \frac{\varepsilon M \lambda_n}{4\kappa(\lambda_1+\lambda_n)}\right).
    \end{align*}

    \textbf{Case $n=1$.} Since $\lambda_{\min}(\Abar_{1,M}) = \Abar_{1,M}$, we have  
    \begin{align*}
         \P\left\{\lambda_{\min}( \Abar_{1,M})\leq \frac{3-\varepsilon}{8} \lambda_1\right\}
        &=\P\left\{\exp\left(-\lambda M\Abar_{1,M}\right)\geq \exp\left(-\frac{3-\varepsilon}{8} \lambda\lambda_1 M\right)\right\}\\
        &\leq \E\left[\exp\left(-\lambda M\Abar_{1,M}\right)\right]\exp\left(\frac{3-\varepsilon}{8} \lambda\lambda_1 M\right)\\
        &= \left(\E\left[\exp\left(-\lambda \|R_{\psi_1}[u^m]\|_\spaceY^2\right)\right]\right)^M\exp\left(\frac{3-\varepsilon}{8} \lambda\lambda_1 M\right)
    \end{align*}
    for all $\lambda>0$. Meanwhile, since $\E[\|R_{\psi_1}[u^m]\|_\spaceY^2]= \lambda_1 $,  Lemma \ref{lemma:mfg_4thmmt} implies that 
    \begin{equation*}
        \E\left[\exp\left(-\lambda \|R_{\psi_1}[u^m]\|_\spaceY^2\right)\right]\leq\exp\left(-\lambda\lambda_1+\frac{\kappa\lambda^2\lambda_1^2}{2}\right). 
    \end{equation*}

    Taking $\lambda = \frac{1}{\kappa\lambda_1}$, we obtain 
    \begin{align*}
        \P\left\{\lambda_{min}(\Abar_{1,M})\leq \frac{3-\varepsilon}{8} \lambda_1\right\}
        &\leq \exp\left(-\lambda\lambda_1 M+\frac{\kappa\lambda^2\lambda_1^2}{2}M+\frac{3-\varepsilon}{8} \lambda\lambda_1 M\right)\\
        &=\exp\left(-\frac{1+\varepsilon}{8\kappa} M\right)\\
        &\leq \frac{\kappa\lambda_1^2}{M} + \exp\left(\log\left(\frac{20(\lambda_1+1)}{\lambda_1}\right) - \frac{ \varepsilon M\lambda_1}{4\kappa(\lambda_1+\lambda_1)}\right).
    \end{align*}
    This completes the proof. 
\end{proof}

\section{Preliminaries and proofs for the lower rate}\label{sec:pre_proof_lower}
\subsection{Preliminaries} \label{sec:preliminiary-lower}

A key element in the lower bound is the probability of test errors for binary hypothesis testing. The following  Neyman-Pearson lemma connects the probability of test errors with the total variation distance, 
\[
d_{\rm tv}(\P_0,\P_1) := \sup_{A\in\calE}|\P_1(A)-\P_0(A)|,
\]
between two distributions $\P_0$ and $\P_1$ on the same measurable space $(E,\calE)$. 
\begin{lemma}[Neyman-Pearson]\label{lemma:NP}
        Let $\P_0$ and $\P_1$ be two probability measures defined on the same measurable space $(E,\calE)$. Then, among all tests $T:(E,\calE)\to\{0,1\}$,
        \begin{equation}\label{eq:NP}
            \inf_{T:(E,\calE)\to\{0,1\}}\left\{\P_0(T=1) + \P_1(T=0)\right\}=1 - d_{\mathrm{tv}}(\P_0,\P_1).
        \end{equation}
\end{lemma}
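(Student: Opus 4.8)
<br>

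The plan is to recast the minimization over tests as a maximization over measurable events, and then to recognize that maximization as exactly the total variation distance.

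First I would identify each test $T:(E,\calE)\to\{0,1\}$ with the event $A_T:=\{T=1\}\in\calE$; the map $T\mapsto A_T$ is a bijection between $\{0,1\}$-valued tests and $\calE$. Under this identification $\P_0(T=1)=\P_0(A_T)$ and $\P_1(T=0)=\P_1(A_T^c)=1-\P_1(A_T)$, so that
\[
\P_0(T=1)+\P_1(T=0)=1-\bigl(\P_1(A_T)-\P_0(A_T)\bigr).
\]
Hence taking the infimum of the left-hand side over all tests $T$ is the same as taking the supremum of $\P_1(A)-\P_0(A)$ over $A\in\calE$, i.e. $\inf_T\{\P_0(T=1)+\P_1(T=0)\}=1-\sup_{A\in\calE}\bigl(\P_1(A)-\P_0(A)\bigr)$.

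The remaining step is to show $\sup_{A\in\calE}\bigl(\P_1(A)-\P_0(A)\bigr)=d_{\mathrm{tv}}(\P_0,\P_1)$. The inequality ``$\leq$'' is immediate from the definition $d_{\mathrm{tv}}(\P_0,\P_1)=\sup_{A\in\calE}|\P_1(A)-\P_0(A)|$. For ``$\geq$'' I would use that complementation flips the sign, $\P_1(A^c)-\P_0(A^c)=-\bigl(\P_1(A)-\P_0(A)\bigr)$, so that for every $A$ one of $A,A^c$ realizes $|\P_1(A)-\P_0(A)|$ as a value of the one-sided functional; thus $\sup_A\bigl(\P_1(A)-\P_0(A)\bigr)=\sup_A|\P_1(A)-\P_0(A)|$. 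Substituting into the previous display yields \eqref{eq:NP}. If one wishes to exhibit that the infimum is attained, fix the dominating measure $\mu=\P_0+\P_1$ with densities $p_i=d\P_i/d\mu$ and take the likelihood-ratio test $T^\star=\mathbf{1}_{\{p_1>p_0\}}$; since $\int(p_1-p_0)\,d\mu=0$ one has $\P_1(\{p_1>p_0\})-\P_0(\{p_1>p_0\})=\int_{\{p_1>p_0\}}(p_1-p_0)\,d\mu=\tfrac12\int|p_1-p_0|\,d\mu=d_{\mathrm{tv}}(\P_0,\P_1)$, so $T^\star$ achieves the infimum.

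There is essentially no obstacle: the statement is elementary once the test–event dictionary is set up. The only point requiring a little care is the bookkeeping of the sign convention — the test-error objective naturally produces the one-sided supremum $\sup_A(\P_1(A)-\P_0(A))$, whereas $d_{\mathrm{tv}}$ is defined with an absolute value — and this is reconciled by the complementation symmetry $A\leftrightarrow A^c$.
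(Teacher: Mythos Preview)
Your proof is correct and follows essentially the same route as the paper's own proof: both identify tests with events, rewrite the objective as $1-(\P_1(A)-\P_0(A))$, and then pass from the one-sided supremum to the total variation via complementation. You add a little more than the paper---an explicit justification of the complementation step and the attainment of the infimum via the likelihood-ratio test---but the core argument is the same.
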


\begin{proof}[Proof of Lemma \ref{lemma:NP}]
    Note that for a test $T$, 
    \begin{align*}
        \P_0(T=1) + \P_1(T=0) &= \P_0(T=1) + 1 - \P_1(T=1) \\
        &= 1 - (\P_1(T=1) - \P_0(T=1)). 
    \end{align*}
Also, note that $T$ runs over all tests is equivalent to the set $A:=\{T=1\}$ runs over all the measurable sets. Thus, we have
    \begin{equation*}
        \begin{aligned}
            \inf_{T:(E,\calE)\to\{0,1\}}\left\{\P_0(T=1) + \P_1(T=0)\right\}&=1 - \sup_{T:(E,\calE)\to\{0,1\}}\left\{\P_1(T=1) - \P_0(T=1)\right\}\\
            &=1 - \sup_{A\in\calE}\left\{\P_1(A) - \P_0(A)\right\}\\
            &=1 - d_{\mathrm{tv}}(\P_0,\P_1)
        \end{aligned}
    \end{equation*}
since  $\sup_{A\in\calE}\left\{\P_1(A) - \P_0(A)\right\} = \sup_{A\in\calE}\left|\P_1(A) - \P_0(A)\right|$.
\end{proof}

To bound the total variation distance, we resort to Pinsker's inequality (see, e.g., \cite[Lemma 2.5]{tsybakov2008introduction}). It applies to probabilities on general measurable spaces, including finite- and infinite-dimensional spaces. 
   \begin{lemma}[Pinsker's inequality]\label{lemma:pinsker}
         Let $\P_0$ and $\P_1$ be two probability measures defined on the same measurable space $(E,\calE)$, then
        \begin{equation*}
       d_{\mathrm{tv}}\left(\P_{0},\P_{1}\right) \leq \sqrt{\frac{1}{2}\kl{\P_{0},\P_{1}}},
       \end{equation*}
    where the Kullback--Leibler divergence is    $    \kl{\P_0,\P_1} = \begin{cases}
            \E_0\left[\log\left(\frac{d\P_0}{d\P_1}\right)\right] &\text{if }\P_0\ll \P_1;\\
            +\infty&\text{otherwise.}
        \end{cases}
    $ 
    \end{lemma}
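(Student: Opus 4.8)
The plan is to prove Pinsker's inequality in its sharp form, $d_{\mathrm{tv}}(\P_0,\P_1)^2 \le \tfrac12 \kl{\P_0,\P_1}$, by reducing to the two–point case and then invoking an elementary scalar estimate. If $\P_0$ is not absolutely continuous with respect to $\P_1$ then $\kl{\P_0,\P_1}=+\infty$ and there is nothing to prove, so I would assume $\P_0\ll\P_1$, set $p:=d\P_0/d\P_1$, and record that $\int p\,d\P_1=1$ and $\kl{\P_0,\P_1}=\int p\log p\,d\P_1$.

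First I would identify the set attaining the total variation. For any $A\in\calE$ one has $\P_0(A)-\P_1(A)=\int_A(p-1)\,d\P_1$, which is maximized over $A$ by $A^\star:=\{p\ge 1\}$; since $\int(p-1)\,d\P_1=0$, the supremum of $\P_1(A)-\P_0(A)$ is attained at $(A^\star)^c$ and equals the same value, whence $d_{\mathrm{tv}}(\P_0,\P_1)=\P_0(A^\star)-\P_1(A^\star)=:a-b$ with $a:=\P_0(A^\star)$, $b:=\P_1(A^\star)$, and $0\le b\le a\le 1$. Next I would carry out the data-processing step by convexity: since $x\mapsto x\log x$ is convex on $[0,\infty)$, Jensen's inequality applied to the normalized restriction $\P_1(\,\cdot\mid A^\star)$ gives $\int_{A^\star}p\log p\,d\P_1\ge \P_1(A^\star)\,\varphi\!\big(\tfrac{\P_0(A^\star)}{\P_1(A^\star)}\big)=a\log\tfrac ab$ with $\varphi(x)=x\log x$, and the analogous bound holds on $(A^\star)^c$. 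Adding the two yields
\[
  \kl{\P_0,\P_1}\;\ge\; a\log\frac ab+(1-a)\log\frac{1-a}{1-b}\;=:\;\phi(a,b).
\]

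Finally I would prove the scalar inequality $\phi(a,b)\ge 2(a-b)^2$ for $0\le b\le a\le 1$: fixing $a$ and letting $g(b):=\phi(a,b)-2(a-b)^2$, a direct computation gives $g(a)=0$ and $g'(b)=(a-b)\big(4-\tfrac{1}{b(1-b)}\big)\le 0$ for $0<b\le a$, because $b(1-b)\le\tfrac14$; hence $g(b)\ge g(a)=0$. Combining this with the previous display gives $\kl{\P_0,\P_1}\ge 2(a-b)^2=2\,d_{\mathrm{tv}}(\P_0,\P_1)^2$, which is exactly the asserted bound.

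The computation is routine throughout; the only points needing care are the measure-theoretic bookkeeping in the Jensen step — in particular the degenerate cases $b\in\{0,1\}$ and the conventions $0\log 0=0$, $0/0=0$ — together with the boundary cases $b=a$ and $b\to 0^+$ in the scalar lemma, where one uses continuity and the a.s.\ identity $\int p\,d\P_1=1$ (which forces $a=0$ when $b=0$). Alternatively, the statement can be obtained without the two-point reduction from the pointwise inequality $3(x-1)^2\le(2x+4)(x\log x-x+1)$ for $x\ge 0$ combined with Cauchy–Schwarz applied to $|p-1|=\tfrac{|p-1|}{\sqrt{2p+4}}\cdot\sqrt{2p+4}$, using $\int(2p+4)\,d\P_1=6$ and $d_{\mathrm{tv}}(\P_0,\P_1)=\tfrac12\int|p-1|\,d\P_1$.
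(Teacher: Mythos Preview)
Your proof is correct and follows the standard two-point reduction argument for Pinsker's inequality; the data-processing step via Jensen, the identification of the extremal set $A^\star=\{p\ge 1\}$, and the scalar estimate $g'(b)=(a-b)\big(4-\tfrac{1}{b(1-b)}\big)\le 0$ are all sound, and you have flagged the relevant boundary cases. The paper, however, does not prove this lemma at all: it simply states Pinsker's inequality and defers to a standard reference (Tsybakov, Lemma~2.5), so there is no proof in the paper to compare against---you have supplied a complete argument where the paper invokes the literature.
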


 However, the KL divergence requires the Radon-Nikodym derivative $\frac{d\P_\phi}{d\P_\psi}$ between $\P_\phi$ and $\P_\psi$, which can be measures on infinite-dimensional function spaces. But {\rm Assumption \ref{assump:noise-X-general}} on the noise only provides a bound for the KL divergence between two finite-dimensional shifted measures, and it does not even ensure the existence of $\frac{d\P_\phi}{d\P_\psi}$. The next lemma shows that the total variation between $\P_\phi$ and $\P_\psi$ is the limit of their restricted measures on a filtration, whose KL divergence can be controlled, avoiding the computation of $\frac{d\P_\phi}{d\P_\psi}$.    
    \begin{lemma}\label{lemma:tv}
  Let $\P_0$ and $\P_1$ be two probability measures defined on the same measurable space $(E,\calE)$. Consider a filtration $\calF_1\subset\calF_2\subset\cdots\subset\calF_N\cdots$ such that $\calE = \sigma\left(\bigcup_{N=1}^\infty\calF_N\right)$. Let $\{\P_{i}\big|_{\calF_N}\}$ be restricted measures on the filtration, i.e., $\left.\P_{i}\right|_{\calF_N}(A) = \P_{i}(A),\text{ for all }A\in\calF_N$ and $i=0,1$.
       Then, 
        \begin{equation}\label{eq:lim_tv}
            d_{\mathrm{tv}}\left(\P_{0},\P_{1}\right) = \lim_{N\to\infty}d_{\mathrm{tv}}\left(\left.\P_{0}\right|_{\calF_N},\left.\P_{1}\right|_{\calF_N}\right).
        \end{equation}
    \end{lemma}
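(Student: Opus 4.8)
The plan is to establish \eqref{eq:lim_tv} by proving the two inequalities separately, after first observing that the right-hand limit exists. Since the filtration is increasing, each restricted pair satisfies $d_{\mathrm{tv}}\big(\P_0|_{\calF_N},\P_1|_{\calF_N}\big)=\sup_{A\in\calF_N}|\P_0(A)-\P_1(A)|$, and these suprema are taken over the nested classes $\calF_1\subseteq\calF_2\subseteq\cdots\subseteq\calE$. Hence the sequence is nondecreasing and bounded above by $\sup_{A\in\calE}|\P_0(A)-\P_1(A)|=d_{\mathrm{tv}}(\P_0,\P_1)\le1$; the limit therefore exists and is at most $d_{\mathrm{tv}}(\P_0,\P_1)$. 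This already gives the ``$\ge$'' half of \eqref{eq:lim_tv}.

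The content is the reverse bound $d_{\mathrm{tv}}(\P_0,\P_1)\le\lim_{N\to\infty}d_{\mathrm{tv}}\big(\P_0|_{\calF_N},\P_1|_{\calF_N}\big)$. The key fact is that $\mathcal{A}:=\bigcup_{N\ge1}\calF_N$ is an \emph{algebra} with $\sigma(\mathcal{A})=\calE$: it contains $E$, it is closed under complements, and any finitely many of its members lie in a common $\calF_N$ by nestedness, so it is closed under finite unions. I would then prove the symmetric-difference approximation property: for every $A\in\calE$ and every $\eps>0$ there exist $N$ and $B\in\calF_N$ with $\nu(A\triangle B)<\eps$, where $\nu:=\P_0+\P_1$ is a finite measure. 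This is where the monotone class theorem enters. Let $\mathcal{C}$ be the collection of $A\in\calE$ admitting such approximations; then $\mathcal{A}\subseteq\mathcal{C}$ (take $B=A$), $\mathcal{C}$ is closed under complements (since $A\triangle B=A^c\triangle B^c$), and $\mathcal{C}$ is a monotone class: if $A_n\uparrow A$ with $A_n\in\mathcal{C}$, continuity of the finite measure $\nu$ lets us pick $n$ with $\nu(A\setminus A_n)<\eps/2$ and then $B\in\mathcal{A}$ with $\nu(A_n\triangle B)<\eps/2$, so $\nu(A\triangle B)<\eps$; decreasing limits are handled symmetrically. The monotone class theorem then gives $\mathcal{C}\supseteq\sigma(\mathcal{A})=\calE$.

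With the approximation property in hand, fix $A\in\calE$ and $\eps>0$, choose $N$ and $B\in\calF_N$ with $\nu(A\triangle B)<\eps$, and use $\P_i(A\triangle B)\le\nu(A\triangle B)<\eps$ for $i=0,1$ to obtain
\[
|\P_0(A)-\P_1(A)|\;\le\;|\P_0(B)-\P_1(B)|+2\eps\;\le\;d_{\mathrm{tv}}\big(\P_0|_{\calF_N},\P_1|_{\calF_N}\big)+2\eps\;\le\;\lim_{N\to\infty}d_{\mathrm{tv}}\big(\P_0|_{\calF_N},\P_1|_{\calF_N}\big)+2\eps.
\]
Taking the supremum over $A\in\calE$ and then letting $\eps\downarrow0$ yields the reverse bound, and combined with the first step this proves \eqref{eq:lim_tv}.

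The only non-routine step is the symmetric-difference approximation of $\calE$-sets by members of the algebra $\mathcal{A}$; the delicate points there are checking that $\mathcal{A}$ is genuinely an algebra (so that finite unions of approximants stay inside $\mathcal{A}$) and that $\mathcal{C}$ is a monotone class, which is exactly where the finiteness of $\nu=\P_0+\P_1$ and its continuity along monotone sequences are used. An equivalent alternative would be to bypass the monotone class argument entirely by noting that $f_N:=d(\P_0|_{\calF_N})/d(\nu|_{\calF_N})$ is a bounded $(\calF_N)$-martingale under $\nu$, invoking $L^1$-martingale convergence $f_N\to f_\infty=d\P_0/d\nu$, and passing to the limit in the density identity $d_{\mathrm{tv}}(\P_0|_{\calF_N},\P_1|_{\calF_N})=\tfrac12\int|2f_N-1|\,d\nu$; I prefer the monotone class route since it is self-contained and matches the tools already invoked in the paper.
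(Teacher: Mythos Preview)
Your proof is correct. Both you and the paper use the monotone class theorem as the engine, but you apply it to a different class. The paper defines
\[
\mathcal{C}=\{A\in\calE:\ |\P_0(A)-\P_1(A)|\le D\},\qquad D:=\lim_{N\to\infty}d_{\mathrm{tv}}\big(\P_0|_{\calF_N},\P_1|_{\calF_N}\big),
\]
checks directly that $\mathcal{C}$ is a monotone class containing the algebra $\bigcup_N\calF_N$ (by continuity of $\P_0,\P_1$ along monotone sequences), and concludes $\calE\subseteq\mathcal{C}$, i.e., $d_{\mathrm{tv}}(\P_0,\P_1)\le D$. You instead prove the symmetric-difference approximation property with respect to $\nu=\P_0+\P_1$ via the monotone class theorem, and then transfer it to the total variation bound through a triangle-inequality argument. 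The paper's route is one step shorter and avoids the auxiliary measure $\nu$; your route has the advantage that the approximation lemma you establish is a reusable standalone fact. The martingale alternative you mention at the end would also work and is a genuinely different argument from both.
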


\begin{proof}[Proof of Lemma \ref{lemma:tv}]
        Note that $\{d_{\mathrm{tv}}\left(\left.\P_{0}\right|_{\calF_N},\left.\P_{1}\right|_{\calF_N}\right)\}$ is non-decreasing and bounded, i.e., 
        \begin{equation*}
            \begin{aligned}
                d_{\mathrm{tv}}\left(\left.\P_{0}\right|_{\calF_N},\left.\P_{1}\right|_{\calF_N}\right) &= \sup_{A\in\calF_N}|\P_0(A)-\P_1(A)|\\
                &\leq \sup_{A\in\calF_{N+1}}|\P_0(A)-\P_1(A)| = d_{\mathrm{tv}}\left(\left.\P_{0}\right|_{\calF_{N+1}},\left.\P_{1}\right|_{\calF_{N+1}}\right)\\
                &\leq\sup_{A\in\calE}|\P_0(A)-\P_1(A)|= d_{\mathrm{tv}}\left(\P_{0},\P_{1}\right). 
            \end{aligned}
        \end{equation*}
        Therefore, the limit exists and 
        \begin{equation*}
            D:=\lim_{N\to\infty}d_{\mathrm{tv}}\left(\left.\P_{0}\right|_{\calF_N},\left.\P_{1}\right|_{\calF_N}\right)\leq d_{\mathrm{tv}}\left(\P_{0},\P_{1}\right).
        \end{equation*}
        The other half of the proof uses the monotone class theorem (see, e.g., \cite[Theorem 3.4]{Billingsley2012}). Consider the class
        \begin{equation*}
            \mathcal{C} := \{A\in\calE:|\P_0(A) - \P_1(A)|\leq D\}.
        \end{equation*}
        First, $\calF_N\subset\mathcal{C}$ for all $N\geq 1$ since $d_{\mathrm{tv}}\left(\left.\P_{0}\right|_{\calF_N},\left.\P_{1}\right|_{\calF_N}\right)\leq D$.
        Therefore, we have $\bigcup_{N=1}^\infty\calF_N\subset\mathcal{C}$. Next, we can check that 
        \begin{itemize}
            \item $\bigcup_{N=1}^\infty\calF_N$ is an algebra. This is because (i) $\emptyset\in\calF_1$;
(ii) if $A\in \bigcup_{N=1}^\infty\calF_N$, $A\in\calF_N$ for some $N\geq 1$, then $A^c\in\calF_N$;
(iii) if $A,B\in \bigcup_{N=1}^\infty\calF_N$, $A\in\calF_{N_1}$ and $B\in\calF_{N_2}$ for some $N_1,N_2\geq 1$, then $A,B\in\calF_N$ with $N = \max\{N_1,N_2\}$, and so does $A\cup B$.
  
            \item $\mathcal{C}$ is a monotone class. If $A_1\subset A_2\subset\cdots$ is a monotone non-decreasing sequence in $\mathcal{C}$, we have
                \begin{equation*}
                    \begin{aligned}
                        \left|\P_0\left(\bigcup_{n=1}^\infty A_n\right) - \P_1\left(\bigcup_{n=1}^\infty A_n\right)\right| & = \left|\lim_{n\to\infty}\P_0\left( A_n\right) - \lim_{n\to\infty}\P_1\left( A_n\right)\right|\\
                        &= \lim_{n\to\infty}\left|\P_0\left( A_n\right) - \P_1\left( A_n\right)\right|\leq D.
                    \end{aligned}
                \end{equation*}
Meanwhile, if $A_1\supset A_2\supset\cdots$ is a monotone non-increasing sequence in $\mathcal{C}$, we have
                \begin{equation*}
                    \begin{aligned}
                      \left|\P_0\left(\bigcap_{n=1}^\infty A_n\right) - \P_1\left(\bigcap_{n=1}^\infty A_n\right)\right|   &= \left|\lim_{n\to\infty}\P_0\left( A_n\right) - \lim_{n\to\infty}\P_1\left( A_n\right)\right|\\
                        &= \lim_{n\to\infty}\left|\P_0\left( A_n\right) - \P_1\left( A_n\right)\right|\leq D.
                    \end{aligned}
                \end{equation*}
        \end{itemize}
        Thus, by the monotone class theorem in \cite{Billingsley2012}, $\calE = \sigma\left(\bigcup_{N=1}^\infty\calF_N\right) \subset \mathcal{C}$.
Hence, $d_{\mathrm{tv}}\left(\P_{0},\P_{1}\right) \leq D$, which completes the proof.
    \end{proof}

\subsection{Proof of Lemma \ref{lem:tvbd} (the total variation bound)}\label{sec_append:tv_bd}

  To start, we explicitly define $\P_\phi$, the probability measure of the samples $\{(u^{m},f^m)\}_{m=1}^M$ with $f^m= R_\phi[u^m]+ \varepsilon^m$. We first introduce the filtrations using the following random variables. Let $\{y_i\}_{i\geq 1}$ be an orthonormal basis of $\spaceY$, and denote 
  \begin{equation}\label{eq:Zepsilon}
Z_{\phi,u^m,N} = (\innerp{R_\phi[u^m],y_i}_\spaceY)_{i=1}^N, \ 
Z_{f^m,N}=(\innerp{f^m,y_i})_{i=1}^N, \
Z_{\varepsilon^m,N}=(\innerp{\varepsilon^m,y_i})_{i=1}^N,  
\end{equation}
which are $\R^N$-valued random variables induced by the samples. Note that $Z_{f^m,N} = Z_{\phi,u^m,N}+ Z_{\varepsilon^m,N}$ for each $m$. Let   
  \begin{equation}\label{def:filtration_data}
   \calF_\infty:= \sigma\bigg(\bigcup_{N\geq 1} \calF_N\bigg), \quad   
   \calF_N     := \sigma\left(\left\{u^m, Z_{\varepsilon^m,N}    \right\}_{m=1}^M 
   \right) 
     \,, \forall N\geq 1. 
      \end{equation}
Note that a $\calF_N$-measurable set $A_N$ is in the form 
\begin{equation}\label{eq:AN_FN}
A_N = \{\omega\in \Omega: \{u^m(\omega), Z_{\varepsilon^m,N}(\omega)\}_{m=1}^M\in B_N\}
\end{equation} for some $B_N \in (\mathcal{B}(\spaceX)\otimes \mathcal{B}(\R^N))^{\otimes M}$. Also,  the set  
\[
A_N^\phi = \big\{\omega\in \Omega: \{u^m(\omega),Z_{\varepsilon^m,N}(\omega)+Z_{\phi,u^m,N} (\omega)\}_{m=1}^M \in B_N \big\} 
\]
is in $\calF_N$ since $R_\phi:\spaceX\to \spaceY$ is measurable. At last, if $\spaceY$ is infinite-dimensional, $\calF_\infty=\sigma\left(\bigcup_{N\geq 1}\calF_N\right)=\lambda\left(\bigcup_{N\geq 1}\calF_N\right)$. Here, $\bigcup_{N\geq 1}\calF_N$ is a $\pi$-system, and $\lambda\left(\bigcup_{N\geq 1}\calF_N\right)$ is the $\lambda$-system generated by $\bigcup_{N\geq 1}\calF_N$. It is equal to $\calF_\infty$ due to the $\pi$-$\lambda$ theorem. Moreover, a probability distribution on $(\Omega,\calF_\infty)$ is determined by its behavior on $\bigcup_{N\geq 1}\calF_N$. With these notations, the explicit description of $\P_\phi$ is as follows. 
\begin{itemize}
\item When $\spaceY$ is finite-dimensional (i.e., $\spaceY = \mathrm{span}\{y_1,\ldots,y_N\}$), $\P_\phi$ is a measure on $\calF_N$:
  \[\P_\phi(A_N): = \P (A_N^\phi ), \quad \forall A_N\in \calF_N.  \]	
 \item When $\spaceY$ is infinite-dimensional, $\P_\phi$ is a measure on $\calF_\infty$, determined by
    \[P_\phi(A_N): = \P (A_N^\phi ),\quad \forall A_N \in \calF_N,\, \forall N\geq 1 .\]
    In particular, we define the restricted measures of $\P_\phi$ on $\calF_N$ as  
   \begin{equation}\label{eq:P_FN}
      \P_{\phi,N}:=\P_{\phi}\big|_{\calF_N}, \, \text{ i.e., }\ P_{\phi}\big| _{\calF_N}(A_N) = \P_{\phi}(A_N),\text{ for all } A_N\in\calF_N. 
    \end{equation}
\end{itemize}

\begin{proof}[Proof of Lemma \ref{lem:tvbd}]
    The probability measures $\P_\phi$ and $\P_\psi$ are induced by samples $\{(u^m,f^m )\}_{m=1}^M$ when $f^m = R_\phi[u^m]+\varepsilon^m$ and $f^m =R_\psi[u^m]+\varepsilon^m$, respectively. In particular, note that $\P_0$ is the measure induced by $\{(u^m,\varepsilon^m)\}_{m=1}^M$ since $f^m = R_0[u^m] + \varepsilon^m= \varepsilon^m$.

Our goal is to prove that the next inequality for $\P_{\phi, N}$ and $\P_{\psi, N}$ defined in \eqref{eq:P_FN}:  
    \begin{equation}\label{eq:KLbd}
 	\kl{\P_{\phi,N},\P_{\psi,N}} \leq \frac{\tau M}{2} \E\left[ \left\|R_{\psi-\phi}[u]\right\|_\spaceY^2 \right],
 \end{equation}
 and prove the bound for the total variation distance: 
       \begin{equation}\label{ineq:tvbd}
           d_{\mathrm{tv}}\left(\P_{\phi},\P_{\psi}\right) \leq \frac{1}{2} \sqrt{\tau M\, \E\left[ \left\|R_{\psi-\phi}[u]\right\|_\spaceY^2 \right]}. 
       \end{equation}

    Recall that the $\R^N$-valued random vectors $Z_{\phi,u^m,N} $, $Z_{\psi,u^m,N} $, $Z_{f^m,N}$ and $Z_{\varepsilon^m,N}$ in \eqref{eq:Zepsilon} are induced by these samples. In particular, recall that $p_N$ is the probability density of $Z_{\varepsilon^m,N}$. 

 We show first the following change of measure:  
    \begin{equation}\label{eq:RNder_exp}
        \frac{d\P_{\phi,N}}{d\P_{0,N}} = \prod_{m=1}^M \frac{p_N\left(Z_{f^m,N}- Z_{\phi,u^m,N}   \right)}{p_N\left(Z_{f^m,N} \right)}. 
    \end{equation}
Note that under $\P_0$,  $Z_{f^m,N}$ has the same distribution as $ Z_{\varepsilon^m,N}$ and it is independent of $u^m$. By the independence of the samples, it suffices to consider \eqref{eq:RNder_exp} with $M=1$, which is reduced to verifying that for all $A_N\in \calF_N$, 
  \begin{align*}
     \E_0\left[\frac{d\P_{\phi,N}}{d\P_{0,N}} \mathbf{1}_{A_N} \right]
  & =  \E_0 \left[\frac{p_N\left(Z_{f^m,N}- Z_{\phi,u^m,N}   \right)}{p_N\left(Z_{f^m,N}\right)} \mathbf{1}_{A_N}\right] \\
  &= \E \left[\frac{p_N\left(Z_{\varepsilon^m,N}- Z_{\phi,u^m,N}   \right)}{p_N\left(Z_{\varepsilon^m,N}\right)} \mathbf{1}_{A_N}\right]
  = \P_{\phi,N}(A_N).  
  \end{align*}

As in \eqref{eq:AN_FN}, there exists $B_N\in\mathcal{B}(\spaceX)\otimes \mathcal{B}(\R^N)$ such that $A_N = \{\omega\in \Omega: (u^m(\omega), Z_{\varepsilon^m,N}(\omega))\in B_N \}
$.  Then, the independence between $u^m$ and $\varepsilon^m$ gives rise to
  \begin{align*}
  & \E \left[ \frac{p_N\left(Z_{\varepsilon^m,N}- Z_{\phi,u^m,N}   \right)}{p_N\left(Z_{\varepsilon^m,N} \right)}  \mathbf{1}_{B_N}(u^m,Z_{\varepsilon^m,N} ) \mid u^m \right]  \\
  = &\int_{\R^N} \frac{p_N\left(z  - Z_{\phi,u^m,N}   \right)}{p_N\left(z \right)}  \mathbf{1}_{B_N}(u^m,z ) p_N(z)\,dz = \int_{\R^N} p_N\left(z  - Z_{\phi,u^m,N}   \right)  \mathbf{1}_{B_N}(u^m,z ) \,dz\\
  = & \int_{\R^N} p_N\left(z'    \right)   \mathbf{1}_{B_N}(u^m,z'+ Z_{\phi,u^m,N})\, dz' = \E[\mathbf{1}_{B_N}( u^m,Z_{\varepsilon^m,N} + Z_{\phi,u^m,N} ) \mid u^m ].   
  \end{align*}
Then, we obtain \eqref{eq:RNder_exp} from
\begin{align*}
  & \E \left[\frac{p_N\left(Z_{\varepsilon^m,N}- Z_{\phi,u^m,N}   \right)}{p_N\left(Z_{\varepsilon^m,N} \right)} \mathbf{1}_{A_N} \right ] \\
 =  & \E \left[ \E \left[ \frac{p_N\left(Z_{\varepsilon^m,N}- Z_{\phi,u^m,N}   \right)}{p_N\left(Z_{\varepsilon^m,N} \right)}  \mathbf{1}_{B_N}(u^m,Z_{\varepsilon^m,N} +Z_{\psi,u^m,N} ) \mid u^m \right]  \right ] \\
 = & \E \Big[\E[\mathbf{1}_{B_N}(u^m,Z_{\varepsilon^m,N}+Z_{\phi,u^m,N} ) \mid u^m ]    \Big ] =\P(A_N^\phi)=  \P_{\phi,N}(A_N). 
 \end{align*}

To prove \eqref{eq:KLbd}, applying \eqref{eq:RNder_exp} with  $
\frac{d\P_{\phi,N}}{d\P_{\psi,N}} = \frac{d\P_{\phi,N}}{d\P_{0,N}}\cdot\left(\frac{d\P_{\psi,N}}{d\P_{0,N}}\right)^{-1}$, 
we obtain 
   \begin{equation}\label{eq:Pphi/Ppsi}
         \frac{d\P_{\phi,N}}{d\P_{\psi,N}} = \prod_{m=1}^M \frac{p_N\left(Z_{f^m,N}- Z_{\phi,u^m,N}   \right)}{p_N\left(Z_{f^m,N}- Z_{\psi,u^m,N}\right)}. 
    \end{equation}
Note that under $\P_\phi$, $Z_{f^m,N}$ has the same distribution as $Z_{\phi,u^m,N} + Z_{\varepsilon^m,N}$ under $\P_0$. Then, using $Z_{\phi,u^m,N} - Z_{\psi,u^m,N} = Z_{\phi-\psi,u^m,N}$ and the independence between $u^m$ and $\varepsilon^m$, we obtain 
\begin{align*}
   \E_{\phi}\left[\log \left(\frac{p_N\left(Z_{f^m,N}- Z_{\phi,u^m,N}   \right)}{p_N\left(Z_{f^m,N}- Z_{\psi,u^m,N}\right)}\right)\right] 
& = \E_{0}\left[\log \left(\frac{p_N\left(Z_{\varepsilon^m,N} \right)}{p_N\left(Z_{\varepsilon^m,N}+ Z_{\phi-\psi,u^m,N} \right)}\right)\right]\\ 
 & =   \E\bigg[\int_{\R^N}\log\left(\frac{p_N\left(z \right)}{p_N\left(z - Z_{\psi-\phi,u^m,N}\right)}\right) p_N(z)\,dz \bigg]. 
\end{align*}
Then, {\rm Assumption \ref{assump:noise-X-general} \ref{assum:noise_low}} and $\| Z_{\psi-\phi,u^m,N}\|_{\R^N}^2= \sum_{l=1}^N \innerp{R_{\psi-\phi}[u^m],y_l}_{\spaceY}^2$ imply 
    \begin{align*}
        &\kl{\P_{\phi,N},\P_{\psi,N}} = \E_{\phi}\left[\log\left(\frac{d\P_{\phi,N}}{d\P_{\psi,N}}\right)\right]=\sum_{m=1}^M \E_{\phi}\left[\log \left(\frac{p_N\left(Z_{f^m,N}- Z_{\phi,u^m,N}   \right)}{p_N\left(Z_{f^m,N}- Z_{\psi,u^m,N}\right)}\right)
        \right]\\
        =\, &\sum_{m=1}^M \E\bigg[\int_{\R^N}\log\left(\frac{p_N\left(z' \right)}{p_N\left(z' - Z_{\psi-\phi,u^m,N}\right)}\right) p_N(z')\,dz' \bigg]\\
        \leq&\sum_{m=1}^M \E\left[\frac{\tau}{2} \sum_{l=1}^N \innerp{R_{\psi-\phi}[u^m],y_l}_{\spaceY}^2 \right] \leq \frac{\tau}{2} M\, \E\left[ \left\|R_{\psi-\phi}[u]\right\|_\spaceY^2 \right]. 
    \end{align*}

   Additionally, when $\spaceY$ is finite-dimensional, Eq.\eqref{ineq:tvbd} follows directly from the Pinsker's inequality (i.e., $ d_{\mathrm{tv}}\left(\P_{0},\P_{1}\right) \leq \sqrt{\frac{1}{2}\kl{\P_{0},\P_{1}}}$ when $\P_0$ is absolutely continuous with respect to $\P_1$), 
       \begin{equation*}
        d_{\mathrm{tv}}\left(\P_{\phi},\P_{\psi}\right) = d_{\mathrm{tv}}\left(\P_{\phi,N},\P_{\psi,N}\right)\leq \sqrt{\frac{1}{2}\kl{\P_{\phi,N},\P_{\psi,N}}}\leq \frac{1}{2} \sqrt{\tau M\, \E\left[ \left\|R_{\psi-\phi}[u]\right\|_\spaceY^2 \right]}. 
    \end{equation*}
   When $\spaceY$ is infinite-dimensional, Eq.\eqref{ineq:tvbd} follows from 
    \begin{align*}
        &\ d_{\mathrm{tv}}\left(\P_{\phi},\P_{\psi}\right) = \lim_{N\to\infty}d_{\mathrm{tv}}\left(\P_{\phi,N},\P_{\psi,N}\right)\\
        \leq &\ \limsup_{N\to\infty}\sqrt{\frac{1}{2}\kl{\P_{\phi,N},\P_{\psi,N}}}
        \leq \frac{1}{2} \sqrt{\tau M\, \E\left[ \left\|R_{\psi-\phi}[u]\right\|_\spaceY^2 \right]}, 
    \end{align*}
  where the first equality follows from Lemma \ref{lemma:tv}.
\end{proof}

\begin{remark}[Loss function and likelihood]
\label{rmk:CM-space}
When the noise $\varepsilon$ is an isonormal Gaussian process, the loss function leading to the least squares estimator is a scaled log-likelihood of the data, i.e., $\calE_M(\phi)= -\frac{2}{M}\log \frac{d\P_{\phi}}{d\P_{0}}$.  When $\spaceY$ is finite-dimensional with dimension $N$, this follows directly from \eqref{eq:RNder_exp} with $\P_{\phi,N}= \P_\phi$ and $p_N(x)= \frac{1}{\sqrt{2\pi}}\exp(-\|x\|_{\R^N}^2/2)$:  
\begin{align*}
   -\frac{2}{M}\log\frac{d\P_{\phi,N}}{d\P_{0,N}} 
   & =  \frac{1}{M} \sum_{m=1}^M \left( \|Z_{f^m,N}- Z_{\phi,u^m,N}\|_{\R^N}^2 - \|Z_{f^m,N}\|_{\R^N}^2 \right) 
\\ & =  \frac{1}{M} \sum_{m=1}^M \left( \|Z_{\phi,u^m,N}\|_{\R^N}^2 - 2 \innerp{Z_{f^m,N},Z_{\phi,u^m,N}}_{\R^N} \right)  
\\ & =  \frac{1}{M} \sum_{m=1}^M \left(\|R_\phi[u^m]\|_\spaceY^2 - 2\innerp{f^m,R_\phi[u^m]} \right) = \calE_M(\phi). 
\end{align*}
 When $\spaceY$ is infinite-dimensional, we obtain $\calE_M(\phi)= -\frac{2}{M}\log \frac{d\P_{\phi}}{d\P_{0}}$ by sending $N\to+\infty$ and using the facts that $\|Z_{\phi,u^m,N}\|_{\R^N}^2\to \|R_\phi[u^m]\|_\spaceY^2 $ and $\innerp{Z_{f^m,N},Z_{\phi,u^m,N}}_{\R^N}\to  \innerp{f^m,R_\phi[u^m]}$ as $N\to \infty$. 
 
 In fact, the limit of \eqref{eq:RNder_exp} is the Girsanov change of measure {\rm\cite[Section 8.6]{oksendal2013_sde}} when we interpret the model as a stochastic differential equation, 
   \[
  \frac{d\P_\phi}{d\P_0} = \exp\left(- \frac{1}{2} \sum_{m=1}^M \big( \|R_{\phi}[u^m]\|_\spaceY^2  - 2\innerp{f^m,R_{\phi}[u^m]}\big) \right),
 \] 
which is closely related to the Cameron-Martin formula, e.g., {\rm \cite[Section 2.3]{da2006introduction}}.  
\end{remark}

\section{Examples}\label{sec:eg}

\subsection{Non-Gaussian noise} \label{sec:non-gaussian noise}
This section shows that the logistic distribution $\varepsilon$ on $\R^N$, which is non-Gaussian, satisfies {\rm Assumption \ref{assump:noise-X-general}}. 

\begin{example}\label{eg:logistic}
 Let $\varepsilon$ be an $\R^N$-valued random variable with i.i.d.~logistic-distributed marginal entries that have a probability density function $p(x) = \frac{e^{-x}}{(1+e^{-x})^2}$. 
Then, $\varepsilon$ satisfies {\rm Assumption \ref{assump:noise-X-general}}. 
   \end{example}
   
   \begin{proof}[Derivation for Example \ref{eg:logistic}]
   We start with the $1$-dimensional case. The logistic distribution with density $p(x) = \frac{e^{-x}}{(1+e^{-x})^2} = \frac{d}{dx}\left(\frac{1}{1+e^{-x}}\right)$ is of mean $0$ and variance $\pi^2/3$, which makes it a centered and square-integrable noise. To show \eqref{ineq:noise_low}, we consider
\[
\kl{p,p(\cdot +v)} = \int_{\R}\log\left(\frac{p(x)}{p(x+v)}\right)p(x)\,dx 
\]
for $|v|\leq 1$ and $|v|>1$ separately. When $v>1$,
\begin{align*}
    \kl{p,p(\cdot+v)} &= \int_\R \left(v+2\log\left(\frac{1+e^{-(x+v)}}{1+e^{-x}}\right)\right)\frac{e^{-x}}{(1+e^{-x})^2}\,dx\\
    &\leq \int_\R \left(v+2\log\left(\frac{1+e^{-x}}{1+e^{-x}}\right)\right)\frac{e^{-x}}{(1+e^{-x})^2}\,dx = v \leq v^2.
\end{align*}
When $v<-1$,
\begin{align*}
    \kl{p,p(\cdot+v)} &= \int_\R \left(v+2\log\left(\frac{1+e^{-(x+v)}}{1+e^{-x}}\right)\right)\frac{e^{-x}}{(1+e^{-x})^2}\,dx\\
    &\leq \int_\R \left(v+2\log\left(\frac{e^{-v}+e^{-(x+v)}}{1+e^{-x}}\right)\right)\frac{e^{-x}}{(1+e^{-x})^2}\,dx = -v \leq v^2.
\end{align*}
When $|v|\leq 1$, we claim that 
\begin{equation}\label{ineq:logistic_small_v}
    \kl{p,p(\cdot +v)} \leq \frac{25}{12}v^2.
\end{equation}
Thus, the KL divergence can be bounded by $25v^2/12$ for all $v\in\R$.

The $N$-dimensional case follows directly since the entries are i.i.d. The conditions in {\rm Assumption \ref{assump:noise-X-general}} hold because (i) the mean is zero and the covariance matrix is $(\pi^2/3) I_N$; and (ii), the joint distribution has a density $p_N(x_1,\ldots,x_N) = \Pi_{i=1}^N p(x_i)$, so for all $v\in \R^N$,       
\begin{align*}
    \kl{p_N,p_N(\cdot+v)} &=\int_{\R^N}\log\left(\prod_{i=1}^N\frac{p(x_i)}{p(x_i+v_i)}\right)\prod_{i=1}^N p(x_i)\,dx\\
    &=\int_{\R^N}\sum_{i=1}^N\log\left(\frac{p(x_i)}{p(x_i+v_i)}\right)\prod_{i=1}^N p(x_i)\,dx\\
    &= \sum_{i=1}^N\int_{\R}\log\left(\frac{p(x_i)}{p(x_i+v_i)}\right)p(x_i)\,dx_i\leq \frac{25}{12}\sum_{i=1}^N v_i^2 = \frac{25}{12}\|v\|^2.
\end{align*}

To prove Eq.\eqref{ineq:logistic_small_v} with $|v|\leq 1$, we resort to Lemma \ref{lemma:regularity} below, for which we need to verify the regularity conditions. Note that
\begin{align*}
	\frac{dp}{dx}(x)    & = \frac{-e^{-x}(1+e^{-x}) - e^{-x}\cdot(-2e^{-x})}{(1+e^{-x})^3} = \frac{e^{-2x}-e^{-x}}{(1+e^{-x})^3};\\ 
   \frac{d^2p}{dx^2}(x) & = \frac{(e^{-x}-2e^{-2x})(1+e^{-x}) - (e^{-2x}-e^{-x})(-3e^{-x})}{(1+e^{-x})^4} = \frac{e^{-x}-4e^{-2x}+e^{-3x}}{(1+e^{-x})^4}.
\end{align*}
Both $\left|\frac{dp}{dx}(x)\right|$ and $\left|\frac{d^2p}{dx^2}(x)\right|$ are continuous and of the order $e^{-|x|}$ as $x\to \pm \infty$. The same is thus true for $\sup_{|y-x|\leq 1}\left|\frac{dp}{dx}(y)\right|$ and $\sup_{|y-x|\leq 1}\left|\frac{d^2p}{dx^2}(y)\right|$. Thus, the first regularity condition holds with $v_0=1$.
 Also, the second regularity condition holds with $C_3 = 1/2$ by using $
\frac{d\log p}{dx}(x) = 1 - \frac{2}{1+e^{-x}}$  and $p(x) = \frac{d}{dx}\left(\frac{1}{1+e^{-x}}\right)$ to get 
\[
\left|\frac{d^3\log p}{dx^3}(x)\right| = 2\left|\frac{d p}{dx}(x)\right|\leq 2\left|(e^{-|x|})^2-e^{-|x|}\right|\leq \frac{1}{2}=:C_3.
\] 
Therefore, since the Fisher information is bounded by (recall that $
\frac{d\log p}{dx}(x) = -1+\frac{2e^{-x}}{1+e^{-x}}$)
\[
I(0) = \int_\R \left(-1+\frac{2e^{-x}}{1+e^{-x}}\right)^2p(x)\,dx\leq \int_\R 2^2p(x)\,dx = 4, 
\]
 Lemma \ref{lemma:regularity} implies that for $|v|\leq 1$, 
\[
\kl{p,p(\cdot+v)} \leq  \frac{1}{2} I(0)v^2 + \frac{C_3}{3!}|v|^3
\leq 2v^2 + \frac{1}{12}v^2\cdot 1 = \frac{25}{12}v^2, 
\]
which verifies Eq.\eqref{ineq:logistic_small_v}. 
   \end{proof}

The next lemma is reworded from \cite[Section 2.6]{kullback1978information}, which shows that 
\[
\kl{p_N,p_N(\cdot+v)} = \frac{1}{2}v^\top I(0)v + o(\|v\|^2) \text{ as }v\to 0.
\]
if $p_N$ is regular, where $I(v)$ is the Fisher information. This equation is used as a noise assumption in \cite[page 91]{tsybakov2008introduction}.

\begin{lemma}\label{lemma:regularity}
    Suppose $p_N:\R^N\to (0,+\infty)$ is a positive probability density function satisfying the following regularity conditions.
    \begin{itemize}
        \item $p_N$ is twice continuously differentiable at all $x\in\R^N$ and there are two Lebesgue integrable functions $F_1$ and $F_2$ such that for $1\leq i,j\leq N$, all $x\in\R^N$ and some $v_0>0$, 
        \[
        \sup_{\|y-x\|\leq v_0}\left|\frac{\partial p_N}{\partial x_i}(y)\right|\leq F_1(x),\ \sup_{\|y-x\|\leq v_0}\left|\frac{\partial^2 p_N}{\partial x_i\partial x_j}(y)\right|\leq F_2(x);
        \]
        \item Third-order directional derivatives exist for all $x\in\R^N$ and all directions, and are uniformly bounded:
               \[
        \left|\frac{d^3}{dt^3}\Big|_{t=0}\log p_N(x+tv)\right|\leq C_3 <+\infty\quad \text{ for all }x\in\R^N,\, v\in S^{N-1}.
        \]
    \end{itemize}
    Then, for $\|v\|\leq v_0$,
    \[
    \kl{p_N,p_N(\cdot+v)} \leq  \frac{1}{2}v^\top I(0)v + \frac{C_3}{3!}\|v\|^3.
    \]
    Here, $I(v) := \int_{\R^N}\nabla_v(\log(p_N(x+v)))(\nabla_v(\log(p_N(x+v))))^\top p_N(x+v)\,dx$ is the Fisher information.
\end{lemma}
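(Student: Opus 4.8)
The plan is to reduce the bound to a pointwise Taylor expansion of $t\mapsto\log p_N(x+tv)$, followed by integration against $p_N$. Fix $v$ with $\norm{v}\leq v_0$ and $x\in\R^N$. Since $p_N$ is positive and twice continuously differentiable, the map $F_x(t):=\log p_N(x+tv)$ is $C^2$ on $[0,1]$, and by the second regularity hypothesis its third derivative exists on $(0,1)$ with $|F_x'''(t)|\leq C_3\norm{v}^3$ (by homogeneity of the third directional derivative in $v$). Taylor's theorem with Lagrange remainder then yields, for every $x$,
\[
\log p_N(x+v)-\log p_N(x) = v^\top\nabla\log p_N(x) + \tfrac12\, v^\top\nabla^2\log p_N(x)\,v + R(x), \qquad |R(x)|\leq \tfrac{C_3}{6}\norm{v}^3.
\]

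Next I would integrate this identity against $p_N$, using $\kl{p_N,p_N(\cdot+v)} = -\int_{\R^N} p_N(x)\big[\log p_N(x+v)-\log p_N(x)\big]\,dx$, and identify the three resulting terms. The first-order term is $\int_{\R^N} p_N(x)\,v^\top\nabla\log p_N(x)\,dx = v^\top\int_{\R^N}\nabla p_N(x)\,dx = 0$: integrability of $\nabla p_N$ follows from $|\partial_i p_N|\leq F_1$, and $\int_{\R^N}\partial_i p_N\,dx = 0$ follows from Fubini together with the fact that an integrable function with integrable partial derivative must have vanishing limits along each coordinate axis, so the boundary terms at infinity drop. For the second-order term I would use $\nabla^2\log p_N = \nabla^2 p_N/p_N - (\nabla p_N)(\nabla p_N)^\top/p_N^2$, which gives
\[
\int_{\R^N} p_N(x)\,v^\top\nabla^2\log p_N(x)\,v\,dx = \int_{\R^N} v^\top\nabla^2 p_N(x)\,v\,dx - \int_{\R^N} p_N(x)\big(v^\top\nabla\log p_N(x)\big)^2\,dx = -\,v^\top I(0)\,v,
\]
where $\int_{\R^N}\partial_{ij}p_N\,dx = 0$ by the same boundary-vanishing argument (now with $|\partial_{ij}p_N|\leq F_2$ and $\partial_i p_N\to 0$ at infinity), and the surviving integral equals $v^\top I(0)\,v$ because the Fisher information of a location family is translation invariant, i.e.\ $I(v)\equiv I(0)$, as seen from the substitution $y=x+v$ in its definition. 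Finally $\big|\int_{\R^N} p_N(x)R(x)\,dx\big|\leq \tfrac{C_3}{6}\norm{v}^3$. Collecting the pieces,
\[
\kl{p_N,p_N(\cdot+v)} = \tfrac12\,v^\top I(0)\,v - \int_{\R^N} p_N(x)R(x)\,dx \;\leq\; \tfrac12\,v^\top I(0)\,v + \tfrac{C_3}{3!}\norm{v}^3,
\]
which is the claim.

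The main obstacle is the rigorous justification of the two ``integration at infinity'' identities $\int_{\R^N}\partial_i p_N\,dx = 0$ and $\int_{\R^N}\partial_{ij}p_N\,dx = 0$, and of the splitting of the integral into the three pieces above; this is exactly where the domination functions $F_1,F_2$ are genuinely used (and, in the original formulation of \cite{kullback1978information}, the uniformity of these bounds over a $\norm{v}\leq v_0$ ball, which is what legitimizes the alternative route of differentiating $t\mapsto\int_{\R^N} p_N(x)\log p_N(x+tv)\,dx$ under the integral sign). Everything else is routine, modulo observing that $I(0)$ is finite in the regime of interest — it is bounded in the logistic application — so that the right-hand side is meaningful; in fact the argument above also shows that if $\kl{p_N,p_N(\cdot+v)}=\infty$ then necessarily $v^\top I(0)v=\infty$, so the inequality holds unconditionally.
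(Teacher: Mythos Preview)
Your argument is correct and is precisely the standard Taylor-expansion route: expand $t\mapsto\log p_N(x+tv)$ to second order with Lagrange remainder, integrate against $p_N$, use the integrability bounds $F_1,F_2$ to kill the boundary terms in $\int\partial_i p_N=0$ and $\int\partial_{ij}p_N=0$, and recognize the surviving quadratic term as $v^\top I(0)v$. The paper does not actually prove this lemma; it only states it as a rewording of \cite[Section~2.6]{kullback1978information}, and your proof is exactly the argument that reference carries out, so there is nothing to contrast.
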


\subsection{Proof of Proposition \ref{prop:compactLG} and Derviations for Examples \ref{example:nonlocal_opt}--\ref{example:Aggr_opt}}\label{sec:comop}
This section uses Proposition \ref{prop:compactLG} to show operators in Examples \ref{example:nonlocal_opt} and \ref{example:Aggr_opt} are compact.

\begin{proof}[Proof of Proposition \ref{prop:compactLG}] 
Since $\langle\LGbar\phi,\psi \rangle_{L_\rho^2} 
  = \E [\langle R_\phi[u], R_\psi[u]\rangle_{\spaceY} ]$ for all $\phi,\psi\in L^2_\rho$, we have 
\begin{align*}
    \langle\LGbar\phi,\psi \rangle_{L_\rho^2} 
  & = \E\left[\int_\calX\left(\int_\calS \phi(s)g[u](x,s)ds\right)\left(\int_\calS \psi(s)g[u](x,s)ds\right)\nu(dx)\right]\\
  &= \int_{\calS\times\calS}\phi(s)\psi(s')\E\left[\int_\calX g[u](x,s)g[u](x,s')\nu(dx)\right]ds\, ds'\\
  &= \int_{\calS\times\calS}\phi(s)\psi(s') \overline{G}(s,s') \dot{\rho}(s)\dot{\rho}(s') ds\, ds',
\end{align*}
where the integrand $\overline{G}(s,s')$ is  
\[
 \overline{G}(s,s') := \frac{G(s,s')}{\dot{\rho}(s)\dot{\rho}(s')}\,  \text{ with }  G(s,s'): =\E\left[\int_\calX g[u](x,s)g[u](x,s')\nu(dx)\right]. 
\]
The operator $\LGbar$ is self-adjoint since $\overline{G}(s,s')$ is symmetric, and it is nonnegative by definition since $\innerp{\LGbar\phi,\phi}=\E[\|R_\phi[u]\|^2_\spaceY]\geq 0$ for all $\phi\in L^2_\rho$. Thus, we only need to show that $\overline{G}\in L^2(\rho\otimes\rho)$, which implies that $\LGbar$ is compact. By Cauchy-Schwartz inequality, we have  
\begin{align*}
  G^2(s,s') =  &\ \left(\E\left[\int_\calX g[u](x,s)g[u](x,s')\nu(dx)\right]\right)^2   \\
    \leq &\  \E\left[\int_\calX g^2[u](x,s)\nu(dx)\right] \E\left[\int_\calX g^2[u](x,s')\nu(dx)\right] = Z^2\dot{\rho}(s)\dot{\rho}(s')
\end{align*}
with $Z= \int_\calS \E\left[\int_\calX | g[u](x,s)|^2 \nu(dx)\right]ds$. 
Then, 
$$   \int_{\calS\times\calS}\overline{G}^2(s,s')\dot{\rho}(s)\dot{\rho}(s') ds\, ds' 
     \leq \int_{\calS\times\calS} Z^2 \, ds\, ds'  =Z^2{\rm{vol}}^2(\calS)<+\infty,
$$ that is, $\overline{G}\in L^2(\rho\otimes\rho)$.  
\end{proof}

\begin{proof}[Derivation for Example \ref{example:nonlocal_opt}] 
Recall that the input functions are 
$ u(x)=\sum_{k=1}^\infty X_k \cos(2\pi k x)$,  
 where $\{X_k\}_{k=1}^\infty$ is a sequence of independent $\calN(0,\sigma_k^2)$ random variables with $\sum_{k=1}^\infty \sigma_k^2<+\infty$.
 Then,  using 
	 $\cos\bigl(2\pi k\,(x+s)\bigr) + \cos\bigl(2\pi k\,(x-s)\bigr)=  2 \cos\bigl(2\pi k x \bigr) \cos(2\pi k s)$, we have 
\begin{align*}
    g[u](x,s) & = u(x+s)+u(x-s)-2u(x) = 2\sum_{k=1}^\infty X_k \,\cos(2\pi  k \,x)\bigl(\cos(2\pi k \,s)-1\bigr).   
\end{align*}
Since $X_k$'s are independent with $\E[X_k X_j]=\sigma_k^2 \delta_{k,j}$, we have 
\[\E\Bigl[\,g[u](x,s)\,g[u](x,s')\Bigr]
=4 \sum_{k=1}^\infty \sigma_k^2 \,\bigl(\cos(2\pi k \,s)-1\bigr) \bigl(\cos(2\pi k \,s')-1\bigr)\,\cos^2(2\pi k\,x).
\]
Then, integrating in $x$ with the fact that $\int_0^1 \cos^2(2\pi k\,x) dx=\frac{1}{2}$, we obtain 
\[
\begin{aligned}
G(s,s') =  \int_0^1 \E\Bigl[\,g[u](x,s)\,g[u](x,s')\Bigr]\, dx =2\sum_{k=1}^\infty \sigma_k^2 \,\bigl(\cos(2\pi k \,s)-1\bigr) \bigl(\cos(2\pi k \,s')-1\bigr)\,.
\end{aligned}
\]
The series converges uniformly since $\sum_{k=1}^\infty \sigma_k^2<+\infty$, and $G$ is a continuous function $\calS\times \calS$. Then, $Z= \int_\calS G(s,s)ds<+\infty$, the exploration measure has density $\dot{\rho} = \frac{1}{Z}G(s,s)= \frac{1}{Z} 2\sum_{k=1}^\infty \sigma_k^2 \,\bigl(\cos(2\pi k \,s)-1\bigr)^2$. As a result, Proposition \ref{prop:compactLG} implies that the normal operator $\LGbar:L^2_\rho\to L^2_\rho$ is compact. 
\end{proof}

\begin{proof}[Derivation for Example \ref{example:Aggr_opt}] 
First, we show that $u(\cdot,\omega)$ is a random probability density function. 
Then, $u(x,\omega)\in (0,2)$
since $     \biggl|\sum_{n=1}^\infty a_n\zeta_n\cos(2\pi n x)\biggr|   \;\le\;\sum_{n=1}^\infty a_n<1$, 
and $\displaystyle\int_0^1u(x,\omega)\,dx = 1$ since each $\cos(2\pi n x)$ integrates to zero over $[0,1]$. 
Thus, $u(\cdot,\omega)$ is a probability density for each $\omega$. 

Next, we compute $ g[u](x,s)$ and show that 
\begin{equation}\label{eq:gu_double_series}
	g[u](x,s)
=\sum_{n=1}^\infty [\alpha_n(x,s) \zeta_n  + h_n(x,s)] + \sum_{n\neq m}\zeta_n\zeta_m R_{nm}(x,s), 
\end{equation}
where the deterministic functions $\alpha_n$ and $h_n$'s are   
$$
\alpha_n(x,s)= -4\pi n\,a_n \cos(2\pi n x)\sin(2\pi n s), \quad
h_n(x,s)
=-4\pi n a_n^2 \sin(2\pi n s) \cos(4\pi n x), 
$$
and $R_{mn}$'s are deterministic functions collecting off-diagonal terms. 

Recall that $ g[u](x,s)
=\bigl[u'(x+s)-u'(x-s)\bigr]\,u(x)
\;+\;\bigl[u(x+s)-u(x-s)\bigr]\,u'(x)
$ and 
$$
u(x) =1+\sum_n a_n\zeta_n\cos(2\pi n x),
\qquad
u'(x) =-\sum_n a_n\zeta_n\,(2\pi n)\,\sin(2\pi n x). 
$$
We have 
   $$
   \begin{aligned}
   u'(x+s)-u'(x-s)
   &= \sum_n -a_n\,(2\pi n)\,\zeta_n\bigl[\sin(2\pi n(x+s)) - \sin(2\pi n(x-s)) \bigr]\\
   &= \sum_n -a_n\,(2\pi n)\,\zeta_n\bigl[2\cos(2\pi n x)\,\sin(2\pi n s)\bigr]=  \sum_{n}\alpha_n(x,s)\,\zeta_n, \\
   u(x+s)-u(x-s)
   &= \sum_n a_n\,\zeta_n\bigl[\cos(2\pi n(x+s))-\cos(2\pi n(x-s))\bigr]\\
   &= \sum_n -a_n\,\zeta_n\bigl[2\sin(2\pi n x)\,\sin(2\pi n s)\bigr] 
    = \sum_{n}\beta_n(x,s)\,\zeta_n,
   \end{aligned}
   $$
 where $\displaystyle\alpha_n(x,s)=-4\pi n\,a_n \cos(2\pi n x)\sin(2\pi n s)$ and $\displaystyle\beta_n(x,s)=-2\,a_n\sin(2\pi n x)\sin(2\pi n s)$.

Then, we have  
\begin{align*}
g[u](x,s) & = \bigl[\sum_{n}\alpha_n\zeta_n\bigr]\bigl[1+\sum_{m}a_m\zeta_m\cos(2\pi m x)\bigr] +  
\bigl[\sum_{n}\beta_n\zeta_n\bigr]\bigl[-\sum_{m}a_m(2\pi m)\zeta_m\sin(2\pi m x)\bigr] \\
& =\sum_{n}\alpha_n\zeta_n
 + \, \sum_{n,m}\zeta_n\,\zeta_m\,
   \Bigl[\alpha_n\,a_m\cos(2\pi m x)\;-\;\beta_n\,a_m(2\pi m)\sin(2\pi m x)\Bigr].
\end{align*}

Splitting the double sum into the diagonal and off-diagonal terms, we write
\begin{align*}
g[u](x,s)
&=\sum_{n}
   \Bigl[ \alpha_n(x,s) \zeta_n 
    + h_n(x,s) + \sum_{m\neq n}\zeta_n\zeta_m R_{nm}(x,s)\Bigr], 
\end{align*}
where the diagonal term $h_n(x,s)$ and the off-diagonal term $R_{nm}$ with $m\neq n$ are
$$
\begin{aligned}
h_n(x,s)
&=\alpha_n(x,s)\,a_n\cos(2\pi n x)  - \beta_n(x,s)\,a_n(2\pi n)\sin(2\pi n x) \\
&=-4\pi n a_n^2 \sin(2\pi n s)[\cos^2(2\pi n x) - \sin^2 (2\pi n x)]  \\
&= -4\pi n a_n^2 \sin(2\pi n s) \cos(4\pi n x)\,, \\
R_{nm}(x,s)
&=\alpha_n(x,s)\,a_m\cos(2\pi m x)  - \beta_n(x,s)\, 2\pi m a_m \sin(2\pi m x) \\
&=  - 4\pi  a_n a_m \sin(2\pi ns) [ n\cos(2\pi n x)  \cos(2\pi m x) - m \sin(2\pi n x)  \sin(2\pi m x)]. 
\end{aligned}
$$

Lastly, we compute $G(s,s')=\int_0^1\E\bigl[g[u](x,s)\,g[u](x,s')\bigr]\,dx$ using \eqref{eq:gu_double_series}. The only nonzero contributions in the expectations $\E[\zeta_n\zeta_m\zeta_{n'}\zeta_{m'}]$ come from those terms with $(n',m')= (n,m)$ or $(n',m')= (m,n)$ since $\zeta_n$ are i.i.d.~Rademacher and  $\zeta_n^2=\zeta_n^2\zeta_m^2=1$, $\E[\zeta_n]=0$. We end up with
\begin{align*}
    G(s,s')
=\sum_{n=1}^\infty
  \int_0^1 \bigg[\alpha_n(x,s)\,\alpha_n(x,s')&+ h_n(x,s)\,h_n(x,s')\\
  &\quad+ \sum_{m\neq n}R_{mn}(x,s)\big(R_{mn}(x,s')+ R_{nm}(x,s')\big)\bigg]\,dx. 
\end{align*}
The $n$-th $\alpha$-term and the $n$-th $h$-term are  
$$
\begin{aligned}
\int_0^1 \alpha_n(x,s)\,\alpha_n(x,s')\,dx
&=\bigl[4\pi n a_n\bigr]^2\,\sin(2\pi n s)\,\sin(2\pi n s')\,
 \int_0^1\cos^2(2\pi n x)\,dx\\
&=8\pi^2 n^2\,a_n^2\;\sin(2\pi n s)\,\sin(2\pi n s'),  \\
\int_0^1 h_n(x,s)\,h_n(x,s')\,dx
&=\bigl[4\pi n\,a_n^2\bigr]^2\,\sin(2\pi n s)\,\sin(2\pi n s')\,
 \int_0^1\cos^2(4\pi n x)\,dx\\
&=8\pi^2 n^2\,a_n^4\;\sin(2\pi n s)\,\sin(2\pi n s'). 
\end{aligned}
$$
To compute $\int_0^1 \sum_{m\neq n}R_{mn}(x,s) R_{mn}(x,s') dx$, we denote
$
C_{nm}(x)=\cos(2\pi n x)\cos(2\pi m x),
$
$S_{nm}(x)=\sin(2\pi n x)\sin(2\pi m x),$ 
and write $R_{n m}(x,s)= -4\pi  a_n a_m \sin(2\pi ns) \bigl[n\,C_{nm}(x)-m\,S_{nm}(x)\bigr]$. 
Thus, 
$$
\int_0^1R_{n m}(x,s)\,R_{n m}(x,s')\,dx
=(4\pi\,a_n a_m)^2 \,\sin(2\pi n s)\,\sin(2\pi n s') \int_0^1 \bigl[n\,C_{nm}(x)-m\,S_{nm}(x)\bigr]^2\,dx.
$$
But for integers $n\neq m$, 
$\int_0^1C_{nm}(x)^2\,dx
=\int_0^1S_{nm}(x)^2\,dx
=\frac14$, and $\int_0^1C_{nm}(x)\,S_{nm}(x)\,dx=0$, so
$$
\int_0^1 \bigl[n\,C_{nm}(x)-m\,S_{nm}(x)\bigr]^2 \,dx =\frac{n^2+m^2}{4}.
$$
Putting it all together, the total off-diagonal contribution for mode $n$ is
$$
\int_0^1\sum_{m\neq n}R_{n m}(x,s)\,R_{n m}(x,s')\,dx
=4\pi^2\,a_n^2\,\sin(2\pi n s)\,\sin(2\pi n s')\;
\sum_{m\neq n}a_m^2\,(n^2+m^2).
$$
Likewise,
$$
\int_0^1\sum_{m\neq n}R_{n m}(x,s)\,R_{m n}(x,s')\,dx
=8\pi^2\,a_n^2\,\sin(2\pi n s)\;
\sum_{m\neq n}a_m^2\,\sin(2\pi m s')\,nm.
$$

Summing over $n$ gives
$$
\begin{aligned}
G(s,s')
=\sum_{n=1}^\infty 
   \bigg\{& \Bigl[8\pi^2 n^2 a_n^2 + 8\pi^2 n^2 a_n^4 +4\pi^2\,a_n^2 \sum_{m\neq n}a_m^2\,(n^2+m^2) \Bigr]\, \sin(2\pi n s)\,\sin(2\pi n s') \\
&\qquad +8\pi^2 \sum_{m\neq n} a_n^2 a_m^2mn\sin(2\pi ns)\sin(2\pi ms')\bigg\}.
\end{aligned}
$$
The series converges absolutely since $\sum_{n\geq 1} n a_n<1$, which implies that $\sum_{n\geq 1} n^2 a_n^2\leq C \sum_{n\geq 1} n a_n \leq C$ with $C= \sup_n ( na_n ) <+\infty$ . 	
\end{proof}

 \bibliographystyle{plain}
\bibliography{ref_FeiLU2025_1,ref_IPS_stochastic,ref_minimax,ref_regularization24_12,ref_kernel_methods25_1,ref_nonlocal_kernel25_02}  
\end{document}